\numberwithin{equation}{section}
\newtheorem{theorem}[equation]{Theorem}
\newtheorem{lemma}[equation]{Lemma}
\newtheorem{proposition}[equation]{Proposition}
\newtheorem{definition}[equation]{Definition}
\newtheorem{corollary}[equation]{Corollary}
\theoremstyle{remark}
\newtheorem{rem}[equation]{Remark}
\newtheorem{ex}[equation]{Example}
\newtheorem{notation}[equation]{Notation}
\newtheorem{terminology}[equation]{Terminology}
\newtheorem{notation/terminology}[equation]{Notation/Terminology}
\newcommand{\Map}{\textrm{Map}}
\newcommand{\tocong}{\xymatrix@C5mm{\ar[r]^-{\cong}&}}
\newcommand{\toocong}{\xymatrix@C5mm{\ar@{=>}[r]^-{\cong}&}}
\newcommand{\Bone}{\textrm{B}(\ul{i\Omega}_M^1)}
\newcommand{\BiR}{\textrm{B}(\ul{i\R}_M)}
\newcommand{\BT}{\textrm{B}(\ul{\T}_M)}
\newcommand{\bp}{\boxplus}
\renewcommand{\L}{\mathcal{L}}
\newcommand{\R}{\mathbb{R}}
\newcommand{\cx}{\check{\xi}}
\newcommand{\ce}{\check{\eta}}
\newcommand{\Z}{\mathbb{Z}}
\newcommand{\Aut}{\textrm{Aut}}
\newcommand{\T}{\mathbb{T}}
\newcommand{\Hom}{\textrm{Hom}}
\newcommand{\A}{\mathcal{A}}
\newcommand{\g}{\mathfrak{g}}
\newcommand{\F}{\mathcal{F}}
\newcommand{\tx}{\hat{\xi}}
\newcommand{\te}{\hat{\eta}}
\newcommand{\CC}{\mathcal{G}}
\newcommand{\tor}{\textbf{Tor}}
\newcommand{\ctor}{\textbf{\v{T}or}}
\newcommand{\ul}{\underline}
\renewcommand{\F}{\mathcal{F}}
\title{Infinitesimal Symmetries of Dixmier-Douady Gerbes}
\author{Braxton Collier}
\begin{document}

\maketitle 
\abstract{We introduce the infinitesimal symmetries of Dixmier-Douady gerbes over a manifold $M$, both with and without connective structures and curvings.  We explore the algebraic structure possessed by these symmetries, and relate them to equivariant gerbes via a ``differentiation functor".  In the case that a gerbe $\CC$ is equipped with a connective structure $\A$, we give a new construction of the Courant algebroid associated to $(\CC,\A)$ in terms of the infinitesimal symmetries of $(\CC,\A)$.}

 \section{\label{intro}Introduction}
 %$a\xymatrix@C5mm{\ar[r]^-{\cong}&}b$
Dixmier-Douady gerbes over a manifold $M$ are geometric representatives of elements in the third integral cohomology of $M$: equivalence classes of gerbes are in one-to-one correspondence with elements of $H^3(M,\Z)$ \cite{Br1}.  One advantage of working with the gerbes themselves, as opposed to their equivalence classes, is that geometric objects have symmetries.  In this paper we explore the infinitesimal symmetries of gerbes.  We investigate the structures that the collection of these symmetries possesses, and how they relate to the geometry of $M$.

Various models of symmetries of gerbes exist in the literature.  On the one hand, various models of equivariant gerbes (i.e. gerbes with families of non-infinitesimal symmetries) have been developed by different authors \cite{Br1}\cite{Br2}\cite{G1}\cite{Mein}.  On the other hand, as explained in \cite{H}, starting from a gerbe $\CC$ with connective structure $\A$ over $M$, one can construct an exact Courant algebroid $E_{(\CC,\A)}$ over $M$.   As pointed out by a number of authors \cite{BCh}\cite{Gu}\cite{R2}, this construction is analogous to the construction of the Atiyah algebroid associated to a principal circle bundle $P\to M$.  As discussed below, sections of the Atiyah algebroid are infinitesimal symmetries of $P$, so by analogy it has been suggested that $E_{(\CC,\A)}$ should encode the infinitesimal symmetries of the gerbe with connective structure. Furthermore, it follows from the work of \cite{RW} that any Courant algebroid  canonically determines a 2-term $L_{\infty}$-algebra \cite{R1}; this is the type of algebraic structure one might expect to describe the symmetries of a gerbe.  The authors of \cite{BCh} point out that this analogy between the Atiyah algebroid and the Courant algebroid $E_{(\CC,\A)}$ is somewhat formal, however, and they remark that it would be desirable to have a more precise understanding of the sense in which the Courant algebroid is related to the symmetries of the gerbe.  In this paper we establish such a direct connection, giving a geometric construction of the Courant algebroid in terms of the infinitesimal symmetries of the gerbe with connective structure.
 
To understand the types of symmetries in which we are interested, consider by analogy a principal circle bundle $P$ over $M$; such a bundle represents an element of $H^2(M,\Z)$.  A symmetry of $P$ is a diffeomorphism $\Phi:P\to P$ commuting with the action of the circle group $\T$ on $P$.  Such a diffeomorphism necessarily covers a diffeomorphism $\varphi$ of $M$.   The group of such symmetries fits into a group extension 
\begin{equation}\label{group extension} \xymatrix{ 1 \ar[r] & C^{\infty}_M(\T) \ar[r] & \textrm{Diff}^{\T}(P) \ar[r] & \textrm{Diff}_{P}(M)\ar[r] & 1}, \end{equation}
where $\textrm{Diff}_{P}(M)$ is the subgroup of the diffeomorphisms of $M$ which preserve the isomorphism class of $P$. The infinitesimal version of this extension  
\begin{equation} \label{inf extension} \xymatrix{ 0 \ar[r] & C^{\infty}_M(i\R) \ar[r] & C^{\infty}(TP)^{\T} \ar[r] & C^{\infty}(TM) \ar[r] & 0}\end{equation} is obtained by replacing $\textrm{Diff}_P(M)$ and $\textrm{Diff}^{\T}(P)$ by the Lie algebras of vector fields on $M$ and $\T$-invariant vector fields on $P$, respectively; here we identify $C^{\infty}_M(i\R)$ with the set of $\T$-invariant vertical vector fields on $P$ (with trivial Lie algebra structure).    Note also that vector fields can be constructed in terms of local data, and the extension (\ref{inf extension}) can be refined to an extension of sheaves over $M$.  These sheaves may be viewed as sections of the Atiyah sequence of vector bundles \begin{equation}\label{atiyah algebroid} \xymatrix{ 0 \ar[r] & M\times i\R\ \ar[r] & TP/\T\ar[r] &  TM \ar[r] & 0.}\end{equation}  In particular, for each (global) vector field $\xi$ on $M$, we have a principal $i\R$-bundle whose sections are local ``lifts" of $\xi$ to $P$.

Similarly, given a gerbe $\CC$ over $M$, in this paper we consider symmetries of $\CC$ that cover symmetries of $M$.  For example,  given a diffeomorphism $\varphi$ of $M$, a symmetry of $\CC$ lifting $\varphi$ is an isomorphism of gerbes 
\begin{equation} \hat{\varphi}:\CC\tocong \varphi^*\CC,\end{equation} where $\varphi^*\CC$ is the pull-back of $\CC$ via $\varphi$.  The collection of all such lifts comprise the objects of a category $\L_{\CC}(\varphi)$.  For example, the category $\L_{\CC}(\textrm{id}_M)$ of symmetries of $\CC$ covering the identity on $M$ is naturally isomorphic to the category $\textrm{Bund}_M(\T)$ of principal circle bundles over $M$. 
 If we let $\textrm{Sym}(\CC)$ denote the disjoint union of the categories $\L_{\CC}(\varphi)$ for $\varphi$ ranging over all $\varphi\in \textrm{Diff}(M)$, then both $\textrm{Sym}(\CC)$ and $\textrm{Bund}_M(\T)$ have the structure of coherent 2-groups \cite{BL}; for example multiplication in $\textrm{Bund}_M(\T)$ is given by tensor product, with the trivial bundle $M\times \T$ acting as the identity.  Moreover, regarding $\textrm{Diff}(M)$ as a category with only identity morphisms, it too has the structure of a coherent 2-group, and we obtain a sequence of homomorphisms of 2-groups \begin{equation} \label{category seq}\xymatrix{1\ar[r] &  \textrm{Bund}_M(\T)\ar[r] & \textrm{Sym}(\CC) \ar[r] & \textrm{Diff}(M) \ar[r]& 1,} \end{equation} which is ``exact" in an appropriate sense, here $1$ denotes the category with one object and one morphism.

The first new structure introduced in this paper is an infinitesimal version of the category $\textrm{Sym}(\CC)$, which we call $\L_{\CC}$.   More generally, we define a sheaf of groupoids $\ul{\L}_{\CC}$ over $M$ of infinitesimal symmetries of $\CC$ whose category of global sections is $\L_{\CC}$.   This sheaf, which is analogous to the vector bundle $TP/\T$ in the circle bundle case, is equipped with a surjective projection functor $ \pi:\ul{\L}_{\CC}\to \ul{TM}$, where the sheaf $\ul{TM}$ is considered as a sheaf of groupoids with only identity morphisms.  Furthermore, for each vector field $
\xi$ on $M$, the subsheaf $\ul{\L}_{\CC}(\xi)$ of symmetries of $\CC$ projecting to (restrictions of) $\xi$ is a gerbe with band $i\R$;  in the case $\xi=0$ this gerbe is naturally isomorphic to the trivial gerbe $\textrm{B}(\ul{i\R}_M)$ of principal $i\R$-bundles over $M$.  Although we do not develop this definition in detail, both $\L_{\CC}$ and $\textrm{B}(\ul{i\R}_M)$ should be regarded as ``sheaves of coherent Lie 2-algebras," and we have a sequence
\begin{equation} \xymatrix{\textrm{B}(\ul{i\R}_M) \ar[r] & \ul{\L}_{\CC} \ar[r] & \ul{TM}.}\end{equation}   %For simplicity, we focus on the category $\L_{\CC}$ of global sections of $\ul{\L}_{\CC}$. % For each (global) vector field $\xi$ on $M$, we prove that the subcategory $\L_{\CC}(\xi)=\pi^{-1}(\xi)$ (which we call ``lifts" of $\xi$ to $\CC$) is  isomorphic to the category of principal $i\R$-bundles on $M$; this isomorphism is canonical in the case $\xi=0$.   

In the case of a circle bundle $P\to M$, one way of relating non-infinitesimal symmetries to infinitesimal ones is via differentiation.  Given a 1-parameter subgroup $t\mapsto \varphi_t$ of $\textrm{Diff}(M)$, we may differentiate at $t=0$ to obtain a vector field $\xi$ on $M$.  If $\{\hat{\varphi}_t\}$ is a 1-parameter subgroup of $\textrm{Diff}^{\T}(P)$ covering $\{\varphi_t\}$, then differentiating at $t=0$ gives a $\T$-invariant vector field $\hat{\xi}$ on $P$ covering $\xi$.  In the case of a gerbe $\CC\to M$, there is category $\L_{\CC}(\{\varphi_t\})$ of lifts of $\{\varphi_t\}$ to $\CC$.  We construct a ``differentiation" functor 
\begin{equation} D:\L_{\CC}(\{\varphi_t\})\to \L_{\CC}(\xi),\end{equation} where $\L_{\CC}(\xi)$ is the category of global sections of $\ul{\L}_{\CC}$ lifting $\xi$. Furthermore, we prove that,  restricted to a suitably small open set on $M$ and suitably small $t$,  $D$ gives an equivalence of categories.  This is analogous to the statement in differential geometry that every vector field on a manifold can locally be integrated to a unique flow.  It justifies our assertion that $\L_{\CC}$ encodes the infinitesimal symmetries of $\CC$.

%Here the kernel of the projection $\chi^{\T}(P)\to \chi(M)$ are the vertical vector fields, which can be identified with (purely imaginary) functions on $M$.  

  %Furthermore, it was shown in \cite{} that every such algebroid gives rise to a 2-term $L_{\infty}$-algebra, or equivalently to a Lie 2-algebra; this is the type of algebraic object one might expect to describe the symmetries of a gerbe.  For example, in \cite{} it was shown how the \emph{string Lie 2-algebra} $\mathfrak{g}_k$ (for integral values of $k$) are related to the geometry of gerbes.  

  One advantage of our approach is that it elucidates the role played by connective structures and curvings on $\CC$, which together generalize the notion of a connection on a principal $\T$-bundle.  Note in particular that the construction of the Courant algebroid $E_{(\CC,\A)}$ depends upon the choice of a connective structure, whereas the construction of the Atiyah algebroid (\ref{atiyah algebroid}) makes no reference to a connection on $P$.  Connective structures and curvings enter our picture in two distinct, but related, ways.  First, recall that a connection $\Theta$ on $P$ is equivalent to a splitting of the Atiyah sequence (\ref{atiyah algebroid}) of vector bundles. Furthermore, at the level of global sections, each term in the extension (\ref{inf extension}) has the structure of a Lie algebra with respect to the bracket operation (vertical vector fields commute so that bracket on $C^{\infty}_M(i\R)$ is trivial).  $\Theta$  produces a \emph{linear} splitting of the exact sequence (\ref{inf extension}), and the curvature of $\Theta$ measures the failure of this splitting to be a splitting of Lie algebras.  
  
  Similarly, in \S\ref{operations} we construct  operations on $\L_{\CC}$ analogous to those possessed by a Lie algebra.  This structure is most concretely described in a Cech-type picture relative to a collection of local trivializations for $\CC$, and we therefore introduce a Cech version $\L_{g_{ijk}}$ of the category of infinitesimal symmetries.   The category $\L_{g_{ijk}}$ then obtains the structure of a Lie 2-algebra, or alternatively a 2-term $L_{\infty}$-algebra \cite{BC}.  We can also regard $C^{\infty}(TM)$ as a Lie 2-algebra, and from this point of view we have a strict extension of Lie 2-algebras (as defined in \cite{R2})
\begin{equation}\label{Lie 2-alg ext} \xymatrix{ \L_{g_{ijk}}(0) \ar[r] & \L_{g_{ijk}} \ar[r] & C^{\infty}(TM),}\end{equation} where $\L_{g_{ijk}}(0)$ is the set of lifts of the zero vector field in the Cech picture. As we explain in \S\ref{conn str and curving}, a connective structure on $\CC$ determines a linear splitting  of the extension (\ref{Lie 2-alg ext}), sending each vector field $\xi$ on $M$ to its ``horizontal lift" $\tx^h$.  Given a pair of vector fields $\xi,\eta\in C^{\infty}(TM)$, without additional structure there is no natural way to compare $[\tx^h,\te^h]$ to $[\tx,\te]^h$.  We then observe that such data is essentially equivalent to a curving $K$ for $\A$.  The curvature 3-form of $K$ (or more precisely its de Rham cohomology class) acts as an obstruction to splitting the sequence (\ref{Lie 2-alg ext}) on the level of Lie 2-algebras.

One can also consider the collection of gerbes with connective structure over $M$ as a 2-category in its own right.  In particular, given a gerbe $\CC$ with connective structure $\A$ we next study the infinitesimal symmetries of $\CC$ which take $\A$ into account.  These form a sheaf of categories $\ul{\L}_{(\CC,\A)}$, which may also be considered an analogue of $TP/\T$ in the circle bundle case.  If we denote by $\check{\textrm{B}}(\ul{i\R}_M)$ the (stack) of principal $i\R$ bundles over $M$ with connection, we have a sequence
\begin{equation} \xymatrix{ \check{\textrm{B}}(\ul{i\R}_M) \ar[r] & \ul{\L}_{(\CC,\A)} \ar[r] & \ul{TM}. }\end{equation} We denote the category of global sections by $\L_{(\CC,\A)}$ and refer to its objects as \emph{connective symmetries}.     There is a forgetful functor 
\begin{equation} \L_{(\CC,\A)}\to \L_{\CC},\end{equation} and we prove that the set of connective symmetries extending a given element of $\L_{\CC}$ is a torsor for global 1-forms on $M$.  We extend most of the results and constructions given earlier to the connective case.  In particular there is a Cech model $\L_{(g_{ijk},A_{ij})}$ which is a Lie 2-algebra.
     
In the last section we give a construction of the Courant algebroid $E_{(\CC,\A)}$ in terms of the infinitesimal connective symmetries of $(\CC,\A)$, and compare this construction to that given in \cite{H}. The construction (at the level of global sections) is roughly as follows.  As mentioned above, there is a forgetful functor $\pi:\L_{(\CC,\A)}\to \L_{\CC}$ such that, for each $\tx\in \L_{\CC}$ the set of objects $\pi^{-1}(\tx)\subset \L_{(\CC,\A)}$ exending $\tx$ is a torsor for 1-forms on $M$.  On the other hand, for each vector field $\xi$ on $M$ the connective structure $\A$ determines a horizontal lift 
\begin{equation} \tx^h\in \L_{\CC},\end{equation}  and thus for each vector field $\xi$ we may define 
\begin{equation} E_{\tx}=\pi^{-1}(\tx^h) \subset \L_{(\CC,\A)}(\xi). \end{equation} By the above, the set  $E_{\tx}$ form a torsor for 1-forms on $M$. Taking the disjoint union of the sets $E_{\xi}$ as $\xi$ ranges over all vector fields on $M$,  we obtain a $C^{\infty}_M$-module\footnote{we could easily generalize our methods to construct a sheaf of modules} $E_{(\CC,\A)}$ fitting into an exact sequence of $C^{\infty}_M$-modules
\begin{equation} \xymatrix{ 0 \ar[r] & C^{\infty}(T^*M) \ar[r] & E_{(\CC,\A)} \ar[r] & C^{\infty}(TM) \ar[r] & 0.}\end{equation}

Given a pair of vector fields $\xi,\eta$, and a pair of connective lifts $\cx,\ce\in E$ extending $\tx^h,\te^h$, their bracket $[\cx,\ce]$ in the category $\L_{(\CC,\A)}$ is \emph{not} itself a connective extension of $[\xi,\eta]^h$, i.e. is not an element of $E_{[\xi,\eta]}$.  On the other hand, $[\cx,\ce]$ is \emph{naturally isomorphic} to an element $[\cx,\ce]_{E}\in E_{[\xi,\eta]}$, where the bracket $[\cdot,\cdot]_{E}$ corresponds to the Courant bracket.  We also construct a non-degenerate symmetric bilinear pairing 
\begin{equation} \langle\cdot,\cdot\rangle: E\times E \to C^{\infty}_M \end{equation} which is suitably compatible with the bracket $[\cdot,\cdot]_E$ and the projection to $C^{\infty}(TM)$.  

The construction of the Courant algebroid in \cite{H} is given in terms of Cech data $\{g_{ijk},A_{ij}\}$ for $(\CC,\A)$.  We explain how this Courant algebroid, which we denote $E_{(g_{ijk},A_{ij})}$, is related to the $C^{\infty}_M$-module $E_{(\CC,\A)}$ described above. One consequence of this analysis is that we obtain an isomorphism of  Lie 2-algebras \begin{equation}\label{Lie 2-algebra iso} L_{E_{(g_{ijk},A_{ij})}}  \tocong\L_{(g_{ijk},A_{ij})},  \end{equation} where $L_{E_{(g_{ijk},A_{ij)}}}$ is the 2-term $L_{\infty}$-algebra constructed from $E_{(g_{ijk},A_{ij})}$ using the results of \cite{RW} and \cite{R1}.
Since the proof that $\L_{(g_{ijk},A_{ij})}$ is an $L_{\infty}$-algebra follows almost immediately from its construction, the isomorphism (\ref{Lie 2-algebra iso}) gives some intuition as to the  origin of the $L_{\infty}$-structure on $L_{E_{(\CC,\A)}}$.

The outline of the paper is as follows.  In \S\ref{VF} we review the geometry of vector fields on principal $\T$-bundles; this material serves as a useful point of reference for the remainder of the paper.  In \S\ref{torsors} we explain some constructions involving torsors for sheaves of abelian groups.  In \S\ref{gerbes} we review basic definitions and results involving gerbes, in particular emphasizing maps between gerbes with different band, as well as the the 2-categorical structure possessed by the collection of gerbes over a fixed manifold $M$.  In \S\ref{section gerbe symmetries} we introduce our definition of the category of infinitesimal symmetries of a gerbe $\CC$ over $M$, motivated by the circle bundle case and some ideas from algebraic geometry.   In \S\ref{Cech}  we explain how infinitesimal symmetries appear in the Cech picture, taking pains to compare this approach to our initial definition.  In \S\ref{local flows} we explain the local relationship between 1-parameter symmetries of $\CC$ and infinitesimal symmetries.   In \S\ref{operations} we examine the algebraic structure of the category of infinitesimal gerbe symetries, in particular introducing operations of addition, scalar multiplication and the Lie bracket.  We use these operations to give $\L_{g_{ijk}}$ the structure of a 2-term $L_{\infty}$-algebra.  In \S\ref{conn str and curving} we introduce connective structures and curvings, and in \S\ref{conn lifts} introduce infinitesimal connective symmetries. \S\ref{local flows conn} generalizes the discussion in \S\ref{local flows} to the connective case.  Finally, in \S\ref{courant} we discuss the relationship between connective symmetries and Courant algebroids. There are two appendices which contain some of the more technical arguments.  Appendix A goes into more detail about the relationship between 1-parameter families of gerbe symmetries and infinitesimal symmetries.  In appendix B we recall the precise definitions related to the 2-category of gerbes.  We use this material to prove some results from the main text.

\begin{rem} There exist in the literature various models of gerbes.  In this paper we adopt the perspective of Brylinski \cite{Br1}, where a Dixmier-Douady gerbe is regarded as a sheaf of categories equipped with some extra structure.  We have found it to be easier to understand certain conceptual questions about the symmetries of gerbes using this model.  On the other hand, because the categorical model keeps track of so much information (isomorphic objects are never identified), constructions in this picture can become quite intricate.  As a result, we have found it useful to recast many of the main definitions and constructions in a more concrete Cech-type model for gerbes, in which a gerbe is represented by a $\T$-valued Cech 2-cocycle on $M$.  Most constructions can be done equally well in either approach, although our construction of the Lie 2-algebra structures on $\L_{\CC}$ and $\L_{(\CC,\A)}$ are done only in the Cech picture.  One could similarly phrase our ideas in terms of other models of gerbes, for example bundle gerbes or presentations of gerbes as groupoids.

\end{rem}

\section{\label{VF} Infinitesimal Symmetries of Circle Bundles}
We begin by reviewing the basic geometry of vector fields on circle bundles and their relationship to 1-parameter groups of symmetries.  In particular, we formulate the basic definitions and structures in  sheaf-theoretic terms.   The discussion in this section lays the groundwork for our discussion of infinitesimal symmetries of Dixmier-Douady gerbes, and we will continually refer back to the circle bundle case throughout the rest of the paper as a point of reference.

Let $\pi:E\to M$ be a principal $\T$-bundle over a smooth manifold $M$.  Thus $E$ is a smooth manifold with a free action of the circle group $\T$ whose orbits are the fibers of the projection map $\pi$.  For every $x\in E$, the kernel of $\pi_*:T_xE\to T_{\pi(x)}M$ is the one dimensional subspace consisting of \emph{vertical vectors}.  The $\T$-action induces a canonical isomorphism of this subspace with the Lie algebra $\textrm{Lie}(\T)$, which we identify with the $\R$-vector space $i\R$ consisting of purely imaginary complex numbers.

A smooth 1-parameter group of diffeomorphisms of $M$ is a collection of diffeomorphisms $\{\varphi_t:M\to M\}$ such that $\varphi_t'\circ\varphi_t=\varphi_{t+t'}$ for each $t,t'\in \R$, and such that the corresponding map 
\begin{equation} \Phi:M\times \R\to M\end{equation} is smooth.
\begin{definition} \label{def bundle 1-par lift}  Let $\Phi$ be a be a smooth 1-parameter group of diffeomorphisms of $M$.  A 1-parameter group of symmetries of $E$ lifting $\Phi$ is a 1-parameter group of diffeomorphism 
\begin{equation} \hat{\Phi}:E\times \R\to E\end{equation} such that for each $t\in\R$, the map $\Phi(\cdot,t)=\varphi_t$ commutes with the $\T$-action and satisfies $\varphi_t\pi=\pi\hat{\varphi_t}$.
\end{definition}
  Given such a pair $(\Phi,\hat{\Phi})$, we obtain vector fields $\xi\in C^{\infty}(TM)$ and $\hat{\xi}\in C^{\infty}(TE)$ by differentiating at $t=0$.  Definition (\ref{def bundle 1-par lift}) implies that $\tx$ is $\T$-invariant and projects to $\xi$; we call such a vector field a \emph{lift} of $\xi$ to $E$.  %Note that, because the fiber $\T$ of $E$ is compact, the theory of ordinary differential equations implies that there is a bijection between the set of lifts of $\varphi_t$ to $E$ and the set of lifts of $\xi$ to $E$.    For a direct proof of this fact, see the proof of {\color{red} add reference} in the appendix.

Let us describe a lift $\tx$ in terms of the sheaf of sections $\ul{E}$ of $E\to M$.  Let $\sigma$ be a local section of $E$. Then $\tx-\sigma_*\xi$ is a vertical vector at each point in the image of $\sigma$, and therefore the section $\sigma$ determines a local $i\R$-valued function $f_{\sigma}$ according to the formula
\begin{equation}\label{lift sheaf eq} \tx_{\sigma(x)}=\sigma_*\xi_x+f_{\sigma}(x).\end{equation}  Because $\tx$ is $\T$-invariant, it is completely determined locally by $f_{\sigma}$.  Moreover, it is easy to see that the assignment $\sigma\mapsto f_{\sigma}$ is compatible with restrictions to smaller open sets, so that we obtain a sheaf homomorphism
\begin{align} F_{\tx}: \ul{E}  \to & \ul{i\R}_M \\ \sigma  \mapsto & f_{\sigma},\notag\end{align} where $\ul{i\R}_M$ denotes the sheaf of smooth $i\R$-valued functions on $M$.  

The sheaf of sections $\ul{E}$ is naturally a torsor for the sheaf $\ul{\T}_M$ of smooth $\T$-valued functions on $M$, as discussed in detail in the next section.  Given a local section $\sigma$ of $E$ and a local $\T$-valued function $g$, equation (\ref{lift sheaf eq}) implies that
\begin{equation}\label{f dependence} F_{\tx}(\sigma\cdot g)=F_{\tx}(\sigma)-\iota_{\xi}d\log(g),\end{equation} where $d\log(g)$ denotes $g^{-1}dg$.  Conversely, given a sheaf homomorphism $\ul{E}\to \ul{i\R}_M$ satisfying (\ref{f dependence}) we obtain a unique lift of $\xi$ to $E$.

To set the stage for our discussion in \S\ref{local flows} and appendix A, let us derive (\ref{f dependence}) by considering the relationship between the vector field $\tx$ and the 1-parameter family $\hat\Phi$.  
\begin{notation} We will denote $M\times \R$ by $M^1$.  We will denote the projection
\begin{equation} \pi_1:M^1\to M\end{equation} by $p_1$ and 
\begin{equation} \Phi:M^1\to M\end{equation} by $p_0$.  Furthermore, for each open set $U$ of $M$, we define the open set  
\begin{equation} \nu(U)=\{(x,t)\in U\times\R:\varphi_t(x)\in U\}\subset M^1.\end{equation}  Note that there is a natural inclusion $U\hookrightarrow \nu^1(U)$ given by $x\mapsto (x,0)$. 
\end{notation}  For each section $\sigma$ of $E$ over $U$ define a function $g_{\sigma}:\nu(U)\to \T$ by 
\begin{equation}\label{def g lift} \hat{\varphi}_t(\sigma(x))=\sigma(\varphi_t(x))g_{\sigma}(x,t).\end{equation}  %Note that for each $x\in M$ and $t_1,t_2\in \R$, $g$ satisfies the cocycle condition 
%\begin{equation}\label{g cocycle} g_{\sigma}(t_1+t_2,x)=g_{\sigma}(t_1,x)g_{\sigma}(t_2,\varphi_t(x)).\end{equation}

\noindent The function $f_{\sigma}:U\to i\R$ is then related to $g_{\sigma}$ by   
\begin{equation}\label{diff g} f_{\sigma}(x)=\iota_{\frac{d}{dt}}d\log(g_{\sigma}(x,t))|_U.\end{equation}  Given another local section $\sigma'$, there is a unique function $h:U\to \T$ such that $\sigma'=\sigma\cdot h$.  Equation (\ref{def g lift}) then implies that 
\begin{equation} \label{g cocycle 1} g_{\sigma'}(x,t)=g_{\sigma}(x,t)h(x)h^{-1}(\varphi_t(x)).\end{equation}
Combining this with (\ref{diff g}) we obtain (\ref{f dependence}). 
%\begin{rem} \label{1-par vec correspondence} Suppose that $E$ is the trivial $\T$-bundle over $M$.   We see that the set of lifts of $\varphi_t$ to $E$ can be identified with the set  $S_{\varphi_t}$ of functions $g:M\times \R\to \T$ satisfying (\ref{def g lift}), whereas the set of lifts of $\xi$ to $E$ can be identified with the set of functions $f:M\to i\R$.  In particular, the map $\frac{d}{dt}\log|_{t=0}:S_{\varphi_t}\to C^{\infty}_M(i\R)$ is a bijection.
%\end{rem}
%It will be convenient to have a way of specifying lifts of $\xi$ to $E$ in terms of local sections.  Let $\{U_i\}$ be a good open cover of $M$, and let $\sigma_i$ be a local section of $E$ over $\{U_i\}$ for each $i$.  Then over each intersection $U_{ij}:U_i\cap U_j$, there are smooth functions such that 
%\begin{equation} s_j=s_i\cdot g_{ij}. \end{equation}  A lift of $\xi$ to $E$ is then equivalent to a collection of local functions $f_i:U_i\to i\R$ such that 
%\begin{equation} f_j-f_i=-\iota_{\xi}d\log(g_{ij})\end{equation} on $U_{ij}$.

Next, suppose $E$ has a connection $\Theta$.  We view $\Theta$ as a map $A_{\Theta}$ from the sheaf of sections of $E$ to the sheaf $\underline{i\Omega}_M^1$ of imaginary 1-forms on $M$ by defining \begin{equation}\label{A sheaf} A_{\Theta}(\sigma)= \sigma^*\Theta.\end{equation}  The behavior of $A_{\Theta}$ under gauge transformations is
\begin{equation}\label{gauge}A_{\Theta}(\sigma\cdot g)=A_{\Theta}(\sigma)+g^{-1}dg.\end{equation}
Given a vector field $\xi$, there is a unique \emph{horizontal lift} $\tx^h$ of $\xi$ to $P$, characterized by the equation \begin{equation}\label{horlift}f_{\tx^h}(\sigma)=-\iota_{\xi}A_{\Theta}(\sigma).\end{equation}  On the other hand, if we define $\tx^h$ by equation (\ref{horlift}), it is easy to see from equation (\ref{gauge})  that  $f_{\tx^h}$ satisfies equation (\ref{f dependence}).  %In terms of local sections, $\Theta$ is specified by a collection $A_i\in i\Omega^1_{U_i}$ such that $A_j-A_i=d\log(g_{ij})$, and the functions defining the horizontal lift are $f_i=-\iota_{\xi}A_i$.

%Now if $\tx$ is \textit{any} lift of $\xi$ to $P$, note that for any two sections $\sigma$, $\sigma'$ of $P$, we can write $\sigma'=\sigma\cdot g$ for some $g:U\to \T$, and we have:
%\[f_{\tx}(\sigma')-f_{\xi^h}(\sigma')=f_{\tx}(\sigma)-\iota_{\xi}(g^{-1}dg)+\iota_{\xi}(A_{\theta}+g^{-1}dg)=f_{\tx}(\sigma)-f_{\tx^h}(\sigma)\]  Thus the function $f_{\xi,\Theta}=f_{\tx}-f_{\xi^h}$ is independent of the local section $\sigma$ and therefore defines a function from $M\to \T$.  Geometrically, this function is characterized by:
%\[\pi^*f_{\xi,\Theta}=\iota_{\tx}\Theta\]

Suppose that $\hat{\xi}$ and $\hat{\eta}$ are a pair of vector fields on $E$ lifting vector fields $\xi$ and $\eta$ on $M$.  Then the bracket $[\hat{\xi},\hat{\eta}]$ is a lift of $[\xi,\eta]$ satisfying
\begin{equation}f_{[\tx,\te]}=\xi(f_{\te})-\eta(f_{\tx}).\end{equation}  In particular, if $\tx=\tx^h$ and $\te=\te^h$ are the horizontal lifts, then
\begin{align} \label{cocycle1} & f_{[\xi,\eta]^h}-f_{[\xi^h,\eta^h]}  
\\ &= \iota_{[\xi,\eta]}A_{\Theta}-\iota_{\xi}d\iota_{\eta}A_{\Theta}+\iota_{\eta}d\iota_{\xi}A_{\Theta} \notag
\\  \label{cocycle2}&= -\iota_{\eta}\iota_{\xi}K(\Theta),\notag
\end{align} where $K(\Theta)\in i\Omega^2(M)$ is  the curvature form of $\Theta$ and satisfies $K(\Theta)=dA_{\Theta}(\sigma)$ for any local section $\sigma$.   %If we write arbitrary lifts as $f_{\tx}=f_{\xi^h}+f_{\xi}$ and $f_{\te}=f_{\eta^h}+f(\eta)$, then we have:
%\[f_{[\tx,\te]}-f_{[\xi,\eta]^h}=\xi(f_{\eta})-\eta(f_\xi)-\iota_{\eta}\iota_{\xi}K(\Theta)\]

\section{\label{torsors}Torsors}

Let $\textrm{Bund}_M(\T)$ denote the category of principal $\T$-bundles over a manifold $M$.  The starting point for this section is the observation that $\textrm{Bund}_M(\T)$ possesses many structures analogous to those of an abelian group, i.e. it is a Picard category.  For example, given $E,E'\in \textrm{Bund}_M(\T)$ we can form their tensor product $E\otimes E'$; this operation is commutative and associative up to natural isomorphism, and the trivial $\T$-bundle $M\times \T$ acts as a multiplicative unit.  Furthermore, for any bundle $E$ the dual bundle $E^{\vee}$ plays the role of an inverse for $E$.  As explained below, there are also operations corresponding to taking Lie derivatives and the exterior derivative.   These examples can  be conveniently described as special cases of the \emph{associated torsor} construction.  After introducing this construction and examining its properties, we use it to rephrase some of the discussion in the last section in a way which will easily generalize to the gerbe setting.

\begin{definition}  Let $A$ be an abelian group.  
\begin{enumerate} \item An \emph{$A$-torsor} is a set $S$ with a simply transitive action of $A$ on $S$:\begin{equation} (s,a)\mapsto s+a, \mbox{ for $s\in S$ and $a\in A$.}\end{equation}
\item Given $A$-torsors $S$ and $T$, a \emph{homomorphism of $A$-torsors} is a map $f:S\to T$ such that $f(s+a)=f(s)+a$ for all $s\in S$ and $a\in A$.
\end{enumerate}
\end{definition}
\begin{notation} We will denote the category of all $A$-torsors by $\tor_A$.  We write the action of $A$ on a torsor additively unless multiplication in the group $A$ is conventionally written multiplicatively (e.g. $\T$).  Note that because $A$ is abelian we do not distinguish between left and right $A$-torsors.
\end{notation}

We now introduce the associated torsor construction.

\begin{definition} \label{def ass tor} Let $\varphi:A\to B$ be a homomorphism of abelian groups.  Given $T\in \tor_A$ let $\sim$ denote the equivalence relation on $T\times B$ given by
\begin{equation}(t +a,b)\sim (t,b+\varphi(a)) \mbox{ for all $a\in A$}.\end{equation} The \emph{associated $B$-torsor} $\varphi[T]$ is the quotient
\begin{equation} T\times B/\sim,  
 \end{equation}  with elements of $B$ acting on the second factor. 
 \end{definition}
 \begin{ex} \label{ass tor ex} Let $A=\T\times\T$, $B=\T$; because $\T$ is abelian group multiplication defines a homomorphism $\mu:\T\times\T\to \T$.  Given $S,T\in\tor_{\T}$, $S\times T$ is naturally a $\T\times \T$-torsor, and we will denote the associated torsor $\mu[S\times T]$ by $S\otimes T$.  In the case where $A$ is written addititively (e.g. $i\R$), if $\varphi:A\times A\to A$ is the addition homomorphism we will use the notation $\varphi[S\times T]:=S\bp T$. 
 
Again, because $A$ is abelian, the inverse map $a\mapsto -a$ defines a group homomorphism, and in this case the associated torsor construction takes a torsor $T$ to its dual $T^{\vee}$.  Given $S,T\in \tor_A$, we will sometimes use the notation $T\boxminus S$ to denote the $A$-torsor $\Hom(S,T)$; this can be identified with the torsor associated to $S\times T$ via the homomorphism from $A\times A\to A$ taking $(a,a')\mapsto a'-a$.

In the case that $A=i\R$ (or more generally any vector space), given $T\in \tor_{i\R}$ and $\lambda\in \R$ regarded as a homomorphism $i\R\to i\R$, we will denote the associated torsor $\lambda[T\times i\R]$ by $\lambda\odot T$.
\end{ex}

 The associated torsor construction satisfies a universal property, the proof of which is omitted.  
 \begin{lemma}\label{ass tor universal} Let $\varphi:A\to B$ be a homomorphism of abelian groups.  For each $T\in\tor_A$ there is a canonical map $\varphi[\cdot]:T \to \varphi[T]$ such that 
 \begin{equation} \varphi[t+a]=\varphi[t]+\varphi(a)\end{equation} for all $t\in T$ and $a\in A$. Furthermore, if $R$ is any $B$-torsor and $\psi:T\to R$ is any map satisfying $\psi(t+a)=\psi(t)+\varphi(a)$ for all $a\in A$, $t\in T$, then there exists a unique isomorphism of $B$-torsors $\tilde{\psi}:\varphi[T]\to R$ such that $\tilde{\psi}(\varphi[t])=\psi(t)$ for all $t\in T$.
 \end{lemma}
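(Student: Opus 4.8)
The plan is to define both the canonical map and the comparison isomorphism by the evident formulas on representatives, and then check that these descend to the quotient $T\times B/\!\sim$; throughout I write the $B$-action additively with identity $0$. First I would set $\varphi[t] := [(t,0)]$, the class of $(t,0)\in T\times B$. The compatibility $\varphi[t+a]=\varphi[t]+\varphi(a)$ is then immediate from the defining relation of Definition \ref{def ass tor}: since $(t+a,0)\sim(t,0+\varphi(a))$ and $B$ acts on the second coordinate, $\varphi[t+a]=[(t,\varphi(a))]=[(t,0)]+\varphi(a)=\varphi[t]+\varphi(a)$. At this stage I would also record the routine verification (implicit already in Definition \ref{def ass tor}) that the $B$-action on $\varphi[T]$ is well defined and simply transitive, so that $\varphi[T]$ is genuinely a $B$-torsor; nonemptiness of $\varphi[T]$ follows from that of $T$.

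For the universal property, given a $B$-torsor $R$ and a map $\psi:T\to R$ with $\psi(t+a)=\psi(t)+\varphi(a)$, I would define $\tilde{\psi}:\varphi[T]\to R$ on representatives by $\tilde{\psi}([(t,b)])=\psi(t)+b$. The one genuine point is well-definedness: if $(t+a,b)\sim(t,b+\varphi(a))$, then the two candidate values are $\psi(t+a)+b=\psi(t)+\varphi(a)+b$ and $\psi(t)+(b+\varphi(a))$, and these coincide precisely because $B$ is abelian and $\psi$ intertwines the $A$-action with $\varphi$. Granting well-definedness, $B$-equivariance $\tilde{\psi}(x+b')=\tilde{\psi}(x)+b'$ reads off directly from the formula, and $\tilde{\psi}(\varphi[t])=\psi(t)+0=\psi(t)$, as required.

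Finally I would observe that any $B$-equivariant map between $B$-torsors is automatically an isomorphism: freeness of the $B$-action forces injectivity, transitivity forces surjectivity (both torsors being nonempty), so $\tilde{\psi}$ is an isomorphism of $B$-torsors. For uniqueness I would use that every element of $\varphi[T]$ has the form $\varphi[t]+b$ (indeed $[(t,b)]=\varphi[t]+b$), so the whole torsor is a single $B$-orbit of $\varphi[t_0]$ for any fixed $t_0$; hence any $B$-equivariant map agreeing with $\tilde{\psi}$ on the image of $\varphi[\cdot]$ must agree with it everywhere. I do not expect a real obstacle here — the statement is essentially formal — and the only step that is not purely mechanical is the well-definedness check for $\tilde{\psi}$, whose content is exactly the commutativity of $B$ together with the equivariance hypothesis on $\psi$.
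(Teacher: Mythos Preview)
Your argument is correct and is exactly the routine verification one expects: define $\varphi[t]=[(t,0)]$, set $\tilde\psi([(t,b)])=\psi(t)+b$, and check well-definedness, equivariance, and uniqueness directly from the quotient description in Definition~\ref{def ass tor}. The paper in fact omits the proof of this lemma entirely, so there is no alternative approach to compare against; your write-up supplies precisely the standard details the author chose to leave out.
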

 \begin{terminology}  Given $T\in \tor_A$, $R\in \tor_B$ and $\varphi:A\to B$, we say a map $\psi:T\to R$ \emph{intertwines} $\varphi$ if $\psi(t+a)=\psi(t)+\varphi(a)$ for all $t\in T$ and $a\in A$.  Lemma (\ref{ass tor universal}) states that there is a canonical map from $T$ to $\varphi[T]$ intertwining $\varphi$, and that given any $R\in \tor_B$, a map from $T$ to $R$ intertwining $\varphi$ is the same thing as a homomorphism of $B$-torsors $\varphi[T]\to R$.
 \end{terminology}
 The universal property (\ref{ass tor universal}) can be used to give a formal proof of the following.
 \begin{corollary}\label{ass tor fun}
 \begin{enumerate}\item Let $\varphi:A\to B$ be a homomorphism of abelian groups.  Then there is a functor
\begin{equation} \varphi[\cdot]:\tor_A\to \tor_B\end{equation} sending $T\to \varphi[T]$, and a homomorphism $\psi:T\to T'$ of $A$-torsors to the unique homomorphism $\varphi[\psi]:\varphi[T]\to\varphi[T']$ satisfying $\varphi[\psi](\varphi[t])=\varphi[\psi(t)]$ for all $t\in T.$
\item There is a canonical isomorphism 
\begin{equation} \varphi[A]\cong B,\end{equation} where we consider $A$ as an $A$-torsor with the obvious action on itself. 
\item Given another abelian group $C$ and a homomorphism $\psi:B\to C$, there is a natural isomorphism 
\begin{equation} (\alpha_{\psi,\varphi})_T:\psi[\varphi[T]]\cong (\psi\varphi)[T]. \end{equation}
%\item Given another abelian group $D$ and a homomorphism $\zeta:C\to D$, the diagram of natural isomorphisms
%\begin{equation}\label{ass tor coh} \xymatrix@C20mm{ \zeta[\psi[\varphi[T]]] \ar[r]_{\cong}^{\zeta((\alpha_{\psi,\varphi}))_T} \ar[d]^{\cong}_{(\alpha_{\zeta,\psi})_{\varphi[T]}} & \zeta[\psi\varphi[T]] \ar[d]_{\cong}^{(\alpha_{\zeta,\psi\varphi})} \\ \zeta\psi[\varphi[T]] \ar[r]^{\cong}_{(\alpha_{\zeta\psi,\varphi})_T} & \zeta\psi\varphi[T] }\end{equation} commutes.
\end{enumerate}
\end{corollary}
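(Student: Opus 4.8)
The plan is to treat all three parts as formal consequences of the universal property established in Lemma~\ref{ass tor universal}, using two observations repeatedly. First, the uniqueness clause of that lemma pins down every induced morphism by its effect on the image of the canonical map $\varphi[\cdot]$, so all functoriality and naturality assertions reduce to checking that two $B$-torsor homomorphisms agree on elements of the form $\varphi[t]$. Second, any homomorphism of torsors over a single abelian group is automatically bijective: a homomorphism $f$ of $B$-torsors satisfies $f(s+b)=f(s)+b$, which forces both injectivity and surjectivity. Hence whenever an intertwining map lands in a $B$-torsor, the resulting $B$-torsor homomorphism is already an isomorphism; this is why Lemma~\ref{ass tor universal} can be phrased with ``isomorphism,'' and why parts (2) and (3) produce isomorphisms rather than mere morphisms.

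For part (1), given a homomorphism $\psi:T\to T'$ of $A$-torsors I would form the composite $\varphi[\cdot]\circ\psi:T\to\varphi[T']$, $t\mapsto\varphi[\psi(t)]$. A one-line check, using that $\psi$ intertwines $\mathrm{id}_A$ (i.e. $\psi(t+a)=\psi(t)+a$), shows this composite intertwines $\varphi$, so the universal property yields a unique $B$-torsor homomorphism $\varphi[\psi]:\varphi[T]\to\varphi[T']$ with $\varphi[\psi](\varphi[t])=\varphi[\psi(t)]$. Functoriality is then immediate from uniqueness: $\varphi[\mathrm{id}_T]$ and $\mathrm{id}_{\varphi[T]}$ agree on every $\varphi[t]$, and both $\varphi[\psi'\circ\psi]$ and $\varphi[\psi']\circ\varphi[\psi]$ send $\varphi[t]$ to $\varphi[(\psi'\circ\psi)(t)]$, hence coincide.

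For part (2), I would view $A$ as an $A$-torsor acting on itself by translation and observe that the homomorphism $\varphi:A\to B$, regarded as a map from this torsor to the $B$-torsor $B$, intertwines $\varphi$ — this is precisely the statement that $\varphi$ is a group homomorphism, $\varphi(a'+a)=\varphi(a')+\varphi(a)$. The universal property then delivers the desired isomorphism $\varphi[A]\cong B$. For part (3), I would compose the two canonical maps $T\xrightarrow{\varphi[\cdot]}\varphi[T]\xrightarrow{\psi[\cdot]}\psi[\varphi[T]]$ and check that the result intertwines $\psi\varphi$, applying the defining identities of $\varphi[\cdot]$ and then of $\psi[\cdot]$. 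Thus both $(\psi\varphi)[T]$ and $\psi[\varphi[T]]$, together with their canonical intertwining maps from $T$, solve the same universal problem for the homomorphism $\psi\varphi:A\to C$, so they are canonically isomorphic via a unique $C$-torsor isomorphism $(\alpha_{\psi,\varphi})_T$ compatible with those maps. Naturality in $T$ follows once more from uniqueness: for $\rho:T\to T'$ the two ways around the relevant square are $C$-torsor homomorphisms $(\psi\varphi)[T]\to\psi[\varphi[T']]$ both sending $(\psi\varphi)[t]$ to $\psi[\varphi[\rho(t)]]$, hence equal.

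I do not expect a genuine obstacle here — the content is entirely bookkeeping with the universal property. The only points requiring a little care are the direction of the isomorphism in part (3), ensuring that $(\alpha_{\psi,\varphi})_T$ and its inverse are both recovered from the shared universal property, and the implicit nonemptiness of torsors needed for the canonical maps and the automatic-bijectivity argument to apply; both are routine.
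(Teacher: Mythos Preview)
Your proposal is correct and takes exactly the approach the paper has in mind: the paper states only that ``The universal property (\ref{ass tor universal}) can be used to give a formal proof of the following'' and omits all details, so your derivation of each part from Lemma~\ref{ass tor universal} via uniqueness arguments is precisely what is intended. Your write-up in fact supplies the details the paper leaves implicit.
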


We now generalize to the context of  sheaves over a manifold $M$.  For example, given a principal $A$-bundle $E$ for $A$ a Lie group, the sheaf of sections of $E$ carries an action of the sheaf of smooth $A$-valued functions.  

%Let us introduce some notation for sheaves.   Given a manifold $M$, let $\textbf{Open}(M)$ denote the category whose objects are open subsets of $X$ and whose morphisms are inclusions.   Thus, given a category $A$, a sheaf over $M$ with values in $A$ is a functor $\mathcal{F}:\textbf{Open}(M)^{op} \to A$ satisfying an appropriate gluing condition.  If $\F'$ is another sheaf with values in $A$, a morphism of sheaves $\varphi:\F\to \F'$ is a natural transformation of functors. 
%\begin{notation}  Given an inclusion of open sets $i:V\hookrightarrow U$, we will often denote the restriction map from $\F(U)\to F(V)$ by $i^*$ when the sheaf $\F$ is clear from context.  Given an element $a\in \F(U)$, we will sometimes use the notation $i^*a=a|_V$.
%\end{notation}  

\begin{definition}  Let $M$ be a manifold, and let $A$ be a sheaf of abelian groups over $M$.  Then a \emph{sheaf of $A$-torsors} over $M$ is a sheaf $T$ of sets over $M$ together with a homomorphism of sheaves $\alpha:T\times A\to T$ such that
\begin{enumerate} \item For every open subset $U\subset M$ such that $T(U)$ is non-empty, $(T(U),\alpha(U))$ is an $A(U)$-torsor,
\item Every $x\in M$ is contained in a neighborhood $U\subset M$ such that $T(U)$ is non-empty.
\end{enumerate}
\end{definition} 
\begin{rem} \label{cat tor sheaf}Given a sheaf of abelian groups $A$, one defines a morphism of $A$-torsors in the obvious way.  We will denote the resulting category by $\tor_A$.
\end{rem}
%\begin{rem} For the remainder of this paper, we will work in the smooth setting; in particular all sheaves of groups will be taken to be Lie groups.
%\end{rem}

\begin{ex} As mentioned above, given any principal $A$-bundle $E$, its sheaf of sections $\ul{E}$ is naturally an $\ul{A}_M$-torsor, where $\ul{A}_M$ denotes the sheaf of smooth $A$-valued functions on $M$.  Moreover, the assignment $E\mapsto \ul{E}$ is functorial and induces an equivalence of categories.
\end{ex}
\begin{ex}  We will also be interested in sheaves of groups which are not of the form $\ul{A}_M$ for some fixed Lie group $A$.  For example, given a principal circle bundle $E$ over $M$ the set of connections for $E$ is torsor for the group of 1-forms on $M$.  More generally, we have a sheaf of  connections which is a torsor for the sheaf of (imaginary) 1-forms on $M$.
\end{ex}  

\begin{definition} Given a homomorphism $\varphi:A\to B$ of sheaves of groups over $M$ and torsors $S\in \tor_A$, $T\in \tor_B$, a sheaf homomorphism $\psi:S\to T$ \emph{intertwines} $\varphi$ if for each open set $U\subset M$ such that $S(U)$ is non-empty, $T(U)$ is also non-empty and the map of torsors
\begin{equation} \psi_U:S(U)\to T(U) \end{equation} intertwines 
\begin{equation} \varphi_U:A(U)\to B(U).\end{equation} \end{definition}  We may extend definition (\ref{def ass tor}) of the associated torsor construction to the sheaf setting.  For brevity we give a definition in terms of a universal property, and then sketch a construction.
\begin{definition}\label{ass tor sheaf} Let $\varphi:A\to B$ be a homomorphism of sheaves of groups over $M$, and let $S$ be an $A$-torsor.  Than a \emph{torsor associated to $S$ via $\varphi$} is a $B$-torsor $\varphi[S]$ together with a homomorphism of sheaves
\begin{equation} \varphi[\cdot]:S\to \varphi[S]\end{equation} intertwining $\varphi$.
\end{definition}

One easily shows from the definition that the associated torsor $\varphi[S]$ is unique up to unique isomorphism, so that for will speak of \emph{the} associated torsor.  For a specific construction, we may first take 
\begin{equation} \varphi[S](U)=\varphi_U[S(U)].\end{equation}  Given an inclusion of open sets $i:V\hookrightarrow U$, the restriction map is then characterized by the equation 
\begin{equation} i^*(\varphi_U[s])=\varphi_V[i^*s]\end{equation} for each $s\in S(U)$.  In general this procedure only defines a presheaf, and we must take its sheafification to finish the construction of $\varphi[S]$; for example, if $A=\ul{\T}_M$ and $B=\ul{i\R}_M$ then $S$ may have no global sections, whereas any $\ul{i\R}_M$-torsor does have a global section.  It is then easily checked that the appropriate generalizes of the second part of lemma (\ref{ass tor universal}) as well as corollary (\ref{ass tor fun}) hold in the sheaf setting.

\begin{rem}  Let us see how the associated torsor construction looks in terms of local trivializations and transition functions.  Given a sheaf $A$ of abelian groups over $M$, let $E\in\tor_A$.  Given an open cover $\{U_i\}$ of $M$ and local sections $\sigma_i\in E(U_i)$, we obtain ``transition functions" $a_{ij}\in A(U_{ij}=U_i\cap U_j)$ given by 
\begin{equation} \sigma_j|_{U_{ij}}=\sigma_i|_{U_{ij}}+a_{ij}.\end{equation} It is easily verified that on triple overlaps $U_i\cap U_j\cap U_k:=U_{ijk}$ we have 
\begin{equation}\label{transition function cocycle} a_{jk}-a_{ik}+a_{ij}=0,\end{equation} where each term in equation (\ref{transition function cocycle}) is implicitly restricted to $U_{ijk}$.  Let $B$ be another sheaf of abelian groups over $M$ and $\varphi:A\to B$ a sheaf homomorphism.  By the universal property of the associated torsor construction, we obtain local sections $\varphi[\sigma_i]$ of $\varphi[E]$ over $U_i$.  Since the map taking $\sigma_i\mapsto \varphi[\sigma_i]$ intertwines $\varphi$, it follows that the transition functions for $\varphi[E]$ with respect to the local sections $\varphi[\sigma_i]$ are given by $\varphi(a_{ij})$.  
\end{rem}

 In the next example we introduce an operation  on $\T$-bundles analogous to taking the Lie derivative with respect to a vector field on $M$. 
  \begin{ex} \label{ex tor T sheaf}  Given a manifold $M$, note that a vector field $\xi\in C^{\infty}(TM)$ determines a sheaf homomorphism $\ul{\T}_M\to\ul{i\R}_M$ given by
\begin{equation} \iota_{\xi}\textrm{dlog}: g\mapsto \iota_{\xi}g^{-1}dg. \end{equation}  
  Let $E$ is a principal $\T$-bundle over $M$ and $\ul{E}$ its sheaf of sections.
By the discussion in section \S\ref{VF}, a lift of $\xi$ to $E$ is equivalent to a sheaf homomorphism $f_{\tx}:\ul{E}\to\ul{i\R}$ intertwining $-\iota_{\xi}d\log$.  We may therefore identify the sheaf of such lifts with\footnote{Given sheaves $\A$ and $B$, we will denote by $\ul{\Hom}(A,B)$ the \emph{sheaf} of homomorphisms from $A$ to $B$}   \begin{equation} \ul{\Hom}(-\iota_{\xi}d\log[\ul{E}],\ul{i\R}_M)\cong \ul{\Hom}(\ul{i\R}_M,\iota_{\xi}d\log[\ul{E}])\cong \iota_{\xi}d\log[\ul{E}].\end{equation}  To see this isomorphism concretely on the level of global sections, suppose that we are given a global section $S$ of $\iota_{\xi}d\log[\ul{E}]$.  Given a local section $\sigma\in\ul{E}(U)$, there is a unique function $f_S(\sigma):U\to i\R$ such that 
\begin{equation} \label{local form} S|_U=\iota_{\xi}d\log[\sigma]+f_S(\sigma).\end{equation}  A short calculation shows that given $g:U\to \T$ we must have $f_S(\sigma\cdot g)=f_S(\sigma)-\iota_{\xi}d\log(g)$, so that $\sigma\mapsto f_S(\sigma)$ is a homomorphism from $\ul{E}$ to $\ul{i\R}_M$ intertwining $-\iota_{\xi}d\log$.  Conversely, given such a homomorphism $f_{\tx}$, we can define a global section $\iota_{\xi}d\log[\ul{E}]$ which is given locally by the formula (\ref{local form}).

%In terms of transition functions, if we are given a collection of local sections $\sigma_i$ of $E$ we obtain local sections $\iota_{\xi}d\log[\sigma_i]$ of $\iota_{\xi}d\log[\ul{E}]$ with transition functions $\iota_{\xi}d\log(g_{ij})$.  Thus a global section of $\iota_{\xi}d\log[\ul{E}]$ is given by a collection of functions $f_i:U_i\to i\R$ such that $f_i-f_j=\iota_{\xi}d\log(g_{ij})$; by the discussion in \S\ref{VF} this is exactly the data needed to specify a lift of $\xi$ to $E$ in terms of the local trivializations $\sigma_i$.

\end{ex}
\begin{ex}\label{connection torsor} Let $\Theta$ be a connection on the principal $\T$-bundle $E$.  Then equation (\ref{gauge}) says that the sheaf homomorphism $A_{\Theta}:\ul{E}\to\ul{\Omega}^1_M(i\R)$ defined by equation (\ref{A sheaf}) is consistent with the homomorphism 
\begin{align} \textrm{dlog}: \ul{\T}_M  \to & \ul{\Omega}^1_M \\   g  \mapsto & g^{-1}dg.\notag \end{align}

Proceeding as in the last example, we see that the $i\Omega^1_M$-torsor of connections on $E$ can be identified with the global sections of $-d\log[E]$.  Explicitly, given a connection viewed as a sheaf homomorphism $A_{\Theta}:\ul{E}\to \ul{i\Omega}^1_M$, the corresponding section $\Theta$ of $-d\log[E]$ is given locally by 
\begin{equation}\Theta|_U=-d\log[\sigma]+A_{\Theta}(\sigma) \end{equation} for $\sigma$ a local section of $E$.  We remark also that, given such a section $\Theta$, we obtain a section \begin{equation}\label{horiz section} \iota_{-\xi}[\Theta]\end{equation} of \begin{equation}\iota_{-\xi}[-d\log[E]]\cong \iota_{\xi}d\log[E].\end{equation}  As explained in the last example, global sections of $\iota_{\xi}d\log[E]$ can be identified with lifts of $\xi$ to $E$; in this case, the section (\ref{horiz section}) is the horizontal lift determined by $\Theta$ as described in \S\ref{VF}.

\end{ex}
\section{\label{gerbes} Gerbes} In this section we briefly recall some of the definitions  and constructions related to gerbes which we will need in the remainder of the paper.  More detailed definitions can be found in appendix B.  The standard reference for most of this material is \cite{Br1}, and much of our notation and terminology follows this source.   In addition to this standard material, we introduce some terminology which generalizes the material in \S\ref{torsors} to the context of gerbes.  We will also find it convenient to describe the family of all gerbes over a given manifold using the language of (strict) two-categories.  Our reference for 2-categories is \cite{Bo}.

Following the approach in \cite{Br1}, we use the model of a gerbe $\CC$ over a manifold $M$ as a sheaf of groupoids equipped with extra structure.  For the precise definition of a presheaf of groupoids, see definition (\ref{def groupoid presheaf}) in  appendix B.  Briefly, a presheaf of groupoids $\CC$ consists of the following data: for each open set $U$ of $M$ we have  a groupoid $\CC(U)$, and for each inclusion of open sets $i:V\hookrightarrow U$ we a restriction functor $i^*:\CC(U)\to \CC(V)$.  Given another inclusion of open sets $j:W\hookrightarrow V$, there is a specified natural isomorphism $\alpha_{i,j}:j^*i^*\tocong (ij)^*$.  These natural isomorphisms  are themselves required to satisfy a coherence condition with respect to chains of inclusions of the form 
\begin{equation} T\hookrightarrow W\hookrightarrow V \hookrightarrow U.\end{equation}  

It is a consequence of definition (\ref{def groupoid presheaf}), that for every open set $U\subset M$ and every pair of objects $P,Q\in \CC(U)$ there is a presheaf $\ul{\Hom}(P,Q)$ over $U$.   $\CC$ is called a \emph{prestack} if $\ul{\Hom}(P,Q)$ is actually a sheaf for each open set $U$ and each pair $P,Q$.  A prestack $\CC$ is a \emph{stack} (or sheaf of groupoids) if in addition it satisfies a gluing property, as outlined in the discussion following (\ref{def groupoid presheaf}) in appendix B.

A gerbe is a stack equipped with additional structure analogous to that possessed by a principal bundle.

\begin{definition}\label{def gerbe}  Let $A$ be a sheaf of abelian groups over $M$.  A \emph{gerbe with band $A$} over $M$ is a stack $\CC$ equipped with a family of isomorphisms 
\begin{equation} \alpha_P:\ul{Aut}(P)\tocong A|_U\end{equation} for each open set $U\subset M$ and each object $P\in \CC(U)$, such that for each object $Q\in \CC(U)$ and each isomorphism $\psi:P\to Q$, $\alpha_P$ and $\alpha_Q$ are compatible with the induced isomorphism $\ul{\Aut}(P)\to \ul{\Aut}(Q)$.  In addition, we require that 
\begin{enumerate} \item Every point $x\in M$ is contained in a neighborhood $U\subset M$ such that $\CC(U)$ is non-empty.
\item Any two $P,Q\in \CC(U)$ are \emph{locally isomorphic}; that is, for each $x\in U$ there exists a neighborhood $V$ of $x$ in $U$ such that the restrictions of $P$ and $Q$ to $V$ are isomorphic.
\end{enumerate}
\end{definition}
\begin{rem} \label{morphism torsor} Note that for any two local sections $P,Q\in \CC(U)$, the sheaf $\ul{\Hom}(P,Q)$ has the structure of a $\ul{\T}_U$-torsor: given $\psi\in \ul{\Hom}(P,Q)(V)$ for $V\subset U$ and $g:V\to \T$, we define 
\begin{equation} \psi\cdot g=\alpha_P(g)\circ \psi.\end{equation}  Conversely, as explained in \cite{Br1}, given any object $P\in \CC(U)$ and any $\ul{\T}_U$-torsor $E$ there exists an object $P\otimes E$ such that there is an isomorphism of $\ul{\T}_U$-torsors $\ul{\Hom}(P,P\otimes E)$.  More precisely, there is a functor \begin{equation}
\CC(U)^{op}\times \CC(U)\to \CC(U)^{op}\times \textbf{Tor}_{\ul{\T}_U}\end{equation} taking 
\begin{equation} (P,Q) \mapsto (P, \ul{\Hom}(P,Q)) \end{equation} which is an equivalence of categories.  In this sense, the category $\CC(U)$ is a torsor for the category of $\ul{\T}_{U}$-torsors.
\end{rem}

\begin{ex}\label{trivial gerbe}  Let $A$ be a sheaf of abelian groups over $M$.  Then the \emph{trivial gerbe} $\textrm{B}(A)$ with band $A$  associates to each open set $U\subset M$ the category of $A|_U$-torsors.    
\end{ex}

Next, let us discuss morphisms and 2-morphisms between presheaves of groupoids;  for precise definitions see definitions (\ref{stack morphism}) and  (\ref{2-morphism}) in the appendix.    Given presheaves of groupoids $\CC$ and $\CC'$, a morphism of $\Phi:\CC\to \CC'$  consists first of all of a collection of functors 
\begin{equation} \Phi_U:\CC(U)\to \CC'(U).\end{equation}  In addition, for each inclusion $i:V\hookrightarrow U$  the data of $\Phi$ includes coherent natural transformations $\Phi_i:i^*_{\CC'}\Phi_U\Rightarrow \Phi_Vi^*_{\CC}$.  In the terminology of \cite{Bo}, this is called a \emph{pseudo-natural transformation}.    Given a pair of morphisms $\Phi,\Psi:\CC\to \CC'$, a 2-morphism $\tau:\Phi\Rightarrow \Psi$ consists of natural transformations $\tau_U:\Phi_U\Rightarrow \Psi_U$ which are suitably compatible with the structure of $\Phi$ and $\Psi$.  In \cite{Bo}, this is called a \emph{modification}.

The following is a consequence of proposition (7.5.4) in \cite{Bo}.
\begin{proposition} \label{presheaf 2-category}There is a 2-category whose objects are presheaves of groupoids over $M$, whose 1-morphisms are pseudo-natural transformations, and whose 2-morphisms are modifications.
\end{proposition}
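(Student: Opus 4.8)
The plan is to exhibit presheaves of groupoids, their pseudo-natural transformations, and their modifications as a single instance of the general pseudo-functor bicategory, so that the assertion follows directly from Proposition (7.5.4) of \cite{Bo} once the relevant definitions have been matched up.

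First I would set up the source and target. Let $\mathcal{U}_M$ denote the poset of open subsets of $M$ ordered by inclusion, regarded as a (strict) $2$-category with a single identity $2$-cell for each inclusion and no nonidentity $2$-cells. Let $\mathbf{Grpd}$ denote the strict $2$-category of groupoids (small relative to a fixed universe, to sidestep size issues), functors, and natural transformations; since every natural transformation between functors of groupoids is automatically invertible, $\mathbf{Grpd}$ is a full sub-$2$-category of $\mathbf{Cat}$. The key observation is then that Definition (\ref{def groupoid presheaf}) unwinds, term for term, to the data and axioms of a pseudo-functor (a homomorphism of bicategories in the sense of \cite{Bo}) $\CC\colon \mathcal{U}_M^{\mathrm{op}}\to \mathbf{Grpd}$: the assignment $U\mapsto \CC(U)$ supplies the action on objects; each inclusion $i\colon V\hookrightarrow U$ is sent to the restriction functor $i^*$, the contravariance being absorbed into $\mathcal{U}_M^{\mathrm{op}}$; the specified natural isomorphisms $\alpha_{i,j}\colon j^*i^*\Rightarrow(ij)^*$ are precisely the compositor constraints (with the evident unit constraints for the identity inclusions); and the coherence condition imposed in (\ref{def groupoid presheaf}) for chains $T\hookrightarrow W\hookrightarrow V \hookrightarrow U$ is exactly the associativity coherence axiom for the compositors.

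Next I would match the higher cells. Comparing Definition (\ref{stack morphism}) with the notion of pseudo-natural transformation in \cite{Bo}, a morphism $\Phi\colon\CC\to\CC'$ of presheaves of groupoids — a family of functors $\Phi_U$ together with coherent naturality isomorphisms $\Phi_i\colon i^*_{\CC'}\Phi_U\Rightarrow\Phi_V i^*_{\CC}$ — is exactly a pseudo-natural transformation of the corresponding pseudo-functors; likewise, comparing Definition (\ref{2-morphism}), a $2$-morphism $\tau\colon\Phi\Rightarrow\Psi$, given by a compatible family $\tau_U\colon\Phi_U\Rightarrow\Psi_U$, is exactly a modification. In each case the compatibility and coherence conditions recorded in the appendix are, after the translation above, verbatim the conditions appearing in \cite{Bo}.

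With this dictionary in place the result is immediate. Proposition (7.5.4) of \cite{Bo} asserts that for bicategories $\mathcal{A}$ and $\mathcal{B}$ the pseudo-functors $\mathcal{A}\to\mathcal{B}$, pseudo-natural transformations, and modifications form a bicategory, with the various composites of modifications computed pointwise; applying this with $\mathcal{A}=\mathcal{U}_M^{\mathrm{op}}$ and $\mathcal{B}=\mathbf{Grpd}$ yields the desired structure. Moreover, because the source $\mathcal{U}_M^{\mathrm{op}}$ carries only identity $2$-cells and the target $\mathbf{Grpd}$ is strict, the horizontal and vertical composites of modifications inherit strict associativity and unitality, so the resulting bicategory is in fact a strict $2$-category, as claimed. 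I expect the only genuine work — and hence the main obstacle — to lie in the bookkeeping of the first two paragraphs: verifying that the coherence data $\alpha_{i,j}$ and the compatibility axioms of the appendix's definitions align exactly, and with the correct variance, with Borceux's axioms, so that the general theorem transfers without any modification.
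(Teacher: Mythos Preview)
Your proposal is correct and is exactly the approach the paper takes: the paper simply states that the proposition is a consequence of Proposition (7.5.4) in \cite{Bo}, and your write-up spells out precisely the identification needed to invoke that result. If anything, you have supplied more detail than the paper does.
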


In particular,  we have a well-defined composition of 1-morphisms, as well as vertical and horizontal composition of 2-morphisms.

In the case that $\CC$ and $\CC'$ are gerbes, we will be interested in morphisms which interact well with the additional structure specified in definition (\ref{def gerbe}).

\begin{definition}  Let $\varphi:A\to B$ be a homomorphism of sheaves of abelian groups over $M$.  Given a gerbe $\CC$ over $M$ with band $A$ and a gerbe  $\CC'$ with band $B$, a 1-morphism $\Phi:\CC\to \CC'$ between their underlying sheaves of groupoids is said to \emph{intertwine $\varphi$} if for each object $P\in \CC(U)$, the diagram
\begin{equation} \label{gerbe intertwine}\xymatrix{ \underline{\Aut}(P) \ar[d]_{\alpha_P} \ar[r]^-{\Phi_U} & \underline{Aut}(\Phi_U(P)) \ar[d]^{\alpha'_{\Phi_U(P)}} \\ A|_U \ar[r]_{\varphi|_{U}} & B|_U}\end{equation}
commutes.  \end{definition}  
\begin{rem} In terms of the notation (\ref{morphism torsor}), for each $P,Q\in \CC(U)$, each local section $\psi\in \ul{\Hom}(P,Q)(V)$ and each $g:V\to \T$, the diagram (\ref{gerbe intertwine}) implies that 
\begin{equation} \Phi(\psi\cdot g)=\Phi(\psi)\cdot\varphi(g).\end{equation}
\end{rem}
\begin{rem} By proposition (\ref{presheaf 2-category}), given a gerbe $\CC$ with band $A$ and $\CC'$ with band $B$, there is a category of 1-morphisms between presheaves underlying $\CC$ and $\CC'$.  Given $\varphi:A\to B$, we then have a subcategory of 1-morphisms from $\CC$ to $\CC'$ that intertwine $\varphi$, which we will denote 
\begin{equation} \Hom_{\varphi}(\CC,\CC').\end{equation}
\end{rem}

The following proposition says that the associated torsor construction can be extended to a 1-morphism of gerbes.  The proof, which is omitted, is a straightforward application of the definitions.
\begin{proposition} \label{ass gerbe coh} \begin{enumerate} \item Let $\varphi:A\to C$ be a homomorphism of sheaves of abelian groups over $M$.  Then there is a 1-morphism $\varphi[\cdot]:\textrm{B}(A)\to\textrm{B}(C)$ intertwining $\varphi$ defined on each open set $U\subset M$ by the functor $\varphi|_U[\cdot]:\tor_{A|_U}\to \tor_{B|_U}$ given in definition (\ref{ass tor sheaf}). 
\item Given another sheaf of abelian groups $D$ and a homomorphism $\psi:C\to D$, there exists a 2-morphism
\begin{equation} \psi[\varphi[\cdot]]\Rightarrow (\psi\varphi)[\cdot]. \end{equation} 
%\item Given another abelian group $E$ and a homomorphism $\zeta:D\to E$, the diagram of 2-morphisms corresponding to the diagram (\ref{ass tor coh}) commutes.
\end{enumerate}
\end{proposition}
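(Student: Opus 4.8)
The plan is to build the 1-morphism of part (1) one open set at a time, assemble the pieces into a pseudo-natural transformation using the universal property of the associated torsor, and then read off the 2-morphism of part (2) from the natural isomorphisms already recorded in Corollary \ref{ass tor fun}(3). The single fact that does almost all of the work is the rigidity of torsors --- equivalently, the uniqueness clause in the universal property (the second part of Lemma \ref{ass tor universal} and its sheaf-theoretic analogue): a homomorphism of torsors is pinned down by a single value, so any two torsor homomorphisms agreeing on the image of a canonical intertwining map coincide, and every coherence diagram below reduces to such an observation.

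For part (1), first I would set $\Phi_U=(\varphi|_U)[\cdot]\colon \tor_{A|_U}\to \tor_{C|_U}$ on each open $U\subset M$, functoriality being supplied by the sheaf version of Corollary \ref{ass tor fun}(1). To upgrade this family of functors to a pseudo-natural transformation I must produce, for each inclusion $i\colon V\hookrightarrow U$, a natural isomorphism $\Phi_i\colon i^*\Phi_U\Rightarrow \Phi_V i^*$. Given an $A|_U$-torsor $S$, the restriction to $V$ of the canonical intertwining map $S\to \varphi|_U[S]$ is a map $i^*S\to i^*(\varphi|_U[S])$ intertwining $\varphi|_V$; by the universal property of $\varphi|_V[i^*S]$ it factors uniquely through a homomorphism of $C|_V$-torsors $\varphi|_V[i^*S]\to i^*(\varphi|_U[S])$, which is automatically an isomorphism, and I take $\Phi_i(S)$ to be its inverse. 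Naturality of $\Phi_i$ in $S$ is immediate: both legs of the naturality square are $C|_V$-torsor homomorphisms sending the image of the canonical intertwining map to the same element, hence equal by rigidity.

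The coherence of the $\Phi_i$ along a chain $W\hookrightarrow V\hookrightarrow U$, namely their compatibility with the restriction isomorphisms $\alpha_{i,j}$ of the presheaf, is dispatched identically: both composites around the relevant diagram are $C|_W$-torsor homomorphisms agreeing on the image of the canonical intertwining map, hence equal. It then remains to verify the intertwining condition (\ref{gerbe intertwine}). For an $A|_U$-torsor $P$, the automorphism $\theta\colon p\mapsto p+a$ with $a\in A|_U$ is sent by $\Phi_U$ to the homomorphism $\varphi[\theta]$ characterized by $\varphi[\theta](\varphi[p])=\varphi[\theta(p)]=\varphi[p+a]=\varphi[p]+\varphi(a)$, i.e. to the automorphism $q\mapsto q+\varphi(a)$ of $\varphi|_U[P]$; under the canonical identifications $\ul{\Aut}(P)\cong A|_U$ and $\ul{\Aut}(\Phi_U P)\cong C|_U$ this says exactly that the square (\ref{gerbe intertwine}) commutes.

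For part (2), the two 1-morphisms $\psi[\varphi[\cdot]]$ and $(\psi\varphi)[\cdot]$ both run $\textrm{B}(A)\to \textrm{B}(D)$, the first being the composite of the 1-morphisms from part (1) (well-defined by Proposition \ref{presheaf 2-category}) and the second the 1-morphism attached to $\psi\varphi$. I would take the component $\tau_U$ of the desired modification to be the natural isomorphism $\psi|_U[\varphi|_U[S]]\cong (\psi|_U\varphi|_U)[S]$ furnished by the sheaf version of Corollary \ref{ass tor fun}(3). The only substantive point is the modification axiom, which compares $\tau_U$ and $\tau_V$ through the pseudo-naturality data of the two composites: for each inclusion $i$ and each $S$ one gets a square of $D|_V$-torsor homomorphisms, and, exactly as above, both ways around it send the image of the canonical intertwining map to the same element and so agree by rigidity. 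I expect this last bookkeeping --- keeping straight which intertwining map each corner is tested against, and the order in which the restriction and the associated-torsor comparisons are composed --- to be the only genuinely fiddly step; everything else is formal.
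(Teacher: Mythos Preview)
Your proposal is correct and is precisely the ``straightforward application of the definitions'' that the paper alludes to; the paper itself omits the proof entirely. Your systematic use of the rigidity of torsor homomorphisms (the uniqueness clause in Lemma~\ref{ass tor universal}) to collapse each coherence check is the right organizing principle, and nothing more is needed.
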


We now recall several operations on gerbes which we will need.  First, given a gerbe $\CC$ over $M$ with band $A$, we may construct in an obvious way the opposite gerbe $\CC^{op}$, which assigns to each open set $U\subset M$ the category $\CC(U)^{op}$.  Since $A$ is abelian, $\CC^{op}$ canonically has the structure of a gerbe with band $A$.  There is a 1-morphism of gerbes 
\begin{equation} \CC\to \CC^{op} \end{equation} intertwining the homomorphism 
\begin{equation} g\mapsto g^{-1}.\end{equation}  This morphism is a bijection on the level of both objects and morphisms: on the set of objects it is the identity, and on the level of morphisms it maps 
\begin{equation} \psi:P\to Q\end{equation} to 
\begin{equation} \psi^{-1}:Q\to P.\end{equation}  The next construction involves a homomorphism $\varphi:A\to B$ of sheaves of abelian groups over $M$.  On page 199 of \cite{Br1} Brylinski describes the construction of an associated gerbe with band $B$ (Brylinski calls this gerbe $\CC\times^A B$, but following our earlier notation for torsors we will call it $\varphi[\CC]$).  Furthermore, from the construction of $\varphi[\CC]$ one obtains a 1-morphism 
\begin{equation} \varphi[\cdot]:\CC\to \varphi[\CC]\end{equation} intertwining $\varphi$.  As discussed in example (\ref{ass tor ex}) for torsors, one can use the associated gerbe construction to construct the tensor product of gerbes, as well as the dual of a gerbe.  %{\color{red} say something about fact that associated gerbe is characterized by existence of the 1-morphism?}

Next, let $f:M\to M'$ be a smooth map of manifolds.  Given a sheaf of groupoids $\CC$ over $M$ with band $A$, one constructs the direct image $f_*\CC$ in the obvious way: for example, for each open set $U\subset N$ we have 
\begin{equation} f_*(\CC)(U)=\CC(f^{-1}(U)).\end{equation}  Generically, if $\CC$ has the structure of a gerbe with band $A$, $f_*\CC$ will not necessarily have the structure of a gerbe with band $f_*A$; on the other hand, proposition (5.2.7) from \cite{Br1} shows that this will be under certain assumptions about the map $f$ and the sheaf of groups $A$.  We also point out that, given another smooth map $g:M'\to M''$, we have a natural identification of $g_*(f_*\CC)$ with $(gf)_*\CC$.

Given a gerbe $\CC$ over $M'$ with band $A$, we may also form the inverse image $f^*\CC$, which is a gerbe with band $f^*A$ over $M$.  If $\CC$ is a Dixmier-Douady gerbe (i.e. $A=\ul{\T}_{M'}$), then the inverse image sheaf $f^*(\ul{\T}_{M'})$ will generally not be equal to $\ul{\T}_M$, so that $f^*\CC$ is not a DD gerbe.  On the other hand, we have a natural sheaf homomorphism 
\begin{equation} \varphi:f^*(\ul{\T}_{M'})\to \ul{\T}_M,\end{equation} and we can form the associated gerbe $\varphi[f^*\CC]$, which is a DD gerbe over $M$; from now on we will use the notation $f^*\CC$ denote this gerbe over $M$.  We remark that, as in the case of the associated gerbe, the inverse image  $f^*\CC$ of a DD gerbe is characterized by the following universal property.   There exists a morphism of gerbes 
\begin{equation} f^*:\CC\to f_*(f^*\CC) \end{equation} intertwining the canonical homomorphism 
\begin{equation} f^*:\ul{\T}_{M'}\to f_*\ul{\T}_M.\end{equation}

\section{\label{section gerbe symmetries} Infinitesimal Symmetries of Gerbes} In this section, we explain how to generalize the discussion of \S\ref{VF} to define the infinitesimal symmetries of a Dixmier Douady gerbe $\CC$ over a manifold $M$.  Since $\CC$ is not itself a manifold, we cannot directly import structures from differential geometry such as vector fields or flows.  We  instead proceed by analogy to the circle bundle case.  To make the analogy clearer, we introduce a concept from algebraic geometry. The basic idea, inspired by the discussion in \cite{Ra}, is to use a ringed space $I_1$, sometimes called the ``dual numbers", which can be thought of as a first order formal curve.  In \cite{Ra}, the authors study maps from $I_1$ into a stack $X$ to motivate the definition of the tangent stack to $X$.  Similarly, given a gerbe $\CC$ over $M$, we will consider the pullback of $\CC$ to $I_1$ via a map $I_1\to M$.  This somewhat informal discussion motivates our definition (\ref{def symmetry gerbe}).  In \S\ref{local flows} (and in appendix A), we provide another perspective on  $\L_{\CC}$ by relating infinitesimal symmetries to families of (non-infinitesimal) symmetries of $\CC$ through a process analogous to differentiation.  
\subsection{Gerbes over the formal interval}
  Consider the ring 
\begin{equation} R=\R[\epsilon]/(\epsilon)^2.\end{equation}  We can form a ringed space 
\begin{equation} I_1=\textrm{Spec}(R),\end{equation} which has a single underlying point, and whose ring of functions is  
\begin{equation} \mathcal{O}(\{*\})=R.\end{equation}  Let $X$ be a smooth manifold, considered as a ringed space with its sheaf of smooth real valued-functions.  Then the set of maps from $I_1$ to $X$ in the category of ringed spaces is identified with the set of ring homomorphisms 
\begin{equation} \Hom(C^{\infty}(X),R).\end{equation}  Note in particular that we have an inclusion 
\begin{equation} \iota:\{*\}\hookrightarrow I_1 \end{equation} corresponding to the homomorphism from $R\to \R$ sending $a+b\epsilon\mapsto a$, as well as a retraction 
\begin{equation} r:I_1\to \{*\}\end{equation} corresponding to the inclusion $\R\hookrightarrow R$.  Given a map $\phi$ from $I_1$ to a smooth manifold $X$, let 
$\phi^*:  C^{\infty}(X)    \to R $ be the corresponding ring homomorphism, and write 
\begin{equation} \phi^*f=a(f)+b(f)\epsilon \end{equation} for each $f\in C^{\infty}(X)$.  Let $x=\phi\circ \iota(*)\in M$ be the image the underlying point of $I_1$ in $M$, then 
\begin{equation} a(f)=(\phi\circ\iota)^*f=f(x),\end{equation} i.e. $a:C^{\infty}(X)\to \R$ is the homomorphism which evaluates each function at the point $x$.  The condition that $\phi^*$ be a ring homomorphism then implies that for every pair of functions $f,g\in C^{\infty}(X)$ we have 
\begin{equation} b(fg)=f(x)b(g)+g(x)b(f),\end{equation} so that there is a unique tangent vector $\xi\in T_xX$ such that 
\begin{equation} b(f)=\xi(f).\end{equation} Conversely, any tangent vector $\xi\in TX$ determines a map 
\begin{equation} \phi_{\xi}:I_1\to M.\end{equation} Thus $\Map(I_1,X)$ is naturally identified with the tangent space of $X$.  Heuristically, we think of $I_1$ as a ``formal interval" and the map $\phi_{\xi}$ as an infinitesimal curve in $X$ in the direction $\xi$.

 Next, let $E$ be a principal $\T$-bundle over $M$, and let $\ul{E}$ be its sheaf of sections.  The restriction of $\ul{E}$ to the point $x$ is a skyscraper sheaf with stalk a torsor for the set of germs of smooth $\T$-valued functions at $x$.  On the other hand, there is a homomorphsim 
 \begin{equation} \rho_x:St_x(\ul{\T}_M)\to \ul{\T}_{\{x\}}\end{equation} which evaluates each germ at the point $x$ .  We define the restriction $(\ul{E})_x$ of $\ul{E}$ to $x$ (in the category of $\ul{\T}$-torsors) to be the associated torsor; note that we have a natural identification 
\begin{equation} (\ul{E})_x\cong \ul{(E_x)}.\end{equation}

Given a  Lie group $G$, the set of maps $\textrm{\Map}(I_1,G)$ also has the structure of a group which we identify with the product $G\times \g\cong TG$.  In particular, we may define a principal $\T$-bundle over $I_1$ to be a $\T\times i\R$-torsor, and we can similarly define a DD gerbe over $I_1$.   Given a tangent vector $\xi\in T_xM$ thought of as a map $\phi_{\xi}:I_1\to M$, we can then construct the inverse image torsor $\phi_{\xi}^*\ul{E}$ over $I_1$.  There is a restriction map 
\begin{equation} \ul{E}\to (\phi_{\xi})_*(\phi_{\xi})^*\ul{E}\end{equation} intertwining the map
\begin{equation} g\mapsto (g(x),\iota_{\xi_x}d\log(g)).\end{equation}  The maps $\iota:\{*\}\to I_1$ and $r:I_1\to \{*\}$ yield maps
 \begin{equation} \label{map 1}\iota^*:\phi_{\xi}^*\ul{E}\to \ul{E}_x\end{equation} and 
 \begin{equation} r^*:\ul{E}_x\to \phi_{\xi}^*\ul{E}\end{equation} such that $\iota^*\circ r^*=\textrm{id}_{\ul{E}_x}.$  The kernel of (\ref{map 1}) is isomorphic to $\iota_{\xi}d\log[\ul{E}]_x$,  and we therefore obtain an isomorphism
 \begin{equation} \phi_{\xi}^*\ul{E}\cong \ul{E}_x\times \iota_{\xi}d\log[\ul{E}]_x. \end{equation} Recall from example (\ref{ex tor T sheaf}) that the second factor can be identified with the set of $\T$-invariant lifts of $\xi_x$ to $E$.  Thus, we can encode the infinitesimal symmetries of $E$ point-wise by pulling back $\ul{E}$ to $I_1$; informally this amounts to restricted $E$ to an infinitesimal curve in $M$.
 
Recall from the previous section that the inverse image and associated torsor construction also make sense for gerbes.  Thus, given a tangent vector $\xi\in T_xM$ and a DD gerbe $\CC$ over $M$ we can form a gerbe $\phi_{\xi}^*\CC$ over $I_1$ with band $\T\times i\R$.  This gerbe  is naturally isomorphic to a product
\begin{equation} \CC_x\times \iota_{\xi}dlog[\CC]_x,\end{equation} where the first factor should be thought of as the fiber of $\CC$ at $x$ and the second term encodes the infinitesimal symmetries of $\CC$ near $x$ lifting $\xi$.  It is therefore natural to define the infinitesimal symmetries of $\CC$ lifting $\xi$ as the associated gerbe $\iota_{\xi}d\log[\CC]$.

  Rather than working with the associated gerbe $\iota_{\xi}d\log[\CC]$, we will instead  use of an alternative definition of the infinitesimal symmetries of $\CC$.  Recall from section \S\ref{gerbes} that  there is a morphism of gerbes $\CC\to \iota_{\xi}d\log[\CC]$ intertwining $\iota_{\xi}d\log$.  In particular, for each local section $Q$ of $\CC$ we obtain a local section $\iota_{\xi}d\log[Q]$ of $\iota_{\xi}d\log[\CC]$. Suppose we are given a global section $S$ of $\iota_{\xi}d\log[\CC]$.  Then for each open set $U\subset M$ and each $Q\in \CC(U)$, we can form the $\ul{i\R}_U$-torsor $\ul{\Hom}(\iota_{\xi}d\log[Q],S|_U)$.  More formally, the section $S$ defines a morphism of gerbes from  $\CC^{op}$  to $\textrm{B}(\ul{i\R}_M)$ intertwining  $\iota_{\xi}d\log$.  Moreover one can check that the assignment 
  \begin{equation} S\mapsto \ul{\Hom}(\iota_{\xi}d\log[\cdot],S) \end{equation} defines an isomorphism from the category  of global sections of $\iota_{\xi}d\log[\CC]$ to the category 
  \begin{equation} \Hom_{\iota_{\xi}d\log}(\CC^{op},\textrm{B}(\ul{i\R}_M)) \end{equation} of 1-morphisms from $\CC^{op}$ to $\textrm{B}(\ul{i\R}_M)$ intertwining $\iota_{\xi}d\log$.  Using the canonical 1-morphism from $\CC$ to $\CC^{op}$ intertwining $g\mapsto g^{-1}$, we have a canonical equivalence (which is actually a bijection on the level of objects and morphisms)
  \begin{equation}\Hom_{\iota_{\xi}d\log}(\CC^{op},\textrm{B}(\ul{i\R}_M)) \tocong \Hom_{-\iota_{\xi}d\log}(\CC,\textrm{B}(\ul{i\R}_M)).\end{equation}
  
    \begin{definition}\label{def symmetry gerbe}  Let $\CC$ be a DD gerbe over a manifold $M$, and let $\xi$ be a vector field on $M$.  Then the category of infinitesimal symmetries of $\CC$ lifting $\xi$ is 
 \begin{equation} \L_{\CC}(\xi)=\Hom_{-\iota_{\xi}dlog}(\CC,\textrm{B}(i\R_M)).\end{equation} 
 \end{definition}
 \begin{notation} We will often  use the notation $\tx$ to denote an element of $\L_{\CC}(\xi)$; we call $\tx$ a \emph{lift} of $\xi$ to $\CC$.  We also introduce the following notational convention, which in the present context is vacuous but will prove notationally useful when we consider connective lifts in \S\ref{conn lifts}: given a lift $\tx\in \L_{\CC}(\xi)$, we will say $\tx$ \emph{determines} a 1-morphism 
 \begin{equation} F_{\tx}:\CC\to \textrm{B}(\ul{i\R}_M).\end{equation}  Of course, by definition an element of $\L_{\CC}(\xi)$ is a 1-morphisms, so in the present case $\xi=F_{\tx}$.   
 \end{notation}
\begin{rem} Although for simplicity we have defined a \emph{category} of symmetries of $\CC$ lifting $\xi$, we clearly could have defined a gerbe of symmetries $\ul{\Hom}_{-\iota_{\xi}dlog}(\CC,\textrm{B}(\ul{i\R}_M))$ assigning to each open set $U\subset M$ the category of morphisms  from  $\CC|_U$ to $\textrm{B}(\ul{i\R}_U)$ intertwining  $-\iota_{\xi|_U}d\log$.  In other contexts (for example in a holomorphic setting) it may be that the gerbe analogous to $\ul{\Hom}_{-\iota_{\xi}dlog}(\CC,\textrm{B}(i\R_M))$ has no global sections, in which case it would be crucial to work with the gerbe of symmetries itself. 
\end{rem}
 
\begin{ex} \label{trivial inf lift}   Suppose that $\CC=\textrm{B}(\ul{\T}_M)$ is the trivial Dixmier-Douady gerbe over $M$.  
\begin{definition}\label{trivial lift} The \emph{trivial lift} of a vector field $\xi$ to $\textrm{B}(\ul{\T}_M)$ is the 1-morphism 
\begin{equation} \tx_0=-\iota_{\xi}d\log[\cdot]:\textrm{B}(\ul{\T}_M)\to \textrm{B}(\ul{i\R}_M)\end{equation} sending each $\ul{\T}_U$-torsor to the associated $\ul{i\R}_U$-torsor described in proposition (\ref{ass gerbe coh}).
\end{definition}    To understand the geometric origin of the trivial lift, consider by analogy the trivial $\ul{\T}_M$-torsor $\ul{\T}_M$.  A section of this torsor is simply a function $f$, and since we can compare the value of $f$ at distinct points on $M$ we can differentiate $f$.  Put differently, given a diffeomorphism $\varphi:M\to M$, we can pull-back a principal bundle $P$ to obtain a different principal bundle, but if $P$ is the trivial bundle then we can pull-back \emph{sections} of $P$ via $\varphi$ to obtain sections of the same bundle.  Similarly, given any gerbe $\CC$ over $M$, we can pull back via $\varphi$ to obtain another gerbe over $M$, but if $\CC$ is the trivial gerbe then we can pull-back sections of $\CC$, i.e. principal bundles, and $-\iota_{\xi}d\log[\cdot]$ is (minus) the corresponding directional derivative operation.  

This analogy can be made more precise by considering the relationship between 1-parameter families of symmetries of gerbes (in particular, $\R$-equivariant gerbes) and infinitesimal symmetries, as discussed in appendix $A$.  Given any 1-parameter family $\Phi$ of diffeomorphisms of $M$, there is a canonical lift of $\Phi$ to the trivial gerbe over $M$.  Applying the differentiation functor defined in (\ref{def diff fun}) one obtains a lift of the vector field generating $\Phi$ which is naturally isomorphic to the one defined in definition (\ref{trivial lift}). 

More generally, let $G$ be a Lie group acting on $M$.  If $\CC$ is given the structure of a $G$-equivariant gerbe, then for every 1-parameter subgroup of $G$ we obtain an infinitesimal symmetry of $\CC$ by applying the differentiation functor. 
\end{ex}

\section{\label{Cech}The Cech picture}
 
In this section we introduce a more concrete ``Cech" version of the category $\L_{\CC}$.  In this picture one works not with sheaves of categories but with (locally defined) differential forms.  We could similarly develop a version of $\L_{\CC}$ in terms of other models of gerbes, such as bundle gerbes.

\begin{definition} \label{local triv}  Let $\CC$ be a DD gerbe over a manifold $M$.  A \emph{collection of local trivializations} of $\CC$ consists a triple $\{\{U_i\},\{Q_i\},\{s_{ij}\}\}$ where 
\begin{enumerate} \item $\{U_i\}$ is an open cover of $M$,
\item $Q_i$ is a section of $\CC(U_i)$, and 
\item $s_{ij}:Q_i|_{U_{ij}}\to Q_{j}|_{U_{ij}}$, where $U_{ij}=U_i\cap U_j$.
\end{enumerate}  
\end{definition}
\begin{rem}  Such a collection of local trivializations gives rise to \emph{Cech data} for $\CC$, which is a collection of functions $\{g_{ijk}\}$ from triple intersections $U_{ijk}:=U_i\cap U_j\cap U_k$ to  $\T$.  In terms of the discussion of the descent property for gerbes in appendix B, the functions $g_{ijk}$ are are a cocycle whose cohomology class is the obstruction to gluing the local data $\{\{Q_i\},\{s_{ij}\}\}$ to form a global section of $\CC$.  Explicitly, we define  
\begin{equation}\label{cocycle definition} s_{jk}\circ s_{ij}=s_{ik}\cdot g_{ijk}. \end{equation}  Because of the associativity of the composition of morphisms, it follows that on 4-fold overlaps $U_{ijkl}$ the cocycle condition
\begin{equation} \label{g cocycle} g_{jkl}g_{ikl}^{-1}g_{ijl}g_{ijk}^{-1}=1\end{equation} is satisfied.

\end{rem}
%\begin{rem} In certain situations, for example when the manifold $M$ possesses some type of symmetry, it is convenient to work with local trivializations where the open cover $\{U_i\}$ is \emph{not} assumed to be a good open cover, i.e. we do note assume that each open set and their n-fold intersections are contractible.  However, for the remainder of this paper we will make this assumption.
%\end{rem}

\begin{notation} If $\mathcal{F}$ is any sheaf over an open set $U\subset M$, we will use the notation $x\in \mathcal{F}$ to mean that $x$ is a global section of $\mathcal{F}$ over $U$.
\end{notation}

\begin{definition}\label{def local triv}  Let $\tx$ be a lift of a vector field $\xi$ to $\CC$.  A \emph{collection of local trivializations} for $\tx$ relative to $\{\{U_i\},\{Q_i\},\{s_{ij}\}\}$ is a choice of sections $r_i\in F_{\tx}(Q_i)(U_i)$ for each $i$.
\end{definition}

Given such a collection of local trivializations of $\tx$, we can define $i\R$-valued functions $f_{ij}$ on double intersections according to the formula
\begin{equation} \label{lift Cech}r_j=F_{\tx}(s_{ij})(r_i)+f_{ij}. \end{equation}  We will refer to these functions as \emph{Cech data} for the lift $\tx$.  Definition (\ref{def symmetry gerbe}) together with equation (\ref{cocycle definition}) imply that on triple intersections we have 
\begin{equation}\label{lift delta} f_{jk}-f_{ik}+f_{ij}=\iota_{\xi}d\log(g_{ijk}), \end{equation} where $\{g_{ijk}\}$ is the Cech data corresponding to the local trivializations of $\CC$.  

Given another lift $\tx'$ of $\xi$ to $\CC$ with sections $r_i'$ of $F_{\tx'}(Q_i)$, and given an equivalence $T:F_{\tx}\to F_{\tx'}$, define functions $u_i:U_i\to i\R$ by 
\begin{equation} r_i'=T_{Q_i}(r_i)+u_i.\end{equation}  The naturality of $T$ implies that on double intersections we have 
\begin{equation}\label{cech morphism} f_{ij}-f'_{ij}=u_j-u_i.\end{equation}  

These considerations motivate the following definition.
\begin{definition} \label{Cech lift data} Let $\{g_{ijk}:U_{ijk}\to \T\}$ be a Cech cocycle on a manifold $M$ with respect to a good open cover $\{U_i\}$.  The category $\L_{g_{ijk}}$ has as objects the set of pairs  $(\xi,\{f_{ij}\})$, where $\xi$ is a vector field on $M$ and $\{f_{ij}:U_{ij}\to i\R\}$ is a collection of functions satisfying equation (\ref{lift delta}) on triple intersections.  A morphism from $(\xi,\{f_{ij}\})$ to $(\xi,\{f'_{ij}\})$ is a collection of functions $\{u_i:U_i\to i\R\}$ satisfying equation (\ref{cech morphism}).
\end{definition}

\begin{notation}Given a fixed vector field $\xi$, let $\L_{g_{ijk}}(\xi)$ denote the subcategory of $\L_{g_{ijk}}$ with objects of the form $(\xi,\{f_{ij}\})$ for some collection of functions $\{f_{ij}\}$.  
\end{notation}
\begin{theorem}\label{cech lift class} For each vector field $\xi$ on $M$ 
\begin{enumerate}[(1)] \item The set of isomorphism classes of objects $\pi_0(\L_{g_{ijk}})$ has exactly one element.
\item For any object $\tx=(\xi,\{f_{ij}\})$, the map which to a smooth function $h:M\to i\R$ associates the automorphism of $\tx$ given by $\{u_i=h|_{U_i}\}$ determines an isomorphism of groups 
\begin{equation} C_M^{\infty}(i\R)\tocong \Aut(\tx).\end{equation}
\end{enumerate}
\end{theorem}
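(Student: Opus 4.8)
The plan is to recognize both assertions as Cech-cohomological facts about the fine sheaf $\ul{i\R}_M$ of smooth imaginary-valued functions, and to carry them out by explicit partition-of-unity arguments so the proof stays self-contained. Throughout I fix $\xi$ and work inside the subcategory $\L_{g_{ijk}}(\xi)$, so that by construction all morphisms in sight preserve $\xi$; in particular ``$\pi_0(\L_{g_{ijk}})$ has exactly one element'' should be read as the assertion that the objects lifting a fixed $\xi$ form a single isomorphism class.

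For part (1) I would establish existence and uniqueness of the isomorphism class separately. For existence (nonemptiness), I first observe that $\{\iota_{\xi}d\log(g_{ijk})\}$ is a Cech $2$-cocycle: applying the group homomorphism $\iota_{\xi}d\log$ to the cocycle relation (\ref{g cocycle}) for $\{g_{ijk}\}$ turns it into the statement that the alternating sum of the $\iota_{\xi}d\log(g_{ijk})$ vanishes on $4$-fold overlaps. Since $\ul{i\R}_M$ admits partitions of unity its positive-degree Cech cohomology vanishes, so this $2$-cocycle is a coboundary $\{f_{jk}-f_{ik}+f_{ij}\}$, and the resulting $\{f_{ij}\}$ is an object of $\L_{g_{ijk}}(\xi)$ by (\ref{lift delta}). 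For uniqueness, given two objects $(\xi,\{f_{ij}\})$ and $(\xi,\{f'_{ij}\})$ I set $d_{ij}=f_{ij}-f'_{ij}$; subtracting the two instances of (\ref{lift delta}) cancels the common right-hand side and leaves $d_{jk}-d_{ik}+d_{ij}=0$, so $\{d_{ij}\}$ is a $1$-cocycle. By (\ref{cech morphism}) a morphism between the two objects is exactly a $0$-cochain $\{u_i\}$ with $u_j-u_i=d_{ij}$, i.e. a trivialization of $\{d_{ij}\}$, which again exists by the vanishing of $H^1$. Concretely, for a partition of unity $\{\rho_k\}$ subordinate to $\{U_i\}$ I would set $u_i=\sum_k \rho_k d_{ki}$ (each term extended by zero off $U_{ik}$) and check, using the cocycle identity together with the antisymmetry $d_{ji}=-d_{ij}$, that $u_j-u_i=d_{ij}$. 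Thus any two lifts of $\xi$ are isomorphic.

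For part (2) I would first unwind the notion of automorphism. Setting $f'=f$ in (\ref{cech morphism}) reduces the morphism condition to $u_j-u_i=0$ on every $U_{ij}$, so an automorphism of $\tx$ is precisely a collection of local functions $\{u_i\}$ agreeing on overlaps, equivalently a single global function $h\in C_M^{\infty}(i\R)$ with $u_i=h|_{U_i}$. This already exhibits the map $h\mapsto\{h|_{U_i}\}$ as a bijection: it is injective because $\{U_i\}$ covers $M$, and surjective by gluing. To see it is a group isomorphism I would compute composition in $\L_{g_{ijk}}$: adding the defining relations of two composable morphisms shows that composition corresponds to pointwise addition of the associated $0$-cochains, with the zero cochain as identity. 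Hence $\Aut(\tx)$ is abelian under addition and $h\mapsto\{h|_{U_i}\}$ carries the additive group $C_M^{\infty}(i\R)$ isomorphically onto it.

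The only step with genuine content --- and therefore the main, if mild, obstacle --- is the vanishing of the relevant Cech cohomology of $\ul{i\R}_M$ used twice in part (1); everything else is bookkeeping with the defining relations (\ref{lift delta}) and (\ref{cech morphism}). Rather than cite this vanishing I would prefer to produce the cochains explicitly from a partition of unity, both to keep the argument elementary and because the formula $u_i=\sum_k\rho_k d_{ki}$ makes the sign conventions in (\ref{cech morphism}) transparent. The one place demanding care is matching these signs: the antisymmetry $d_{ji}=-d_{ij}$ (obtained by specializing indices in the cocycle identity) is exactly what is needed for the partition-of-unity cochain to trivialize $\{d_{ij}\}$ with the correct orientation, and I would verify this specialization before running the computation.
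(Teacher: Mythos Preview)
Your proposal is correct and follows essentially the same route as the paper: both arguments identify $\{\iota_{\xi}d\log(g_{ijk})\}$ as a Cech 2-cocycle in the fine sheaf $\ul{i\R}_M$, use vanishing of its Cech cohomology to produce and compare trivializations, and read off $\Aut(\tx)\cong C^{\infty}_M(i\R)$ from the condition $u_i=u_j$ on overlaps. The only difference is cosmetic: you write out the partition-of-unity cochain $u_i=\sum_k\rho_k d_{ki}$ explicitly, whereas the paper simply cites fineness of $\ul{i\R}_M$ (and goodness of the cover) and moves on.
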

\begin{proof}  
Consider $h_{ijk}=d\log(g_{ijk}):=g^{-1}_{ijk}dg_{ijk}$; this is a Cech cocycle with values in the sheaf $\ul{i\R}_M$.  By definition an element of $\L_{g_{ijk}}(\xi)$ is a trivialization of this cocycle, i.e. a Cech cochain $\{f_{ij}\}$ such that 
\begin{equation} (\delta f)_{ijk}=h_{ijk}.\end{equation}  Since  $\ul{i\R}_M$ is a fine sheaf (i.e. admits partitions of unity), the cohomology class $[h_{ijk}]\in H^3(M,\ul{i\R})$ is necessarily zero, and since $\{U_i\}$ is a good open cover we can find such a trivialization $\{f_{ij}\}$.  Thus, for each vector field $\xi$ $L_{g_{ijk}}(\xi)$ is non-empty.  On the other hand, given another lift $\{f'_{ij}\}$, it follows that $\{f_{ij}-f'_{ij}\}$ is closed, and therefore also exact.  Thus we can find a Cech cochain $\{u_i\}$ such that for each $i,j$ we have 
\begin{equation} f_{ij}-f'_{ij}=u_j-u_i.\end{equation} Thus any two lifts are isomorphic.  

Given an element $\tx=(\xi,\{f_{ij}\})\in \L(\xi)$, if $\{u_i\}$ is an automorphism of $\tx$ than by equation (\ref{cech morphism}) we must have $u_j=u_i$ on $U_i\cap U_j$, and therefore there is a global function $f$ with $u_i=f|_{U_i}$; conversely any such function gives an automorphism of $\tx$.
\end{proof}

We saw above that every element of $\L_{\CC}$ (non-canonically) gives rise to an element of $\L_{g_{ijk}}$.  This is completely analogous to the description of a principal $\T$-bundle in terms of local sections and transition functions.  On the other hand, given a $\T$-valued Cech cocycle $g_{ij}$ we can produce a $\T$-bundle by gluing trivial bundles.  There is a similar procedure in our situation for producing lifts out of Cech data. 
\begin{proposition}  \label{cech equiv} Let $\CC$ be a DD gerbe over a manifold $M$, and let $\{\{U_i\},\{Q_i\},\{s_{ij}\}\}$ be a collection of local trivializations for $\CC$ with associated Cech cocycle $g_{ijk}$.  Then there is an equivalence of categories $\L_{g_{ijk}}\to \L_{\CC}$.
\end{proposition}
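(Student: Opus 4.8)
The plan is to construct a functor $\Psi:\L_{g_{ijk}}\to \L_{\CC}$ explicitly, and then verify that it is both essentially surjective and fully faithful, appealing to Theorem~(\ref{cech lift class}) and the gluing property of stacks. The key input is the gluing/descent property of $\textrm{B}(\ul{i\R}_M)$: a $1$-morphism $\CC\to \textrm{B}(\ul{i\R}_M)$ intertwining $-\iota_\xi d\log$ can be assembled from local $1$-morphisms defined over the $U_i$ together with $2$-morphism gluing data on the overlaps $U_{ij}$ satisfying a cocycle condition on $U_{ijk}$. First I would, given an object $(\xi,\{f_{ij}\})\in\L_{g_{ijk}}$, build the lift $\tx=\Psi(\xi,\{f_{ij}\})$ as follows. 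Over each $U_i$, the local trivialization $Q_i$ identifies $\CC|_{U_i}$ with $\textrm{B}(\ul{\T}_{U_i})$ via $P\mapsto \ul{\Hom}(Q_i,P)$, and I use the trivial lift of Definition~(\ref{trivial lift}) on this trivialized piece to define a local $1$-morphism $F_i:\CC|_{U_i}\to\textrm{B}(\ul{i\R}_{U_i})$ intertwining $-\iota_\xi d\log$. The functions $f_{ij}$ furnish the gluing $2$-morphisms between $F_i$ and $F_j$ on $U_{ij}$ (concretely, multiplication by $f_{ij}$), and the cocycle condition (\ref{lift delta}) is exactly what guarantees these $2$-morphisms agree on triple overlaps, so that by descent they glue to a global lift $\tx\in\L_{\CC}(\xi)$.

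Next I would define $\Psi$ on morphisms. A morphism $\{u_i\}:(\xi,\{f_{ij}\})\to(\xi,\{f'_{ij}\})$ in $\L_{g_{ijk}}$ consists of functions $u_i:U_i\to i\R$ satisfying (\ref{cech morphism}); I interpret each $u_i$ as a local $2$-morphism $F_i\Rightarrow F'_i$, and (\ref{cech morphism}) is precisely the compatibility needed for these to glue into a global $2$-morphism (equivalence) $T:F_{\tx}\Rightarrow F_{\tx'}$. Functoriality of $\Psi$ then follows since vertical composition of these $2$-morphisms corresponds to addition of the $u_i$. The main work is to check that $\Psi$ is an equivalence. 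For essential surjectivity I would run the discussion preceding Definition~(\ref{Cech lift data}) in reverse: any $\tx\in\L_{\CC}(\xi)$ admits a collection of local trivializations $r_i\in F_{\tx}(Q_i)(U_i)$, yielding $f_{ij}$ via (\ref{lift Cech}) which satisfy (\ref{lift delta}), and one verifies $\Psi(\xi,\{f_{ij}\})\cong \tx$ directly from this local data, using Remark~(\ref{morphism torsor}) to see that the $r_i$ identify the glued morphism with $F_{\tx}$.

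For full faithfulness, I would show that on each hom-set the map $\Hom_{\L_{g_{ijk}}}\bigl((\xi,\{f_{ij}\}),(\xi,\{f'_{ij}\})\bigr)\to \Hom_{\L_{\CC}}(\tx,\tx')$ is a bijection. Injectivity is immediate since distinct tuples $\{u_i\}$ give distinct $2$-morphisms on the trivialized pieces. Surjectivity is the crux: given a $2$-morphism $T:F_{\tx}\Rightarrow F_{\tx'}$, evaluate it on the chosen sections $r_i$ to recover functions $u_i$ (as in the paragraph before Definition~(\ref{Cech lift data})), and check that naturality of $T$ forces (\ref{cech morphism}). The hard part will be the bookkeeping in essential surjectivity, namely verifying rigorously that the local trivial lifts $F_i$ genuinely intertwine $-\iota_\xi d\log$ and that the $2$-morphisms built from $\{f_{ij}\}$ satisfy the coherence/cocycle conditions required by the descent property for morphisms into $\textrm{B}(\ul{i\R}_M)$; once the gluing is set up correctly, the cocycle identities (\ref{lift delta}) and (\ref{cech morphism}) match the descent conditions on the nose, so the equivalence follows formally, but confirming that match demands care with the coherence data for stacks recalled in appendix B.
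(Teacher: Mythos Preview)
Your proposal is correct and follows essentially the same route as the paper's proof in Appendix~B: build the lift locally from the trivial lift on each $U_i$ via $P\mapsto \iota_{\xi}d\log[\ul{\Hom}(P,Q_i)]$, glue using the $f_{ij}$, and verify essential surjectivity and full faithfulness from the Cech description. The only difference is packaging---the paper glues the torsors $F_{\tx}(P)$ object by object (and works with $\CC^{op}$ for sign convenience), while you invoke descent for 1-morphisms of stacks in one stroke; unfolding your descent step recovers exactly the paper's construction.
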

 We defer the proof to appendix $B$.  To see a similar argument to that used there, we direct the reader to the proof of proposition (5.3.2) in \cite{Br1}.

\begin{corollary}\label{lift class} For each vector field $\xi$ on $M$
\begin{enumerate} \item The set of isomorphism classes of objects $\pi_0(L_{\CC})$ has exactly one element.
\item For any object $\tx\in \L_{\CC}(\xi)$, the group of automorphisms of $\tx$ is isomorphic to $C^{\infty}_M(i\R)$.  Given $f\in C^{\infty}_M(i\R)$, the associated automorphism $\tau$ of $\tx$ is given by 
\begin{equation} (\tau)_Q=\alpha^{-1}_Q(f|_U):F_{\tx}(Q)\to F_{\tx}(Q) \end{equation} for each $Q\in \CC(U)$, where $\alpha_Q:\ul{\T}_U\to \ul{\textrm{Aut}}(Q)$ is as in definition (\ref{def gerbe}) (see also definition (\ref{2-morphism}).
\end{enumerate}
\end{corollary}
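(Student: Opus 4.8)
The plan is to deduce both assertions from Theorem \ref{cech lift class} by transporting them across the equivalence of categories $\Psi:\L_{g_{ijk}}\to\L_{\CC}$ supplied by Proposition \ref{cech equiv}. The first thing I would check is that $\Psi$ is compatible with the projection to the space of vector fields on $M$: an object of $\L_{g_{ijk}}$ of the form $(\xi,\{f_{ij}\})$ is built (in the construction deferred to appendix B) into a lift of the \emph{same} vector field $\xi$, so $\Psi$ restricts to an equivalence $\L_{g_{ijk}}(\xi)\to\L_{\CC}(\xi)$ for each fixed $\xi$. Since any equivalence of categories induces a bijection on isomorphism classes of objects and a group isomorphism $\Aut(\tx)\tocong\Aut(\Psi(\tx))$ for every object $\tx$, the two assertions of Theorem \ref{cech lift class} should carry over essentially for free.

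Granting this, part (1) is immediate: $\pi_0(\L_{\CC}(\xi))$ is in bijection with $\pi_0(\L_{g_{ijk}}(\xi))$, which has a single element by Theorem \ref{cech lift class}(1). Likewise, the abstract isomorphism $\Aut(\tx)\cong C^{\infty}_M(i\R)$ in part (2) follows from the corresponding statement in Theorem \ref{cech lift class}(2). The only content not handled by pure transport is the explicit formula $(\tau)_Q=\alpha^{-1}_Q(f|_U)$ for the automorphism attached to $f\in C^{\infty}_M(i\R)$, so I would prove this piece directly rather than chase $f$ through $\Psi$.

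For the explicit formula I would unwind the definition of a modification. An automorphism $\tau$ of $\tx$ is a modification $F_{\tx}\Rightarrow F_{\tx}$, i.e.\ for each $U$ and each $Q\in\CC(U)$ an automorphism $(\tau)_Q$ of the $\ul{i\R}_U$-torsor $F_{\tx}(Q)$, natural in $Q$ and compatible with restriction. Automorphisms of an $\ul{i\R}_U$-torsor are exactly the translations, so $(\tau)_Q$ is recorded by a section $c_Q\in\ul{i\R}_U(U)$, which under the band isomorphism of the trivial gerbe $\textrm{B}(\ul{i\R}_M)$ (see Example \ref{trivial gerbe}) is precisely the statement $(\tau)_Q=\alpha^{-1}_Q(c_Q)$. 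Naturality of $\tau$ with respect to each $F_{\tx}(\psi)$ for $\psi:Q\to Q'$, together with the fact that $F_{\tx}(\psi)$ is itself a torsor homomorphism and that translations commute with such maps, forces $c_Q=c_{Q'}$ wherever both are defined; condition (2) of Definition \ref{def gerbe}, that any two objects of a gerbe are locally isomorphic, then shows the $c_Q$ agree on overlaps and patch to a single global $f\in C^{\infty}_M(i\R)$ with $c_Q=f|_U$. The converse — that every such $f$ yields a well-defined modification via this formula — is a routine check of the compatibility axioms.

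I expect the main obstacle to be exactly this gluing step: verifying that the locally defined translations $c_Q$ patch to one honest global function on $M$, rather than merely agreeing up to the automorphisms intrinsic to $\CC$. This is where naturality of the modification and local isomorphy of objects in $\CC$ must be used carefully, and it is the same phenomenon that made the $\Aut$-computation in Theorem \ref{cech lift class}(2) rely on the connectedness of the good cover.
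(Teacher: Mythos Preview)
Your approach is correct and matches the paper's: the corollary is stated without proof precisely because it follows by transporting Theorem~\ref{cech lift class} across the equivalence of Proposition~\ref{cech equiv}, exactly as you outline. Your additional direct verification of the explicit formula for $(\tau)_Q$ is a useful supplement, since the paper leaves that part implicit; your observation that the relevant band isomorphism is the one for $\textrm{B}(\ul{i\R}_M)$ at $F_{\tx}(Q)$ (rather than $\alpha_Q$ for $\CC$ as literally written) also clarifies what appears to be a notational slip in the statement.
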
 

\section{\label{local flows} Local flows and infinitesimal symmetries}

Let $\Phi$ be a 1-parameter group of diffeomorphisms of $M$ generated by a vector field $\xi$.  In appendix $A$ we describe the category $\L_{\CC}(\Phi)$ of lifts of $\Phi$ to $\CC$ and construct a differentiation functor 
\begin{equation} \L_{\CC}(\Phi)\to \L_{\CC}(\xi).\end{equation}
Roughly speaking, an element of $\L_{\CC}(\Phi)$ may be thought of as a flow on $\CC$, and the corresponding infinitesimal symmetry as a vector field.  In the case of manifolds, every vector field can be integrated locally to obtain a unique flow, and this establishes a bijective correspondence. Similarly, in this section we will establish an equivalence between local versions of the categories $\L_{\CC}(\Phi)$ and $\L_{\CC}(\xi)$.  
  
  To describe the local version of the category $\L_{\CC}(\Phi)$, it will be convenient to use employ the language of simplicial manifolds.  Given a vector field $\xi$ on $M$, for each $x$ in $M$ there exists an open set $U$ containing $x$, a positive number $\tilde{\epsilon}$, and a smooth map 
\begin{align} \Phi:U\times (-\tilde{\epsilon},\tilde{\epsilon}) & \to M \\
x, t & \mapsto \varphi_t(x),\notag\end{align}  such that for each $t$, $\varphi_t$ is a diffeomorphism from $U$ to $\varphi_t(U)$, and such that, for each $(x,t)\in U\times (-\tilde{\epsilon},\tilde{\epsilon})$, we have 
\begin{equation} \frac{d}{ds}|_{s=t}\varphi_s(x)=\xi_{\varphi_t(x)}.\end{equation} 
Next, we can choose a smaller open set $V\subset U$ and a smaller positive number $\epsilon<\tilde{\epsilon}$ such that, for each $t\in I=(-\epsilon,\epsilon)$ and each $x\in V$ we have $\varphi_t(x)\in U$.  For each $k\geq 0$, consider the subset of $V\times \R^k$ given by 
\begin{equation} U^k=\{(\varphi_{t_0}(x),t_1,\cdots, t_k): x \in V, \sum_{i=0}^k|t_i|<\epsilon \} .\end{equation}  Note that $U^k$ is open since it is a union of open sets of the form $\varphi_t(V)\times W_t$, where for each $t\in I$ we define 
\begin{equation} W_t=\{(t_1,\cdots t_k):\sum_{i=1}^k|t_i|< \epsilon-|t|\}.\end{equation}  The open sets $U^{\bullet}$ fit together into a simplicial manifold.  The boundary maps $p_j:U^k\to U^{k-1}$ for $j=0,1\cdots k$ are given by 
\begin{equation} p_0(x,t_1,\cdots,t_k)=(\varphi_{t_1}(x),t_2,\cdots,t_k), \end{equation} 
\begin{equation} p_i(x,t_1,\cdots,t_k)=(x,\cdots,t_i+t_{i+1},\cdots,t_k) \end{equation} for $i=1,\cdots k-1$, and 
\begin{equation} p_k(x,t_1,\cdots,t_k)=(x,t_1\cdots,t_{k-1}).\end{equation}  The degeneracy maps $s_i:U^k\to U^{k+1}$ for $i=0,\cdots k$ are given by 
\begin{equation} s_0(x,t_1,\cdots,t_k)=(x,0,t_1,\cdots,t_k)\end{equation} and 
\begin{equation} s_i(x,t_1,\cdots,t_k)=(x,t_1,\cdots,t_i,0,\cdots, t_k)\end{equation} for $i=1,\cdots k$.

It is easily checked that these satisfy the correct relations to define a simplicial manifold.% In particular, as maps from $U^2\to U^2$ we have 
%\begin{align} \label{simp relations} p_0s_0 & = s_0p_0 \\ p_1s_0 & = p_2s_0 = s_0p_1 \\ p_0s_1 & = p_1s_1 = s_0p_0 \\ p_2s_1 & = s_0p_1.\end{align}  

\begin{notation} Given a $\ul{\T}_{U^k}$-torsor $S$, we define a $\ul{\T}_{U^{k+1}}$-torsor\footnote{In this formula and the next we assume $k+1$ is even.  If $k+1$ is odd, the last term is instead $p_{k+1}^*P^{\vee}$, respectively $p_{k+1}^*g^{-1}$.} 
\begin{equation} \delta S=p_0^*S\otimes p_1^*S^{\vee}\otimes\cdots\otimes p_{k+1}^*S.\end{equation}  Similarly, given a function $g:U^k\to \T$, define $\delta g:U^{k+1}\to \T$ by 
\begin{equation} (p_0^*g)(p_1^*g^{-1})\cdots (p_{k+1}^*g).\end{equation}
It them follows from (\ref{simp relations}) that we have natural isomorphisms 
\begin{equation} s_0^*\delta S\cong p_0^*s_0^*S \end{equation} and 
\begin{equation} s_1^*\delta S \cong p_1^*s_0^*S \end{equation} over $U^1$.

\end{notation}

Any DD gerbe is locally isomorphic to the trivial gerbe with band $\ul{\T}$, and therefore without loss of generality we will restrict ourselves to the case that $\CC$ is the trivial gerbe in the following.  The following definition is a local version of an equivariant gerbe (for the group $\R$), see for example \cite{Mein}, \cite{G2}, \cite{Br2}.
\begin{definition} \label{local lift cat} Let $U^{\bullet}$ be the simplicial manifold described above.  A local lift of $\Phi$ to the trivial gerbe consists of a $\ul{\T}_{U^1}$-torsor $S$ over $U^1$, together with a section $e$ of $s_0^*S$ over $U^0$ and a section $\sigma$ of $\delta E$ over $U^2$ satisfying the following conditions: 
\begin{enumerate}[(i)] \item $s_0^*\sigma=p_0^*e$ and $s_1^*\sigma=p_1^*e$,
\item $\delta \sigma$ is equal to the canonical section of $\delta^2S$ over $U^3$.
\end{enumerate}  Given a pair of local lifts $\hat{\Phi}=(S,e,\sigma)$ and $\hat{\Phi}'=(S',e',\sigma')$, an isomorphism from $\hat{\Phi}$ to $\hat{\Phi}'$ is an isomorphism of torsors $\Psi:S\tocong S'$ compatible with the sections $e,e',\sigma,\sigma'$.  We will denote the corresponding category $\L(\Phi)$.
\end{definition}   
\begin{proposition}\label{local lift class}  $\L(\Phi)$ has a single isomorphism class of objects. 
 \end{proposition}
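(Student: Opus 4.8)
The plan is to show that $\L(\Phi)$ has a single isomorphism class by an obstruction-theoretic argument entirely parallel to the proof of Theorem \ref{cech lift class}, but carried out over the simplicial manifold $U^{\bullet}$ rather than over a Cech nerve. First I would observe that the data of a local lift $(S,e,\sigma)$ is precisely a trivialization-up-to-coherent-data of a simplicial gerbe structure on the trivial gerbe, and that the relevant cohomology in which the obstruction lives is computed by the total complex associated to $U^{\bullet}$ with coefficients in $\ul{\T}$ (equivalently, after applying $d\log$, in the fine sheaf $\ul{i\R}$). Since $\CC$ is assumed trivial, I would first produce at least one object to show non-emptiness: take $S$ to be the trivial torsor $\ul{\T}_{U^1}$, let $e$ be the canonical section of $s_0^*S$, and exhibit a section $\sigma$ of $\delta S$ over $U^2$ satisfying condition (i) and the cocycle condition (ii). The existence of such $\sigma$ reduces to the vanishing of a class in $H^2$ of the simplicial complex, which vanishes because each $U^k$ is contractible onto $V$ along the flow (the simplicial manifold is built so that $U^0 = V$ is the deformation retract), so the relevant sheaf cohomology is concentrated in degree $0$.

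Next I would prove that any two objects are isomorphic. Given $\hat\Phi = (S,e,\sigma)$ and $\hat\Phi' = (S',e',\sigma')$, I would form the ``difference'' torsor $S \boxminus S'$ (in the notation of Example \ref{ass tor ex}), or more precisely work with $S \otimes (S')^{\vee}$, whose defining data is a $\delta$-closed element representing the difference of the two coherence data $\sigma,\sigma'$ subject to the boundary conditions imposed by $e,e'$. An isomorphism $\Psi:S\tocong S'$ compatible with all the sections is exactly a trivialization of this difference cocycle with prescribed restrictions under $s_0^*$ and $s_1^*$. As in Theorem \ref{cech lift class}, because the coefficient sheaf is fine and the simplicial manifold is built from contractible pieces, the obstruction to finding $\Psi$ lives in a cohomology group that vanishes, and the boundary conditions (i) can be matched because $e,e'$ are sections over $U^0$ where everything is determined. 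Thus $\Psi$ exists, establishing that $\pi_0(\L(\Phi))$ is a single point.

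The main obstacle I anticipate is bookkeeping: verifying that the boundary conditions (i) relating $\sigma$ to $e$ via $s_0^*,s_1^*$ are consistent with, and do not over-determine, the trivialization $\Psi$. Concretely, one must check that a trivialization of the difference torsor $S\otimes(S')^{\vee}$ can be chosen compatibly both with $\sigma,\sigma'$ over $U^2$ \emph{and} with $e,e'$ over $U^0$, which requires the simplicial identities $p_0 s_0 = p_1 s_0 = \mathrm{id}$ and the natural isomorphisms $s_0^*\delta S \cong p_0^* s_0^* S$, $s_1^* \delta S \cong p_1^* s_0^* S$ recorded in the Notation preceding the definition. I expect the cleanest route is to first solve the problem over $U^1$ (choosing $\Psi$ to match $e,e'$ under $s_0^*$), then verify that the resulting candidate automatically respects the $U^2$-data because both $\sigma,\sigma'$ satisfy condition (ii), reducing the remaining check to a statement about a closed form being determined by its value on the degeneracy image. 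The contractibility of the $U^k$ and the fineness of $\ul{i\R}$ are the two inputs that make every potential obstruction vanish, exactly mirroring the role they play in Theorem \ref{cech lift class}.
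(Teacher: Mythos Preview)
Your proposal has a genuine gap at the key step. You correctly reduce the problem to a cohomological obstruction living in the simplicial complex
\[
C^{\infty}(U^0;i\R)\xrightarrow{\delta}C^{\infty}(U^1;i\R)\xrightarrow{\delta}C^{\infty}(U^2;i\R)\xrightarrow{\delta}\cdots,
\]
but your stated reasons for its vanishing---fineness of $\ul{i\R}$ and contractibility of the individual $U^k$---do not suffice. Fineness kills \emph{sheaf} cohomology on each fixed $U^k$; it says nothing about the $\delta$-direction of the simplicial complex. Likewise, contractibility of the levels is irrelevant: the nerve of a discrete group acting freely on a point has contractible levels and nontrivial simplicial cohomology. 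What is actually needed is acyclicity in the simplicial direction, and that comes from the contractibility of the \emph{parameter direction} $\R$, not of the $U^k$. Establishing this requires producing a contracting homotopy for the $\delta$-complex, and there is no soft argument for this; one has to write it down.

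The paper's proof does exactly this. After extending $e$ to a section $\tau$ of $S$ and writing $\sigma=\delta\tau\cdot e^f$ with $f:U^2\to i\R$ (the logarithm exists because $U^2$ retracts onto the degeneracy locus where $g=1$), one must solve $\delta h=f$ subject to $h(x,0)=0$. The paper differentiates the cocycle identity $\delta f=0$ with respect to the last time variable at $0$ to obtain $k(x,t)=\partial_u|_{u=0}f(x,t,u)$, and then sets $h(x,t)=-\int_0^t k(x,s)\,ds$; a direct calculation using the fundamental theorem of calculus verifies $\delta h=f$. This integration \emph{is} the contracting homotopy your abstract argument needs, and the boundary condition $h(x,0)=0$ comes for free from the integral. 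Your anticipated ``bookkeeping'' difficulty with the degeneracy conditions is thus not a side issue but the heart of the matter: matching those conditions is precisely what forces one to construct $h$ explicitly rather than invoke a vanishing theorem.
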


 \begin{proof}  Note that there is a distinguished element $\hat{\Phi}_0\in \L(\Phi)$, which we call the \emph{trivial lift}.  Namely, we take $S$ to be the trivial $\ul{\T}_{U^1}$-torsor, and $e$ and $\sigma$ to be the trivial sections.  We will show that an arbitrary element $(S,e,\sigma)\in \L(\Phi)$ is isomorphic to $\hat{\Phi}_0$.  Note that $s_0^*S$ can be identified with the restriction of $S$ to $U^0\times \{0\}\subset U^1$.  We therefore begin by extending the section $e$ to a smooth section $\tau$ of $S$.  We can then define a smooth function $g:U^2\to \T$ by the formula 
 \begin{equation} \sigma =\delta\tau \cdot g.\end{equation}  Actually, it will be more convenient to work with logarithm of $g$; this is well-defined since $U^2$ deformation retracts onto $U^0\times \{(0,0)\}$ and since by condition (i) in definition (\ref{local lift cat}) we have $g(x,0,0)=1$.  Thus, we let $f:U^2\to i\R$ be the unique function such that $f(x,0,0)=0$ and $g=e^f$.  Condition (i) in definition (\ref{local lift cat}) is equivalent to the condition that, for all $(x,t)\in U^1$ we have 
 \begin{equation} f(x,t,0)=f(x,0,t)=0, \end{equation} and by condition (ii) we have $\delta f(x,t,t',t'')=$
 \begin{equation} \label{f cocy}f(\varphi_t(x),t',t'')-f(x,t+t',t'')+f(x,t,t'+t'')-f(x,t,t')=0.\end{equation}  Furthermore, it is easy to see that specifying an isomorphism from $\hat{\Phi}_0$ to $(S,e,\sigma)$ is equivalent to giving a smooth function $h:U^1\to i\R$ such that $\delta h=f$, i.e. for each $(x,t,t')\in U^2$, $h$ satisfies
 \begin{equation} h(\varphi_t(x),t')-h(x,t+t')+h(x,t)=f(x,t,t'),\end{equation} and such that $h(x,0)=0$ for all $x$.  Define a function $k:U^1\to i\R$ by the equation 
 \begin{equation} k(x,t)=\frac{\partial}{\partial u}|_{u=0}f(x,t,u).\end{equation}  Differentiating equation (\ref{f cocy}) with respect to $t''$ at $t''=0$ we obtain the relation
 \begin{equation} k(x,t+t')-k(\varphi_t(x),t')=\frac{\partial}{\partial u}|_{u=t'}f(x,t,u).\end{equation}  If we then define 
 \begin{equation} h(x,t)=-\int_0^t k(x,s)ds, \end{equation} we have 
 \begin{align} & h(\varphi_t(x),t')-h(x,t+t')+h(x,t) \\ 
  = &-\int_0^{t'}k(\varphi_t(x),s)ds+\int_0^{t+t'}k(x,s)ds-\int_0^tk(x,s)ds \\
  = &-\int_0^{t'}k(\varphi_t(x),s)ds+\int_t^{t+t'}k(x,s)ds \\ 
  = &\int_0^{t'}[k(x,t+s)-k(\varphi_t(x),s)]ds \\
  = &\int_0^{t'}\frac{\partial}{\partial u}|_{u=s}f(x,t,u)ds \\
  = & f(x,t,t')-f(x,t,0)=f(x,t,t').\end{align}
 \end{proof}
 
   We now define a concrete local version of the differentiation functor (\ref{def diff fun}) considered in appendix A.  Given a vector field $\xi$ on $U^0$, let $\L(\xi)$ denote $\L_{\textrm{B}(\ul{\T}_{U^0})}(\xi)$.  Then we will construct a functor 
   \begin{equation} D:\L(\Phi)\to \L(\xi).\end{equation}

 It follows  from the results of \S\ref{Cech} that every lift $\tx$ of $\xi$ to $\textrm{B}(\ul{\T}_{U^0})$ is determined up to canonical isomorphism by its action on the trivial $\ul{\T}_{U^0}$-torsor.  Put differently, there is a functor
 \begin{align} \L(\xi) & \to \tor_{\ul{i\R}_{U^0}} \\ \tx & \mapsto F_{\tx}(\ul{\T}_{U^0}),\notag \end{align}  and using proposition (\ref{lift class}) one can show this is an equivalence of categories.  For example, under this isomorphism the trivial lift discussed in example (\ref{trivial inf lift}) is sent to $\iota_{\xi}d\log[\ul{\T}_{U^0}]$, which is canonically isomorphic to the trivial $\ul{i\R}_{U^0}$-torsor.  For simplicity we will therefore take the codomain of the differentiation functor to be the category $\tor_{\ul{i\R}_{U^0}}$.

\begin{definition} \label{local diff fun}\begin{equation} D:\L(\Phi) \to \tor_{\ul{i\R}_{U^0}}\end{equation} is the functor taking $(S,e,\sigma)$ to the $\ul{i\R}_{U^0}$-torsor
 \begin{equation} s_0^*(\iota_{\frac{d}{dt}}d\log[S]).\end{equation} \end{definition}

  \begin{theorem}\label{diff equiv} $D$ is an equivalence of categories.
 \end{theorem}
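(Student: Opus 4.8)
The plan is to show that $D$ is fully faithful and essentially surjective, exploiting the fact that both its source and target are \emph{connected} groupoids, i.e.\ groupoids with a single isomorphism class of objects. For $\L(\Phi)$ this is exactly proposition (\ref{local lift class}). For $\tor_{\ul{i\R}_{U^0}}$ it holds because $\ul{i\R}_{U^0}$ is a fine sheaf, so every $\ul{i\R}_{U^0}$-torsor admits a global section and is therefore trivial. Since $D$ applied to the trivial lift $\hat{\Phi}_0$ of definition (\ref{local lift cat}) is the trivial $\ul{i\R}_{U^0}$-torsor, essential surjectivity is immediate. A functor between connected groupoids that is essentially surjective is an equivalence precisely when it induces a group isomorphism on the automorphisms of a single chosen object; so the whole theorem reduces to analyzing the homomorphism
\begin{equation} D:\End(\hat{\Phi}_0)\to \End(D(\hat{\Phi}_0))=C^{\infty}(U^0,i\R).\end{equation}

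Next I would identify $\End(\hat{\Phi}_0)$ concretely. An automorphism of the trivial torsor $S=\ul{\T}_{U^1}$ is multiplication by a smooth function $g:U^1\to \T$. Unwinding the compatibility of such an automorphism with the structure sections $e$ and $\sigma$ in definition (\ref{local lift cat}), using the formulas for the face maps $p_j$ and the degeneracy $s_0$, shows that $\End(\hat{\Phi}_0)$ is exactly the group of $g$ satisfying $g(x,0)=1$ (compatibility with $e$, i.e.\ $s_0^*g=1$) together with the flow-cocycle identity
\begin{equation}\label{flowcocy} g(x,t+t')=g(\varphi_t(x),t')\,g(x,t) \end{equation}
on $U^2$ (compatibility with $\sigma$, i.e.\ $\delta g=1$). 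By example (\ref{ex tor T sheaf}) and the functoriality of the associated-torsor construction (corollary (\ref{ass tor fun})), the functor $\iota_{\frac{d}{dt}}d\log[\cdot]$ carries this automorphism to addition by $\iota_{\frac{d}{dt}}(g^{-1}dg)$, and applying $s_0^*$ restricts along $x\mapsto(x,0)$. Since $g(x,0)=1$, this yields the explicit formula
\begin{equation} D(g)(x)=\frac{\partial}{\partial t}\Big|_{t=0}g(x,t), \end{equation}
which is manifestly a homomorphism from the (multiplicative) group $\End(\hat{\Phi}_0)$ to the additive group $C^{\infty}(U^0,i\R)$.

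To prove injectivity I would differentiate (\ref{flowcocy}) in $t'$ at $t'=0$, obtaining the ordinary differential equation
\begin{equation} \frac{\partial}{\partial t}g(x,t)=D(g)(\varphi_t(x))\,g(x,t),\qquad g(x,0)=1. \end{equation}
If $D(g)=0$ this forces $\partial_t g\equiv 0$, hence $g\equiv 1$, so $D$ is injective on automorphisms. For surjectivity, given any $k\in C^{\infty}(U^0,i\R)$ I would solve the same equation explicitly by setting
\begin{equation} g(x,t)=\exp\Big(\int_0^t k(\varphi_s(x))\,ds\Big), \end{equation}
which is well defined on $U^1$ because the flow $\varphi_s$ is defined for the relevant range of times by the construction of $U^{\bullet}$. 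A direct check gives $g(x,0)=1$ and $D(g)=k$, while the cocycle identity (\ref{flowcocy}) follows from additivity of the integral along the flow, using $\varphi_s(\varphi_t(x))=\varphi_{s+t}(x)$ and the substitution $u=s+t$. Hence $D$ is a group isomorphism on automorphisms, which by the reduction above completes the proof.

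The step I expect to be the main obstacle is the second one: correctly translating the compatibility of an automorphism with the structure sections $e$ and $\sigma$, through the simplicial face and degeneracy maps, into the clean conditions $g(x,0)=1$ and (\ref{flowcocy}). Once $\End(\hat{\Phi}_0)$ is pinned down, the ODE argument and the integral computation are entirely routine.
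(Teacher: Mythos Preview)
Your proposal is correct and follows essentially the same route as the paper: both reduce to showing that $D$ induces an isomorphism $\Aut(\hat{\Phi}_0)\to C^\infty(U^0,i\R)$, identify $\Aut(\hat{\Phi}_0)$ with the set of $g:U^1\to\T$ satisfying $s_0^*g=1$ and $\delta g=1$, and then invert $D$ via the explicit formula $g(x,t)=\exp\bigl(\int_0^t k(\varphi_s(x))\,ds\bigr)$. The only cosmetic difference is that the paper first passes to the logarithm $f=\log g$ (legitimate since $g(x,0)=1$ and $U^1$ is simply connected in the $t$-direction) and works additively, whereas you phrase the same computation as an ODE for $g$ directly.
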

 \begin{proof}  We proceed to by showing that $D$ is both essentially surjective and fully faithful. Since $\tor_{\ul{i\R}_{U^0}}$ has a single isomorphism class of objects, $D$ is trivially essentially surjective.  Furthermore, since $\L(\Phi)$ has a single isomorphism class of objects, to check that $D$ is fully faithful it is sufficient to check that $D$ induces an isomorphism of groups 
 \begin{equation} \Aut(\hat{\Phi}_0)\to \Aut(D(\hat{\Phi}_0)),\end{equation}  where $\hat{\Phi}_0$ is the trivial lift.

 Define the set \begin{equation}\label{ T coc}  Z_{\T}:=\{g:U^1\to \T:\delta g=1, s_0^*g=1\}.\end{equation}  There is an isomorphism 
 \begin{equation} Z_{\T}\tocong \Aut(\hat{\Phi}_0) \end{equation} sending each function $g$ to the bundle automorphism of $\ul{\T}_{U^1}$ given by right multiplication.  Define 
 \begin{equation} D_{\T}:Z_{\T}\to C^{\infty}(U^0;i\R)\end{equation} by 
 \begin{equation} g\mapsto \iota_{\frac{d}{dt}}d\log(g)|_{U^0}.\end{equation}  Then the definition of the functor $D$ implies that there is a commutative diagram
 \begin{equation} \xymatrix{ Z \ar[d]_{D_{\T}} \ar[r]^-{\cong} & \Aut(\hat{\Phi}_0) \ar[d]^D \\ C^{\infty}(U^0;i\R) \ar[r]^-{\cong} & \Aut(D(\hat{\Phi}_0))}\end{equation}  Since both horizontal arrows are isomorphisms, to finish the proof we must show that $D_{\T}$ is an isomorphism.  To see this, note that for any $g\in Z$, the condition $s^*g=1$ can be written 
 \begin{equation} g(x,0)=1\end{equation} for every $x\in U^0$.  Therefore there exists a unique $f:U^1 \to i\R$ such that 
 \begin{equation} g(x,t)=e^{f(x,t)} \end{equation} and 
 \begin{equation} f(x,0)=0.\end{equation}  The condition $\delta g=1$ then implies that for each $(x,t,t')\in U^1$ we have 
 \begin{equation} f(\varphi_t(x),t')-f(x,t+t')+f(x,t)=0.\end{equation}  Differentiating with respect to $t'$ we obtain 
 \begin{equation} \frac{\partial f}{\partial s}|_{s=0}f(\varphi_t(x),s)=\frac{\partial f}{\partial s}|_{s=t}f(x,s). \end{equation}  Define 
 \begin{equation}\label{diff inverse} h(x)= D_{\T}(g)(x)=\frac{\partial f}{\partial s}|_{s=0}f(\varphi_t(x),s).\end{equation}  By the fundamental theorem of calculus we have 
 \begin{equation} f(x,t)=\int_0^th(\varphi_s(x))ds, \end{equation} so that $g$ is completely determined by $D_{\T}(g)=h$.  Conversely, given an arbitrary function $h:U^0\to i\R$, the function on $U^1$ defined by 
 \begin{equation} g(x,t)=e^{\int_0^th(\varphi_s(x))ds}\end{equation} is in $Z_{\T}$.

\end{proof}

\section{\label{operations}Operations on lifts}

Given diffeomorphisms $\varphi,\psi:M\to M$, suppose that $\tilde{\varphi}$ $\tilde{\psi}$ are lifts of these symmetries to $\CC$.  Then we can compose $\tilde{\varphi}$ and $\tilde{\psi}$ to obtain a symmetry of $\CC$ covering $\varphi\psi:M\to M$.  In this way the category of symmetries of $\CC$ obtains a structure analogous to that of a group.  Infinitesimally, this group structure gives rise to a bracket operation on the infinitesimal symmetries of $\CC$.  In this section we give a direct definition of this bracket, together with other structures on $\L_{\CC}$ analogous to  those possessed by a Lie algebra.  

\begin{proposition} \label{Lie algebra structure}\begin{enumerate}[(I)]\item There exists a functor $\bp:\L_{\CC}\times \L_{\CC}\to \L_{\CC}$ such that, if $\tx,\te$ are lifts of vector fields $\xi,\eta$ to $\CC$, then $\tx\bp\te$ is a lift of $\xi+\eta$ satisfying 
\begin{equation} F_{\tx\bp\te}(Q)=F_{\tx}(Q)\bp F_{\te}(Q)\end{equation} for each object $Q\in \CC(U)$.
\item For each real number $\lambda\in \R$ there is a functor $\lambda\odot:\L_{\CC}\to \L_{\CC}$ such that, if $\tx$ is a lift of $\xi$, then $\lambda[\xi]$ is a lift of $\lambda\xi$ satisfying 
\begin{equation} F_{\lambda\odot\tx}(Q)=\lambda\odot (F_{\tx}(Q)),\end{equation}.
\item There is a functor $[\cdot,\cdot]:\L_{\CC}\times\L_{\CC}\to \L_{\CC}$ such that, if $\tx,\te$ are lifts of $\xi,\eta$, then $[\tx,\te]$ is a lift of $[\xi,\eta]$ satisfying 
\begin{equation} F_{[\tx,\te]}(Q)=\xi[F_{\te}(Q)]\boxminus\eta[F_{\tx}(Q)].\end{equation} 
\item There exists a canonical lift $\zeta$ of the zero vector field to $\CC$ such that for each open set $U\subset M$, $F_{\zeta,U}:\CC(U)\to \BiR(U)$ is the constant functor sending every object to the sheaf of groups $\ul{i\R}_U$.  We call this the \emph{zero lift}.
\end{enumerate}
\end{proposition}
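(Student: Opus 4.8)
The plan is to assemble all four operations out of the functorial associated-torsor and associated-gerbe machinery of \S\ref{torsors} and \S\ref{gerbes}, so that each is manifestly a functor and the stated object-level formulas hold by construction; the only real content is to verify that each resulting $1$-morphism $\CC\to\BiR$ intertwines the correct homomorphism $-\iota_{(\cdot)}d\log$, thereby landing in the appropriate component $\L_{\CC}(\cdot)$. In each case I would define the operation on objects $Q\in\CC(U)$ by the displayed formula, supply the restriction-compatibility (pseudo-natural) data via functoriality of the underlying torsor operations of example (\ref{ass tor ex}) and corollary (\ref{ass tor fun}) --- equivalently, by exhibiting the operation as a composite of the associated-gerbe morphisms of proposition (\ref{ass gerbe coh}) --- and then obtain functoriality in $2$-morphisms from composition and whiskering in the $2$-category of proposition (\ref{presheaf 2-category}).

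Parts (I), (II), and (IV) are formal. For (I), I would set $F_{\tx\bp\te}(Q)=F_{\tx}(Q)\bp F_{\te}(Q)$, realized as the composite of $(F_{\tx},F_{\te})$ with the associated-gerbe morphism attached to the addition homomorphism $\ul{i\R}_M\times\ul{i\R}_M\to\ul{i\R}_M$; since $F_{\tx}$ and $F_{\te}$ intertwine $-\iota_{\xi}d\log$ and $-\iota_{\eta}d\log$, the sum intertwines $g\mapsto-\iota_{\xi}d\log(g)-\iota_{\eta}d\log(g)=-\iota_{\xi+\eta}d\log(g)$ by linearity of contraction, so $\tx\bp\te\in\L_{\CC}(\xi+\eta)$. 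Part (II) is identical, composing $F_{\tx}$ with the associated-gerbe morphism $\lambda\odot[\cdot]$ for $\lambda\colon\ul{i\R}_M\to\ul{i\R}_M$; the result intertwines $\lambda\circ(-\iota_{\xi}d\log)=-\iota_{\lambda\xi}d\log$ because $\lambda\iota_{\xi}=\iota_{\lambda\xi}$. For (IV) I would define $F_{\zeta}$ directly as the constant functor sending every object to the trivial torsor $\ul{i\R}_U$ and every morphism to the identity, with identity restriction data; it intertwines the zero homomorphism $-\iota_{0}d\log=0$ since every automorphism is sent to $0\in\ul{i\R}_U$, so $F_{\zeta}\in\L_{\CC}(0)$.

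The main obstacle is part (III). I would define $[\tx,\te]$ by $F_{[\tx,\te]}(Q)=\xi[F_{\te}(Q)]\boxminus\eta[F_{\tx}(Q)]$, realized as $\big(\xi[\cdot]\circ F_{\te}\big)\boxminus\big(\eta[\cdot]\circ F_{\tx}\big)$, where $\xi[\cdot],\eta[\cdot]\colon\BiR\to\BiR$ are the associated-gerbe morphisms of proposition (\ref{ass gerbe coh}) attached to the directional-derivative homomorphisms $f\mapsto\xi(f)$ and $f\mapsto\eta(f)$, and $\boxminus$ is the $\Hom$ operation of example (\ref{ass tor ex}). The delicate step is the intertwining computation. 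Pushing the automorphism $\alpha_Q(g)$ through $F_{\tx}$ gives translation by the function $-\iota_{\xi}d\log g$ on $F_{\tx}(Q)$, and applying $\eta[\cdot]$ --- which, by the functoriality of corollary (\ref{ass tor fun}), sends translation by a section $c$ to translation by $\eta(c)$ --- yields translation by $-\eta(\iota_{\xi}d\log g)$ on $\eta[F_{\tx}(Q)]$; similarly one gets $-\xi(\iota_{\eta}d\log g)$ on $\xi[F_{\te}(Q)]$. Since the automorphism induced on a $\Hom$-torsor $T\boxminus S$ is the automorphism on $T$ minus that on $S$, the induced automorphism of $F_{[\tx,\te]}(Q)$ is $\eta(\iota_{\xi}d\log g)-\xi(\iota_{\eta}d\log g)$.

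To conclude I would apply, to the $1$-form $\omega=d\log g=g^{-1}dg$, the standard identity $\iota_{[\xi,\eta]}\omega=\xi(\iota_{\eta}\omega)-\eta(\iota_{\xi}\omega)$, which holds for any \emph{closed} $1$-form (it follows from Cartan's formula together with $d\omega=0$, so that $\iota_{\eta}d\iota_{\xi}\omega=\eta(\iota_{\xi}\omega)$). Because $d\log g$ is closed, the induced automorphism equals exactly $-\iota_{[\xi,\eta]}d\log g$, confirming that $[\tx,\te]\in\L_{\CC}([\xi,\eta])$. This closedness of $d\log g$ is precisely what forces the bracket into the correct component, and pinning it down is the crux of the argument; the remaining coherence and $2$-functoriality claims then follow formally from the fact that every operation has been written as a composite of the functorial constructions of proposition (\ref{ass gerbe coh}) and corollary (\ref{ass tor fun}).
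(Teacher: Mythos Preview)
Your proposal is correct and follows essentially the same approach as the paper: define each operation as a composite of $F_{\tx}$ (and $F_{\te}$) with the associated-gerbe $1$-morphisms of proposition (\ref{ass gerbe coh}), and obtain functoriality on $2$-morphisms by horizontal composition. The paper treats parts (II) and (III) with a one-word ``similarly'' and omits (IV), whereas you spell out the key intertwining check for the bracket using the closedness of $d\log g$; this is exactly the computation that underlies the paper's Cech formula (\ref{cech bracket}), so your extra detail is faithful to the intended argument.
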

\begin{proof} Let $+:\ul{i\R}_M\times\ul{i\R}_M\to\ul{i\R}_M$ denote the addition homomorphism, and recall from proposition (\ref{ass gerbe coh}) that there exists a canonical 1-morphism of gerbes $\cdot\boxplus\cdot:\BiR\times\BiR \to \BiR$ intertwining $+$ and sending $R,S\in \tor_{\ul{i\R}(U)}$ to $R\bp S$ for each open set $U\subset M$.  Given lifts $\tx,\te$ of vector fields $\xi,\eta$ to $\CC$, we then define $\tx\bp\te$ to be the composition 
\begin{equation} \xymatrix{\CC \ar[r]^-{\tx\times\te} & \BiR\times \BiR \ar[r]^-{\cdot\bp\cdot} & \BiR.}\end{equation}  If $\psi:\tx\Rightarrow\tx'$ and $\varphi:\te\Rightarrow\te'$ are equivalences of lifts, we define $\psi\bp\varphi:\tx\bp\te\to \tx'\bp\te'$ to be the horizontal composition of the identity 2-morphism $\cdot\boxplus\cdot\Rightarrow \cdot\boxplus\cdot $ with the 2-morphism $\psi\times\varphi: \tx\times\te\Rightarrow\tx'\times\te'$.  The functors $\lambda[\cdot]$ and $[\cdot,\cdot]$ are defined similarly.

We omit the proof of (IV), which is straightforward.
\end{proof}

 One can further show that the operations above satisfy the axioms of a Lie algebra up to natural isomorphism.  For example, given lifts $\tx,\te,$ and $\check{\tau}$ there is a natural isomorphism 
\begin{equation} (\tx\bp\te)\bp\hat{\tau}\tocong \tx\bp(\te\bp\hat{\tau}).\end{equation}  Furthermore, these natural isomorphisms themselves satisfy various coherence conditions, similar to (but much more elaborate than) those satisfied by the associator in a monoidal category.  Roughly speaking, the category $\L_{\CC}$ has the structure of a ``Lie algebra object in the 2-category of categories."  Rather than give a precise definition of such an algebraic structure (which so far as we know does not exist in the literature), we will instead work in the Cech picture, where the relevant algebraic structure can be described using $L_{\infty}$-algebras.  

Thus, let us choose a collection of local trivializations $\{\{U_i\},\{Q_i\},\{s_{ij}\}\}$ for $\CC$ with corresponding Cech data $\{g_{ijk}\}$. Given lifts $\tx,\te$ of vector fields $\xi,\eta$ to $\CC$, let $\{r^{\tx}_i\},\{r^{\te}_i\}$ be local sections for these lifts, and let $\{f^{\tx}_{ij}\},\{f^{\te}_{ij}\}$ be the corresponding Cech data defined in equation (\ref{lift Cech}).  Then we obtain local sections \begin{equation} r_i^{\lambda\odot\tx}=\lambda\odot r_i^{\tx},\end{equation} \begin{equation}r_i^{\tx\bp\te}=r_i^{\tx}\bp r_i^{\te},\end{equation} and  \begin{equation}r_i^{[\tx,\te]}=\xi[r^{\te}_i]\boxminus\eta[r^{\tx}]\end{equation} of the lifts $\lambda\odot\tx$, $\tx\bp\te$, and $[\tx,\te]$, respectively.  The corresponding Cech data is given by 
\begin{equation} f_{ij}^{\lambda\odot\tx}=\lambda f^{\tx}_{ij},\end{equation} \begin{equation} f_{ij}^{\tx\bp\te}=f_{ij}^{\tx}+f_{ij}^{\te},\end{equation} and 
\begin{equation} \label{cech bracket} f_{ij}^{[\tx,\te]}=\xi(f_{ij}^{\te})-\eta(f_{ij}^{\tx}).\end{equation} 

We now show that these operations give the category $\L_{g_{ijk}}$ the structure of a (strict) Lie 2-algebra, or equivalently of a 2-term $L_{\infty}$-algebra. For a discussion of Lie 2-algebras and 2-term $L_{\infty}$-algebras,  we direct the reader to \cite{BC}.
\begin{definition} A \emph{2-term $L_{\infty}$ algebra} is a 2-term chain complex of vector spaces $\xymatrix@C5mm{ V_1 \ar[r]^{d} & V_0}$ equipped with: 
\begin{enumerate} 
\item an antisymmetric chain map $[\cdot,\cdot]:V\otimes V\to V$, 
\item an antisymmetric chain homotopy $J :V\otimes V\otimes V\to V$ from the chain map 
\begin{align} V\otimes V\otimes V &\to V \\ x\otimes y\otimes z &\mapsto [x,[y,z]] \notag\end{align} to the chain map \begin{align} V\otimes V\otimes V &\to V \\ x\otimes y\otimes z &\mapsto [[x,y],z]+[y,[x,z]], \notag\end{align} such that the following equation holds for each $x,y,z,w\in V$: 
\begin{align} \label{jacobiator identity}[x,J(y,z,w)]+J(x,[y,z],w)+J(x,z,[y,w])+[J(x,y,z),w] \\ +[z,J(x,y,w)] = J(x,y,[z,w])+J([x,y],z,w)\notag\\+[y,J(x,z,w)]+J(y,[x,z],w)+J(y,z,[x,w]). \notag\end{align} 
\end{enumerate}
\end{definition}
\begin{rem}  In the language of Lie 2-algebras, the equation (\ref{jacobiator identity}) is the \emph{Jacobiator identity}, which is an analogue of the pentagon identity for monoidal categories.
\end{rem} 

\begin{theorem} \label{non-conn L inf} Let $V_0=\textrm{Obj}(\L_{g_{ijk}})$, with vector space structure given by 
\begin{enumerate} \item $\lambda(\xi,\{f^{\tx}_{ij}\})=(\lambda\xi,\{\lambda f^{\tx}_{ij}\})$ for each $\lambda\in \R$ and $(\xi,\{f^{\tx}_{ij}\})\in \L_{g_{ijk}}$, and \item $(\xi,\{f^{\tx}_{ij}\})+(\eta,\{f^{\te}_{ij}\})=(\xi+\eta,\{f^{\tx}_{ij}+f^{\te}_{ij}\})$ for each $(\xi,\{f^{\tx}_{ij}\}),(\eta,\{f^{\te}_{ij}\})\in \L_{g_{ijk}}$.\end{enumerate}  Let $V_1=\{\{u_i:U_i\to i\R\}\}$ with vector space structure given by addition and scalar multiplication of functions.  Then $V=V_0\oplus V_1$ has the structure of a 2-term $L_{\infty}$-algebra with 
\begin{enumerate}
\item $d:V_1\to V_0$ given by 
$ \{u_i\}\mapsto (0,\{u_i-u_j\})$ for each $\{u_i\}\in V_1$
\item $[\cdot,\cdot]:L\otimes L\to L$ given by 
\begin{enumerate} \item $[(\xi,\{f^{\tx}_{ij}\}),(\eta,\{f^{\te}_{ij}\})]=([\xi,\eta],\{\xi(f_{ij}^{\te})-\eta(f_{ij}^{\tx})\})$ for each $(\xi,\{f^{\tx}_{ij}\}),(\te,\{f^{\eta}_{ij}\})\in V_0$ \item $[(\xi,\{f^{\tx}_{ij}\}),\{u_i\}]=\{\xi(u_i)\}=-[\{u_i\},(\xi\{f^{\tx}_{ij}\})]$ for each $(\xi\{f^{\tx}_{ij}\})\in V_0$, $\{u_i\}\in V_1$ \item $[\{u_i\},\{v_i\}]=0$ for each $\{u_i\},\{v_i\}\in V_1$.\end{enumerate} \item $J=0$.
\end{enumerate}  
\end{theorem}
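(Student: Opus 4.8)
The plan is to exploit the fact that, since the proposed Jacobiator $J$ vanishes, the claimed structure is simply a \emph{strict} Lie $2$-algebra: a chain complex $V_1 \xrightarrow{d} V_0$ equipped with an antisymmetric bracket that is a chain map and satisfies the graded Jacobi identity on the nose. I would therefore reduce the theorem to four verifications: (a) $d$ and the three cases of $[\cdot,\cdot]$ are well defined, with outputs in the stated spaces; (b) the bracket is antisymmetric; (c) the bracket is compatible with $d$, i.e.\ is a chain map; and (d) the strict Jacobi identity $[x,[y,z]] = [[x,y],z] + [y,[x,z]]$ holds for all $x,y,z\in V$. Once (a)--(d) hold, the chain-homotopy condition is satisfied by $J=0$ precisely because (d) makes the two chain maps in the definition equal, and the Jacobiator identity (\ref{jacobiator identity}) holds trivially since every one of its terms carries a factor of $J$.

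The only content in (a) is that outputs land in $V_0$. For $d$, the element $d\{u_i\} = (0,\{u_i-u_j\})$ satisfies the defining relation (\ref{lift delta}) for $\xi=0$ because $\delta(u_i-u_j)_{ijk}=0$ identically. For the bracket of $x=(\xi,\{f^{\tx}_{ij}\})$ and $y=(\eta,\{f^{\te}_{ij}\})$, I must check that $\{\xi(f^{\te}_{ij})-\eta(f^{\tx}_{ij})\}$ obeys (\ref{lift delta}) with respect to $[\xi,\eta]$; this is the assertion, already motivated in equation (\ref{cech bracket}) and proposition (\ref{Lie algebra structure}), that $[\tx,\te]$ genuinely lifts $[\xi,\eta]$. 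Applying $\delta$, using that $\xi$ acts as a derivation and hence commutes with $\delta$, and invoking the cocycle conditions $\delta f^{\tx}=\iota_\xi d\log g_{ijk}$ and $\delta f^{\te}=\iota_\eta d\log g_{ijk}$, reduces the check to the pointwise identity
\[
\iota_{[\xi,\eta]}\omega = \xi(\iota_\eta\omega)-\eta(\iota_\xi\omega), \qquad \omega = d\log g_{ijk}.
\]
Since each $d\log g_{ijk}=g_{ijk}^{-1}dg_{ijk}$ is a \emph{closed} $1$-form, this is exactly the Cartan identity $\iota_{[\xi,\eta]}=\mathcal{L}_\xi\iota_\eta-\iota_\eta\mathcal{L}_\xi$ specialized to closed forms. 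Antisymmetry (b) is immediate, case (a) using $[\eta,\xi]=-[\xi,\eta]$ and cases (b),(c) being built in. For (c) the two nontrivial compatibilities are $d[x,\{u_i\}]=[x,d\{u_i\}]$, which both equal $(0,\{\xi(u_i-u_j)\})$, and the $V_1\otimes V_1$ case, where both sides vanish because $[d\{u_i\},\{v_i\}]$ and $[\{u_i\},d\{v_i\}]$ involve the zero vector field.

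The heart of the argument is the Jacobi identity (d), which I would organize by total degree. In degrees $3$ and $2$ both sides vanish, since any bracket of two elements of $V_1$ is zero by case (c) and $V_2=0$. In degree $1$, with one $V_1$-input and two $V_0$-inputs, expanding via cases (a) and (b) collapses the identity to the operator statement $[\xi,\eta](u_i)=\xi(\eta(u_i))-\eta(\xi(u_i))$ acting on the functions $u_i$, together with antisymmetry of the Lie bracket of vector fields. The essential case is degree $0$, where $x,y,z\in V_0$: the vector-field components obey the ordinary Jacobi identity, while for the \v{C}ech components one expands all three double brackets and collects terms proportional to $f^{\tx}$, $f^{\te}$, $f^{\tz}$ separately. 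In each group the second-order derivative terms recombine via $[\xi,\eta]=\xi\eta-\eta\xi$ into the expressions appearing in $[x,[y,z]]$; for example the $f^{\tx}$-terms of $[[x,y],z]+[y,[x,z]]$ reduce to $-[\eta,\zeta](f^{\tx}_{ij})$, matching the $f^{\tx}$-term of $[x,[y,z]]$. I expect this degree-$0$ cancellation, together with the well-definedness computation via the Cartan identity, to be the main (though entirely routine) obstacle; every remaining step follows formally once these two calculations are in hand.
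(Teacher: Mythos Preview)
Your proposal is correct and follows essentially the same approach as the paper: since $J=0$, one reduces to verifying that the bracket is an antisymmetric chain map satisfying the strict Jacobi identity, and both you and the paper carry out these checks by the same direct computations (chain-map compatibility via $d[v_0,v_1]=[v_0,dv_1]$ and $[dv_1,v_1']=[v_1,dv_1']$, Jacobi via the Lie-bracket identity for vector fields acting on functions). The main difference is that you include an explicit well-definedness step---verifying via the Cartan identity on the closed forms $d\log g_{ijk}$ that the bracket output satisfies (\ref{lift delta})---which the paper omits from the proof proper, having already recorded the Cech bracket formula (\ref{cech bracket}) as a consequence of proposition (\ref{Lie algebra structure}); your proof is thus slightly more self-contained.
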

\begin{proof}
To check that $[\cdot,\cdot]$ is a chain map, it is sufficient to check that for each $v_0=(\xi_i,\{f^{\tx}_{ij}\})\in V_0$ and $v_1=\{u_i\}$ and $v_1'=\{u_i'\}$ in $V_1$ that 
\begin{equation} \label{chain 1}d[v_0,v_1]=[v_0,dv_1], \end{equation} and 
\begin{equation}\label{chain 2}  [dv_1,v_1']=[v_1,dv_1'].\end{equation}  To verify (\ref{chain 1}), note that 
\begin{equation} d[(\xi,f_{ij}),\{u_i\}]=d\{\xi(u_i)\}=(0,\{\xi(u_i-u_j)\}),\end{equation} whereas 
\begin{equation} [(\xi,\{f_{ij}\}),d\{u_i\}]=[(\xi,\{f_{ij}\},(0,\{u_i-u_j\})]=(0,\{\xi(u_i-u_j)\}).\end{equation}  On the other hand, by inspection both the left and right-hand sides of equation (\ref{chain 2}) are zero.

To verify that $J$ is a chain homotopy from $[[x,y],z]\to [[x,y],z]+[y,[x,z]]$, there are two conditions to verify.  First, for each $u_0,v_0,w_0\in L_0$, we must check that 
\begin{equation} dJ(u_0,v_0,w_0)=-[[u_0,v_0],w_0]+[[u_0,w_0],v_0]+[u_0,[v_0,w_0]].\end{equation} The left-hand side is 0 by definition, whereas it is easy to verify that the right-hand side vanishes using the Jacobi identity for vector fields on $M$.  The second condition is that for $v_0,w_0\in V_0$ and $v_1\in V_1$, we have 
\begin{equation} J(dv_1,v_0,w_0)=-[[v_0,w_0],v_1]+[[v_0,v_1],w_0]+[v_0,[w_0,v_1]].\end{equation}  Again, the left-hand side is zero, and a simple computation shows that the right-hand side vanishes as well.

Finally, because $J=0$, the condition (\ref{jacobiator identity}) trivial.
\end{proof}

\section{\label{conn str and curving}Connective structures and curvings}
Given a Dixmier-Douady gerbe $\CC$ over a manifold $M$, Brylinski \cite[Br1] introduced the notions of a connective structure $\A$ on $\CC$ and a curving $K$ for $\A$.  We now explain how these structures emerge very naturally from the point of view of the infinitesimal symmetries of $\CC$ introduced in the previous sections.  For simplicity we begin by working in the Cech picture.
 
 Let  $\{\{U_i\},\{Q_i\},\{s_{ij}\}\}$ be such a collection of local trivializations with corresponding Cech data $\{g_{ijk}\}$, and let $\mathcal{L}_{g_{ijk}}$ be the corresonding category of lifts described in definition (\ref{Cech lift data}).  Since by definition the category $C^{\infty}(TM)$ has only trivial morphisms, we take a linear splitting of the projection $\pi$ to be a linear splitting of the sequence of vector spaces
 \begin{equation} \xymatrix{ 0 \ar[r] & \textrm{Ker}(\pi) \ar[r] & \textrm{Obj}(\mathcal{L}_{g_{ijk}}) \ar[r]^{\pi} & C^{\infty}(TM) \ar[r] & 0.}\end{equation}  Such a splitting can be obtained from a collection of 1-forms $\{A_{ij}\in T^*U_{ij}\}$ by setting $\{f_{ij}(\xi)=\iota_{\xi}A_{ij}\}$; the condition (\ref{lift delta}) is equivalent to the condition 
\begin{equation}\label{A cech} A_{jk}-A_{ik}+A_{ij}=d\log(g_{ijk}).\end{equation}  Such a collection of 1-forms is precisely the data needed to specify a connective structure on $\CC$ in the Cech picture \cite{H}.  Thus, given such a collection of 1-forms, we may define the \emph{horizontal lift} $\tx^h$ of a vector field $\xi$ to be given by the Cech data $\{f_{ij}^{\xi^h}=\iota_{\xi}A_{ij}\}$.

Next, let $\xi,\eta$ be a pair of vector fields on $M$.  Note that in general we have no natural way to compare the lifts $\widehat{[\xi,\eta]}^h$ and $[\tx^h,\te^h]$, whose Cech data are given respectively by 
\begin{equation}\label{bracket 1}f^{\widehat{[\xi,\eta]}^h}_{ij}=\iota_{[\xi,\eta]}A_{ij}\end{equation} and 
\begin{equation}\label{bracket 2}f^{[\tx^h,\te^h]}_{ij}=\xi\cdot(\iota_{\eta}A_{ij})-\eta\cdot(\iota_{\xi}A_{ij}).\end{equation}  
On the other hand, suppose that we are given the additional structure of a \emph{curving}: in the Cech picture this is a collection of 2-forms $\{B_i\in i\Omega^2(U_i)\}$ such that on overlaps we have 
\begin{equation}\label{cech curving} B_j-B_i=dA_{ij}.\end{equation}  A simple calculation shows that
\begin{equation} f^{[\tx^h,\te^h]}_{ij}-f^{\widehat{[\xi,\eta]}^h}_{ij}=\iota_{\eta}\iota_{\xi}dA_{ij},\end{equation} and it therefore follows that
\begin{equation}\label{bracket isomorphism} \{u_i=\iota_{\eta}\iota_{\xi}B_i\} \end{equation} defines a morphism in the Cech picture between the two lifts (\ref{bracket 1}) and (\ref{bracket 2}).

 With the above discussion as motivation, we recall the definition of a connective structure on $\CC$ in the language of sheaves of categories.  The following is a restatement of definition (5.31) from \cite{Br1} in the language of \S\ref{gerbes}. \begin{definition} \label{def conn str} A \emph{connective structure} on $\CC$ is a morphism of gerbes \begin{equation}\A:\CC\to \Bone\end{equation} intertwining the homomorphism 
 \begin{equation}\label{conn str int}-d\log:\underline{\T}_M\to \underline{i\Omega}^1_M.\end{equation} \end{definition} 
 \begin{notation}  For each object $Q$ of $\CC$ over $U$, $\A(Q)$ is a sheaf over $U$.  We often write $\mu\in A(Q)$ to denote that $\mu$ is a global section of $\A(Q)$; we call such a $\mu$ a \emph{connection} on $Q$.
 \end{notation}
 \begin{notation}  Given an inclusion of open sets $i:V\hookrightarrow U$, $P\in \CC(U)$ and $\mu\in \A(P)$, we obtain an element of $\A(i^*P)$ in two steps (see definition (\ref{stack morphism})): first we restrict $\mu$ to $V$ to obtain an element of the $\ul{i\Omega}^1_V$ torsor $i^*\A(P)$.  Then using the natural isomorphism 
 \begin{equation} i^*\A(P)\tocong \A(i^*P) \end{equation} (which is given as part of the data of the 1-morphism $\A$), we obtain an element of $\A(i^*P)$.  Using a slight abuse of notation,  we will denote this element either by $\mu|_V$ or $i^*\mu$.
 \end{notation}

 \begin{rem} \label{other band conn str}  Definition (\ref{def conn str}) can easily be generalized to any gerbe with band $\ul{A}_M$ for $A$ an abelian Lie group; the sheaf $\ul{i\Omega}^1_M$ is replaced by the sheaf of $\textrm{Lie}(A)$-valued 1-forms.  We will later use the case $A=i\R$, where we make the identification $\textrm{Lie}(i\R):=i\R$.
\end{rem}

%\begin{terminology} Given $P\in\CC(U)$, we will often refer to a global section $\mu\in\Gamma(\A(P))$ as a \emph{connection on $P$} or an \emph{element} of $\A(P)$ and write $\mu\in\A(P)$.  Note that such global sections always exist, and that a homomorphism of such torsors is completely described by its action on global sections.  Given $P,Q\in \CC(U)$ and $\psi:P\to Q$, we will often write the induced homomorphism $\A(\psi):\A(P)\to \A(Q)$ as $\psi_*$.  
%\end{terminology}     
% \end{equation} and for every inclusion $i:U\to V$, we obtain a natural isomorphism
%\begin{equation} (\A_i)_Q:\A_V(i^*Q)\cong i^*\A_U(Q)\end{equation} satisfying the associativity condition (\ref{2-morphisms ass}).
 \begin{ex}  Let $\BT$ denote the trivial DD gerbe over $M$ whose objects are $\ul{\T}_U$-torsors for open sets $U\subset M$.   We define the \emph{trivial connective structure} $\A^0_{\ul{\T}_M}$ on $\BT$ as the 1-morphism which assigns to each torsor its sheaf of connections.  We can rephrase this definition using the discussion of example (\ref{connection torsor}) as well as proposition (\ref{ass gerbe coh}).
Namely, the trivial connective structure on $\BT$ can be define as the 1-morphism of gerbes
\begin{equation} -d\log[\cdot]:\BT\to \Bone. \end{equation}

\end{ex}

We have the following definition of the horizontal lift in sheaf language.
\begin{definition} \label{horizontal} For each vector field $\xi$ on $M$, the \emph{horizontal lift} $\tx^h$ of $\xi$ to $\CC$ is the composition of 1-morphisms
\begin{equation}\xymatrix{\CC\ar[r]^-{\A} &\Bone \ar[r]^{(\iota_{\xi})[\cdot]} &\BiR\\ P \ar@{|->}[r] & \A(P) \ar@{|->}[r] & \iota_{\xi}[\A(P)].}\end{equation} 
\end{definition} In order to explain the connection between this definition and the definition of the horizontal lift in the Cech picture, let us return to the situation that we have a collection of local trivializations for $\CC$.   To express $\A$ in terms of Cech data, let us choose connections $\mu_i\in\A(Q_i)$.  If we define 1-forms $A_{ij}$ on overlaps by the equation 
\begin{equation}\label{conn str cech} \mu_j=(s_{ij})_*\mu_i+A_{ij},\end{equation} then (\ref{def conn str}) implies that on triple overlaps these 1-forms satisfy equation (\ref{A cech}).  Given a vector field $\xi$ on $M$, let $\tx^h=\iota_{\xi}d[\A]$ be the horizontal lift given in definition (\ref{horizontal}).  Then we obtain sections 
\begin{equation} r_i=\iota_{\xi}[\mu_i]\in F_{\tx^h}(Q_i),\end{equation} and a simple calculation shows that the corresponding Cech data is given by $\{\iota_{\xi}A_{ij}\}$.

Let us also recall Brylinski's definition of a \emph{curving} for $\A$.
\begin{definition}\label{def curving}\emph{(\cite{Br1}, def. 5.3.7)}  Let $\CC$ be a DD gerbe over a manifold $M$ equipped with a connective structure $\A$.  A \emph{curving} of the connective structure is a function which assigns to each object $P\in\CC(U)$ and each local section $\mu\in\A(P)(V)$ (for $V\subset U$ an open subset)  a 2-form $K(\mu)\in i\Omega^2(V)$, such that the following properties are satisfied:
\begin{enumerate}[(1)]
\item For each inclusion of open sets $i:W\hookrightarrow V$, each $P\in \CC(U)$ and $\mu\in \A(P)(V)$, we have 
\begin{equation} K(i^*\mu)=K(\mu)|_W\end{equation}
\item For each pair of objects $P,Q\in\CC(U)$, each morphism $\psi\in\ul{\Hom}(P,Q)(V)$, and each $\mu\in \A(P)(V)$, we have
\begin{equation} K(\psi_*\mu)=K(\mu)\end{equation}
\item For each object $P\in\CC(U)$, each $\mu\in \A(P)(V)$ and each 1-form $\alpha\in i\Omega^1(V)$, we have
\begin{equation} K(\mu+\alpha)=K(\mu)+d\alpha\end{equation}
\end{enumerate}
\end{definition}
\begin{rem}\label{curving restatement} Given an $\ul{i\Omega}^1_U$-torsor $A$, a map from $A$ to $\ul{i\Omega}^2_U$ intertwining $d:\ul{i\Omega}^1_U\to \ul{i\Omega}^2_U$ is equivalent to a isomorphism of $\ul{i\Omega}^2_U$-torsors $d[A]\tocong \ul{i\Omega}^2_U$.  Unravelling definition (\ref{2-morphism}), it follows that a curving for $\A$ is equivalent to a 2-morphism \begin{equation} d[\A]\tocong \ul{i\Omega}^2_M,\end{equation} where $\ul{i\Omega}^2_M$ denotes the trivial 1-morphism from $\CC$ to $\textrm{B}(\ul{i\Omega}^2_M)$ sending every object $Q\in \CC(U)$ to the trivial $\ul{i\Omega}^2_U$-torsor $\ul{i\Omega}^2_U$.
\end{rem}

\begin{ex} Let $\A^0_{\T}$ denote the trivial connective structure on the trivial DD gerbe $\ul{\textrm{B}\T}_M$.  Then the \emph{trivial curving} for $\A^0_{\T}$ assigns to each connection its curvature 2-form.
\end{ex}

To see how definition (\ref{def curving}) appears in the Cech picture, define 2-forms $B_i\in \ul{i\Omega}^2(U_i)$ by 
\begin{equation} B_i=K(\mu_i).\end{equation} It then follows from definition (\ref{def curving}) that on overlaps these satisfy equation 
(\ref{cech curving}). 

\begin{rem} Although we will not do this, it is possible to construct the natural isomorphism (\ref{bracket isomorphism}) in the sheaf language.
\end{rem}

Next, we recall the definition of the curvature 3-form associated to a curving.
\begin{definition}  Given a connective structure $\A$ on a gerbe $\CC$ and curving $K$ on a DD gerbe over a manifold $M$, the \emph{curvature} of $(\CC,\A,K)$ is the 3-form $C$ on $M$ defined locally by $C=dK(\mu)$, where $\mu$ is a section of $\A(Q)$ for $Q\in\CC(U)$.
\end{definition}

Recall that if $\Theta$ is a connection on a principal $\T$-bundle $E\to M$, then the curvature 2-form of $\Theta$ measures the failure of the splitting of the sequence (\ref{inf extension}) determined by $\Theta$ to be a homomorphism of Lie algebras.  To understand the analogous situation for gerbes, we need the notion of a homomorphism of 2-term $L_{\infty}$-algebras.

\begin{definition}\label{L-infty hom} \emph{(\cite{BC} def. 34)}  Let $V$ and $V'$ be 2-term $L_{\infty}$-algebras.  An \emph{$L_{\infty}$-homomorphism} $\phi:V\to V'$ consists of 
\begin{enumerate} \item a degree 0 chain map $\phi:V\to V'$,
\item an antisymmetric degree 1 chain map $\phi_2:V\otimes V\to V'$
\end{enumerate} such that the following equations hold:
\begin{enumerate}[(1)] \item $d(\phi_2(x,y))=\phi[x,y]-[\phi(x),\phi(y)]$ for all $x,y\in V$, and 
\item $[\phi_2(x,y),\phi(z)]+\phi_2([x,y],z)+\phi(J(x,y,z))=J(\phi(x),\phi(y),\phi(z))+[\phi(x),\phi_2(y,z)]+[\phi_2(x,z),\phi(y)]+\phi_2(x,[y,z])+\phi_2([x,z],y)$ for all $x,y,z\in V_0$.
\end{enumerate}
\end{definition}

%Choose a collection local trivializations $(\{U_i\},\{Q_i\},\{s_{ij}\})$ for $\CC$, and let $L$ denote the 2-term $L_{\infty}$-algebra described in proposition (\ref{non-conn L inf}).  Note that we can also consider $C^{\infty}(TM)$ equipped with the Lie bracket to be an $L_{\infty}$ algebra concentrated in degree $0$.  Let $\pi:L\to C^{\infty}(TM)$ denote the linear map sending each lift to its underlying vector field; this clearly defines an $L_{\infty}$-homomorphism with $\pi_2=0$, and we obtain an {\color{red}exact sequence of $L_{\infty}$-algebras}
%\begin{equation}\label{atiyah cech 2} \xymatrix{ 0 \ar[r] & C^{\infty}_M(i\R) \ar[r] & L \ar[r] & C^{\infty}(TM) \ar[r] & 0.}\end{equation}

We make $C^{\infty}(TM)$ into a 2-term $L_{\infty}$-algebra by setting $V_0=C^{\infty}(TM)$, $V_1=0$,  with bracket given by the Lie bracket.  We then can extend the linear map from $\textrm{Obj}(\L_{g_{ijk}})\to C^{\infty}(TM)$ in an obvious way to an $L_{\infty}$-homomorphism.
In the language of defintion (9.3) from \cite{R2}, we obtain a \emph{strict central extension of $L_{\infty}$-algebras} 
\begin{equation}\label{Linfty extension} \L_{g_{ijk}}(0)\to \L_{\{g_{ijk}\}}\to C^{\infty}(TM). \end{equation} One should compare this extension to (\ref{inf extension}), noting that the category $\L_{g_{ijk}}$ is naturally isomorphic to the category of principal $i\R$-bundles over $M$.

  Let us try to construct a splitting of the extension (\ref{Linfty extension}), i.e. an $L_{\infty}$-homomorphism $\phi:C^{\infty}(TM)\to \L_{g_{ijk}}$ such that $\pi\circ\phi$ is the identity on $C^{\infty}(TM)$.\footnote{More generally we would ask only for a chain homotopy from $\pi\circ \phi$ to the identity map.}  A degree zero chain map $\phi:C^{\infty}\to \L_{g_{ijk}}$ is simply a linear splitting of the projection $\textrm{Obj}(\L_{g_{ijk}})\to C^{\infty}(TM)$, which we have already seen is essentially equivalent to a connective structure $\{A_{ij}\}$ on $\CC$ in the Cech picture.  Similarly, it follows from the discussion at the beginning of this section that an antisymmetric degree 1 chain map $\phi_2:C^{\infty}(TM)\otimes C^{\infty}(TM)\to \L_{g_{ijk}}$ is essentially equivalent to a curving $\{B_i\}$.

Finally, since the Jacobiator maps $J$ vanish for both $C^{\infty}(M)$ and $L$, it follows from definition (\ref{L-infty hom}) that $(\phi,\phi_2)$ defines an $L_{\infty}$-homomorphism if and only if for each $\xi,\eta,\tau\in C^{\infty}(M)$ we have 
\begin{align} &[\phi_2(\xi,\eta),\phi(\tau)]+\phi_2([\xi,\eta],\tau)-[\phi(\xi),\phi_2(\eta,\tau)]\\&-[\phi_2(\xi,\tau),\phi(\eta)]-\phi_2(\xi,[\eta,\tau])=0.\notag
\end{align}  However, the left hand side of this equation is the element $\{u_i\}\in L^1$ given by 
\begin{align} u_i &= \tau\cdot B_i(\xi,\eta)-B_i([\xi,\eta],\tau)+\xi\cdot B_i(\eta,\tau)\\&-\eta\cdot   B_i(\xi,\tau)-B_i([\eta,\tau],\xi)+B([\xi,\tau],\eta)\notag \\ &=  C(\xi,\eta,\tau), \notag\end{align} where $C=dB_i$ is the curvature 3-form.  We therefore see that the curvature (or rather its de Rham cohomology class) is an obstruction to the existence of a splitting of (\ref{Linfty extension}) as Lie 2-algebras.

\section{\label{conn lifts}Infinitesimal symmetries of gerbes with connective structure}

In this section we will study symmetries of a gerbe $\CC$ equipped with a connective structure $\A$.  We begin by extending definition (\ref{def symmetry gerbe}) to take the connective structure into account.  The motivation for this definition can best be understood by studying the relationship to families of symmetries of gerbes with connective structure, as discussed in appendix $A$.  In \S\ref{local flows conn} we generalize the results in \S\ref{local flows} to establish an equivalence between a local version of the category of 1-parameter families of connective lifts and the category of infinitesimal connective lifts defined below.

If $(P,\Theta)$ and $(P',\Theta')$ are principal $\T$-bundles with connection over a manifold $M$, then given an isomorphism $\psi:P\to P'$ of the underlying principal bundles, we may ask whether $\psi$ is \emph{compatible} with the connections $\Theta,\Theta'$: concretely, $\Theta$ and $\Theta'$ are differential forms and we may compare $\Theta$ with $\psi^*\Theta'$.  By contrast, given Dixmier-Douady gerbes $(\CC,
A)$, $(\CC',\A')$ with connective structures and a 1-morphism $\Psi:\CC\to \CC'$, it is no longer the right question to ask whether $\A$ is \emph{equal} to $\Psi^*\A$.  Rather, we must extend $\Psi$ to a \emph{connective isomorphism} by specifying extra data in the form of an isomorphism $\A\tocong \Psi^*\A'$.  Recall from definition (\ref{def conn str}) that the connective structure $\A'$ is a 1-morphism of gerbes 
\begin{equation} \CC'\to \textrm{B}(\ul{i\Omega}^1_M); \end{equation} by definition the pull-back $\Psi^*(\A')$ is the composition 
\begin{equation} \A'\circ \Psi:\CC\to \textrm{B}(\ul{i\Omega}^1_M).\end{equation}  In this language an extension of $\Phi$ to a connective isomorphism is given by a 2-morphism 
\begin{equation} \A\toocong \A'\circ\Psi.\end{equation}  The analogous situation for infinitesimal symmetries is given in the following definition.
\begin{definition}\label{def conn lift}  Let $\CC$ be a DD gerbe over a manifold $M$ with connective structure $\A$.  Given $\xi\in C^{\infty}(TM)$, a \emph{connective lift} $\cx$ of $\xi$ to $\CC$ is a pair $\cx=(F_{\cx},\Theta_{\cx})$, where $F_{\cx}$ is a lift of $\xi$ to $\CC$ in the sense of definition (\ref{def symmetry gerbe}), and $\Theta_{\tx}:\pounds_{-\xi}[\A]\toocong \A^0_{\ul{i\R}}F_{\tx}$ is a 2-morphism.
\end{definition}
More explicitly, for every object $Q\in\CC(U)$ and every local section $\mu$ of $\A(Q)$, a connective lift determines a local connection $\Theta_{\cx}(\mu)$ on $F_{\cx}(Q)$.  Furthermore, this assignment is natural in $Q$ in the sense described in (\ref{2-morphism}), and for every 1-form $\alpha\in i\Omega^1(U)$ we have \begin{equation} \Theta_{\cx}(\mu+\alpha)=\Theta_{\cx}(\mu)-\pounds_{\xi}\alpha.\end{equation}

\begin{ex} \label{triv conn lift} Given a vector field $\xi$ on $M$, recall from example (\ref{trivial inf lift}) the trivial lift $\tx_0$ of $\tx$ to the trivial gerbe $\textrm{B}(\ul{\T}_M)$.  This lift assigns to every $\ul{\T}_U$-torsor $P$ the associated $\ul{i\R}_U$-torsor
\begin{equation} -\iota_{\xi}d\log[P].\end{equation} We wish to extend this to define a trival connective lift of $\xi$ to $\textrm{B}(\ul{\T}_M)$ equipped with the trivial connective structure $\A^0_{\ul{\T}_M}.$  Suppose $\Theta$ is a connection on a $\ul{\T}_M$-torsor $P$, then we will define a connection $-\pounds_{\xi}(\Theta)$ on $-\iota_{\xi}d\log[P]$ as follows.  Each local section of $-\iota_{\xi}d\log[P]$ is of the form 
\begin{equation} -\iota_{\xi}d\log[\sigma]+f,\end{equation} where $\sigma$ is a local section of $P$ and $f$ is a local $i\R$-valued function.  $-\pounds_{\xi}(\Theta)$ is then defined by the formula
\begin{equation} \label{der conn} A_{-\pounds_{\xi}(\Theta)}(-\iota_{\xi}d\log[\sigma]+f)=-\pounds_{\xi}A_{\Theta}(\sigma)+df.\end{equation}  Using the Cartan formula $\pounds_{\xi}=d\iota_{\xi}+\iota_{\xi}d$, one easily checks that equation (\ref{der conn}) is well-defined.

\begin{definition} The \emph{trivial connective lift} $\cx_0$ of $\xi$ to $(\textrm{B}(\ul{\T}_M),\A^0_{\ul{\T}_M})$ is the pair $(\tx_0,\Theta_{\cx_0})$, where $\tx_0$ is the trivial lift defined in (\ref{trivial inf lift}), and for each  $\ul{i\R}_U$-torsor $P$ we have 
\begin{align} (\Theta_{\xi})_P:\pounds_{-\xi}[\A^0_{\ul{\T}_M}(P)] & \to \A^0_{\ul{i\R}_M}\circ \tx_0(P) \\ \pounds_{-\xi}[\Theta] & \mapsto -\pounds_{\xi}(\Theta).\notag \end{align}
\end{definition}
\end{ex}

We next wish to define the category of connective lifts. 
\begin{definition} \label{conn equiv}Given connective lifts $\cx=(F_{\cx},\Theta_{\cx})$ and $\cx'=(F_{\cx'},\Theta_{\cx'})$ of a vector field $\xi$, an equivalence $T$ between the underlying (non-connective) lifts $F_{\cx},F_{\cx'}$ is an \emph{equivalence of connective lifts} if the following diagram commutes:
\begin{equation}\label{conn equiv cond} \xymatrix@C15mm{\A \ar@{=>}[d]_{\Theta_{\cx}} \ar@{=>}[dr]^{\Theta_{\cx'}} & \\ \A_0\circ F_{\cx} \ar@{=>}[r]_{1_{\A_0}* T}  & \A_0\circ F_{\cx'}.}\end{equation}
\end{definition}

\begin{rem} In more concrete terms, the condition (\ref{conn equiv cond}) says that for each $Q\in \CC(U)$, $\mu\in \A(Q)$ and each $\sigma\in F_{\cx}(Q)$, we have 
\begin{equation}\label{conn equiv crit} A_{\Theta_{\cx'(\mu)}}(T_Q(\sigma))=A_{\Theta_{\cx}(\mu)}(\sigma).\end{equation}  In particular, suppose $\cx=\cx'$, then by theorem (\ref{lift class}) $T$ is given by 
\begin{align} T_Q:F_{\cx}(Q) &\to  F_{\cx}(Q) \\ \sigma & \mapsto  \sigma +f|_U,\notag\end{align} for each $Q\in \CC(U)$, where $f:M\to i\R$ is a globally defined function.  Equation (\ref{conn equiv crit}) therefore implies that $T$ gives an automorphism of $\cx$ if and only if $f$ is locally constant.
\end{rem}

\begin{definition} $\check{\L}_{\CC}$ is the category whose objects are connective lifts and whose morphisms are connective equivalences of lifts.
\end{definition}
\begin{notation} For each vector field $\xi$ on $M$ $\check{L}_{\CC}(\xi)$ denotes the subcategory of connective lifts of $\xi$ to $\CC$.
\end{notation}

\begin{rem} \label{conn algebra cech} We remark that the construction of the functors $\bp,\lambda\odot$ and $[\cdot,\cdot]$ given in proposition (\ref{Lie algebra structure}) generalizes to the connective setting.  In particular, the bracket functor is extended explicitly as follows.  Let $\cx=(F_{\cx},\Theta_{\cx})$ and $\ce=(F_{\ce},\Theta_{\cx})$ be connective lifts of the vector fields $\xi,\eta$, respectively.  Then for each local section $\mu$ of $\A(P)$, $ \Theta_{[\cx,\ce]}(\mu)$ is the connection on $F_{[\cx,\ce]}(P)$ given by
\begin{equation} A_{{\Theta}_{[\tx,\te]}}:\xi[\sigma_1]\boxminus \eta[\sigma_2]\mapsto \pounds_{\xi}A_{{\Theta}_{\te}(\mu)}(\sigma_1)-\pounds_{\eta}A_{{\Theta}_{\tx}(\mu)}(\sigma_2) \end{equation} where $\sigma_1$ and $\sigma_2$ are local sections of $F_{\cx}(P)$ and $F_{\ce}$, respectively.  We will explain how the bracket appears in the Cech picture below.
\end{rem}
%\begin{proof}  By definition (\ref{triv conn str iR}),  $\Theta_{\cx}$ is a 2-morphism $\pounds_{\xi}[\A]\Rightarrow (-d)[F_{\cx}]$ and $\Theta_{\ce}$ with a 2-morphism $\pounds_{\eta}[\A]\Rightarrow (-d)[F_{\ce}]$.  By proposition (\ref{}), we have an invertible 2-morphism 
%\begin{equation} \pounds_{[\xi,\eta]}[\A]\Rightarrow \pounds_{\xi}[\pounds_{\eta}[\A]]\boxminus\pounds_{\eta}[\pounds_{\xi}[\A],\end{equation} as well as an invertible 2-morphism
%\begin{equation} \A^0(F_{[\cx,\ce]})=(-d)[\iota_{\xi}d[F_{\ce}]\boxminus \iota_{\eta}d[F_{\cx}]] \Rightarrow \pounds_{\xi}[(-d)[F_{\ce}]]\boxminus\pounds_{\eta}[(-d)[F_{\cx}]].\end{equation}  We can then define $\Theta_{[\cx,\ce]}$ using these 2-morphisms, together with the 2-morphism
 %$\pounds_{\xi}[\Theta_{\ce}]\boxminus\pounds_{\eta}[\Theta_{\cx}]$ from 
 %\begin{equation} \pounds_{\xi}[\pounds_{\eta}[\A]]\boxminus\pounds_{\eta}[\pounds_{\xi}[\A]\Rightarrow \pounds_{\xi}[(-d)[F_{\ce}]]\boxminus\pounds_{\eta}[(-d)[F_{\cx}]].\end{equation}
%\end{proof}

\subsection{\label{conn lifts cech}Connective lifts in the Cech picture}

Let us describe the category of connective lifts in the Cech picture. Let \begin{equation}\{\{U_i\},\{Q_i\},\{s_{ij}\},\{\mu_i\}\}\end{equation} be a collection of local trivializations of $(\CC,\A)$.  Recall that if we define 1-forms $A_{ij}$ on $U_{ij}$ by 
\begin{equation} \mu_j=(s_{ij})_*\mu_i-A_{ij},\end{equation} then these satisfy
\begin{equation} A_{jk}-A_{ik}+A_{ij}=d\log(g_{ijk})\end{equation} on triple overlaps.

Next, let $\check{\xi}=(F_{\cx},\Theta_{\cx})$ be a connective lift of $\xi$ to $(\CC,\A)$, and suppose we have chosen local sections $r_i\in F_{\check{\xi}}(Q_i)$.  Then for each $i$ we obtain a connection $\nu_i=\Theta_{\check{\xi}}(\mu_i)$ on $F_{\check{\xi}}(Q_i)$, and we can define 1-forms
\begin{equation} a_i=A_{\nu_i}(r_i)\in i\Omega^1_{U_i}.\end{equation} 
\begin{proposition} On overlaps $U_{ij}$, we have 
\begin{equation}\label{conn lift cech} a_j-a_i=df_{ij}+\pounds_{\xi}A_{ij}. \end{equation}
\end{proposition}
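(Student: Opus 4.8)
The plan is to prove the cocycle relation (\ref{conn lift cech}) by directly expanding $a_j = A_{\nu_j}(r_j)$ and reducing it to $a_i = A_{\nu_i}(r_i)$, using the three structural inputs attached to the data: the transition relation $\mu_j = (s_{ij})_*\mu_i - A_{ij}$ between the chosen connections, the Cech relation $r_j = F_{\cx}(s_{ij})(r_i) + f_{ij}$ of (\ref{lift Cech}) for the underlying lift, and the compatibility properties of the $2$-morphism $\Theta_{\cx}$ recorded in definition (\ref{def conn lift}).

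First I would rewrite $\nu_j = \Theta_{\cx}(\mu_j)$ by substituting $\mu_j = (s_{ij})_*\mu_i - A_{ij}$. Applying the additivity property $\Theta_{\cx}(\mu + \alpha) = \Theta_{\cx}(\mu) - \pounds_{\xi}\alpha$ from definition (\ref{def conn lift}) with $\alpha = -A_{ij}$ yields $\nu_j = \Theta_{\cx}((s_{ij})_*\mu_i) + \pounds_{\xi}A_{ij}$, where the final term is interpreted as shifting the connection by the $1$-form $\pounds_{\xi}A_{ij}$. Note that the two minus signs (the one in the additivity property and the one in the transition relation) conspire to leave a $+\pounds_{\xi}A_{ij}$, and tracking this carefully is essential.

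The key step, and the one I expect to be the main obstacle, is to control the term $\Theta_{\cx}((s_{ij})_*\mu_i)$ by invoking the naturality of the $2$-morphism $\Theta_{\cx}$ with respect to the isomorphism $s_{ij}: Q_i \to Q_j$. Naturality (as spelled out in definition (\ref{2-morphism})) says precisely that $\Theta_{\cx}$ intertwines the pushforward $(s_{ij})_*$ of connections in $\A$ with the pushforward $(F_{\cx}(s_{ij}))_*$ of connections on the $F_{\cx}$-images, so that $\Theta_{\cx}((s_{ij})_*\mu_i) = (F_{\cx}(s_{ij}))_*\nu_i$. The care here lies in unwinding the modification condition and correctly identifying the associated-torsor pushforward maps (and the embedded $\pounds_{-\xi}$) on each side; once this is done we obtain $\nu_j = (F_{\cx}(s_{ij}))_*\nu_i + \pounds_{\xi}A_{ij}$.

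Finally I would evaluate $a_j = A_{\nu_j}(r_j)$. The summand $\pounds_{\xi}A_{ij}$ contributes $\pounds_{\xi}A_{ij}$ directly to the connection form. Substituting $r_j = F_{\cx}(s_{ij})(r_i) + f_{ij}$ and applying the $\ul{i\R}$-analogue of the gauge formula (\ref{gauge}), namely $A_\Theta(\sigma + f) = A_\Theta(\sigma) + df$, splits off the term $df_{ij}$. The remaining term $A_{(F_{\cx}(s_{ij}))_*\nu_i}(F_{\cx}(s_{ij})(r_i))$ equals $A_{\nu_i}(r_i) = a_i$, since evaluating a pushed-forward connection on a pushed-forward section recovers the original pairing. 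Collecting the three contributions gives $a_j = a_i + df_{ij} + \pounds_{\xi}A_{ij}$, which is exactly (\ref{conn lift cech}).
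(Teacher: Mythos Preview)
Your proposal is correct and follows essentially the same route as the paper: both arguments hinge on the naturality of the $2$-morphism $\Theta_{\cx}$ with respect to $s_{ij}$ to identify $\Theta_{\cx}((s_{ij})_*\mu_i)$ with $(F_{\cx}(s_{ij}))_*\nu_i$, and then unwind $a_j=A_{\nu_j}(r_j)$ using the gauge formula and the additivity $\Theta_{\cx}(\mu+\alpha)=\Theta_{\cx}(\mu)-\pounds_{\xi}\alpha$. Your ordering (first split off the $-A_{ij}$ via additivity, then apply naturality) is a trivial rearrangement of the paper's (first apply naturality, then split off $A_{ij}$), and in fact your sign bookkeeping is more consistent with the convention $\mu_j=(s_{ij})_*\mu_i-A_{ij}$ stated just before the proposition than the paper's own computation, which slips a sign in its last line.
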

\begin{proof}
The naturality of of $\Theta_{\check{\xi}}$ implies that the diagram 
\begin{equation} \xymatrix{\A(Q_i) \ar[r]^{(\Theta_{\check{\xi}})_{Q_i}} \ar[d]_{\A(s_{ij})} & \A_0(F_{\check{\xi}}Q_i) \ar[d]^{\A_0(F_{\check{\xi}}(s_{ij}))} \\ \A(Q_j) \ar[r]_{(\Theta_{\check{\xi}})_{Q_j}} & \A_0(F_{\check{\xi}}Q_j) }\end{equation} commutes on each overlap $U_{ij}$; we therefore have 
\begin{equation} \A_0(F_{\cx}(s_{ij}))\Theta_{\cx}(\mu_i)=\Theta_{\cx}(\A(s_{ij})(\mu_i)).\end{equation}  The left-hand side of this equation is equal to 
\begin{equation} F_{\cx}(s_{ij})_*\nu_i,\end{equation} whereas the right-hand side  is 
\begin{equation} \Theta_{\cx}(\mu_j-A_{ij})=\nu_j+\pounds_{\xi}A_{ij}.\end{equation}  We therefore have 
\begin{align} a_j  = & A_{\nu_j}(r_j)  \\  = & A_{\nu_j}(F_{\cx}(s_{ij})r_i+f_{ij}) \notag\\  = & A_{\nu_j}(F_{\cx}(s_{ij})r_i)+df_{ij} \notag\\ = & A_{F_{\cx}(s_{ij})_*\nu_i-\pounds_{\xi}A_{ij}}(F_{\cx}(s_{ij})r_i)+df_{ij} \notag \\  = & A_{F_{\cx}(s_{ij})_*\nu_i}(F_{\cx}(s_{ij})r_i)-\pounds_{\xi}A_{ij}+df_{ij} \notag \\  = & A_{\nu_i}(r_i)+df_{ij}-\pounds_{\xi}A_{ij} \notag \\ 
 = & a_i+df_{ij}-\pounds_{\xi}A_{ij}.\end{align}

\end{proof} 

Next, let $\tau:(F_{\cx},\Theta_{\cx})\to (F_{\cx'},\Theta_{\cx'})$ be an equivalence of connective lifts, and let $\{r_i'\}$ be a collection of local trivializations of $F_{\cx'}$, with corresponding Cech data $\{a_i'\}$ and $\{f_{ij}'\}$.  Recall from \S\ref{Cech}, that if we define local $i\R$-valued functions $u_i$ by 
\begin{equation} r_i'=\tau_{Q_i}(r_i)+u_i,\end{equation} then these functions satisfy 
\begin{equation} f'_{ij}-f_{ij}=u_i-u_j.\end{equation}  Unravelling definition (\ref{conn equiv}), we see that 
\begin{equation} \nu_i'=\Theta_{\cx'}(\mu_i)=(\tau_{Q_i})_*\nu_i.\end{equation}  We therefore have 
\begin{align} a_i'  = & A_{\nu_i'}(r_i') \\  = & A_{(\tau_{Q_i})_*\nu_i}(\tau_{Q_i}(r_i)+u_i) \notag\\  = & A_{(\tau_{Q_i})_*\nu_i}(\tau_{Q_i}(r_i))+du_i\notag \\ = & A_{\nu_i}(r_i)+du_i \notag\\  = & a_i+du_i,\end{align} and therefore  $a'_i-a_i=du_i$ on overlaps.

\begin{definition} \label{cat conn lifts cech} Let $\{g_{ijk}:U_{ijk}\to \T\}$ be a Cech cocycle on a manifold $M$ with respect to a good open cover $\{U_i\}$, and let $\{A_{ij}\in i\Omega^1(U_{ij})\}$ be a collection of 1-forms satisfying equation (\ref{A cech}) on triple overlaps.  The category $\L_{(g_{ijk},A_{ij})}$ has as objects the set of triples $(\xi,\{f_{ij}\},\{a_i\})$, where the functions $f_{ij}$ satisfy equation (\ref{lift delta}), and $a_i\in i\Omega(U_i)$ is a collection of 1-forms satisfying equation (\ref{conn lift cech}).  A morphism from $(\xi,\{f^{\cx}_{ij}\},\{a^{\cx}_i\})\to (\xi,\{f^{\cx'}_{ij}\},\{a^{\cx'}_i\})$ is a collection of local functions $\{u_i:U_i\to i\R\}$ such that on each overlap $U_{ij}$ we have $f^{\cx'}_{ij}-f^{\cx}_{ij}=u_i-u_j$, and on each $U_i$ we have  $a_i^{\cx'}-a_i^{\cx}=du_i$.
\end{definition}

We can now generalize theorem (\ref{cech lift class}) to the connective case.
\begin{theorem}\label{cech conn lift class}  Let $\{g_{ijk}\}$ and $\{A_{ij}\}$ be as in definition (\ref{cat conn lifts cech}), and let 
\begin{equation} \pi:\L_{(g_{ijk},A_{ij})}\to \L_{g_{ijk}}\end{equation} be the obvious forgetful functor.  Then for each vector field $\xi$ on $M$ and each $\{f^{\cx}_{ij}\}\in \L_{g_{ijk}}(\xi)$, the set $\pi^{-1}(\{f^{\cx}_{ij}\})$ of connective lifts extending $\{f^{\cx}_{ij}\}$ is a torsor for global 1-forms on $M$, where the action of $\alpha\in i\Omega^1(M)$ on $\cx=(\xi,\{f^{\cx}_{ij}\},\{a_i\})$ is given by 
\begin{equation} \label{new conn lift} \cx+\alpha=(\xi,\{f^{\cx}_{ij}\},\{a_i+\alpha|_{U_i}\}).\end{equation}
\end{theorem}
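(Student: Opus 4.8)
The plan is to prove the two ingredients of a torsor structure separately: that the fiber $\pi^{-1}(\{f^{\cx}_{ij}\})$ is non-empty, and that the prescribed action of $i\Omega^1(M)$ on it is simply transitive. Since $\xi$ and the functions $\{f^{\cx}_{ij}\}$ are fixed and $\pi$ merely forgets the $1$-forms $\{a_i\}$, an element of $\pi^{-1}(\{f^{\cx}_{ij}\})$ is exactly a choice of a collection $\{a_i \in i\Omega^1(U_i)\}$ subject only to the constraint (\ref{conn lift cech}) on double overlaps. Writing $b_{ij}$ for the right-hand side of (\ref{conn lift cech}) (with the signs as in the derivation preceding definition (\ref{cat conn lifts cech})), the constraint reads $a_j - a_i = b_{ij}$; that is, $\{a_i\}$ is a Cech $0$-cochain in $\ul{i\Omega}^1_M$ whose coboundary is the prescribed $1$-cochain $\{b_{ij}\}$.

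First I would check that $\{b_{ij}\}$ is a Cech $1$-cocycle, i.e. that $b_{jk} - b_{ik} + b_{ij} = 0$ on each triple overlap $U_{ijk}$. This is the computational heart of the argument: the contribution of the $f$'s is $d(f_{jk} - f_{ik} + f_{ij})$, which by (\ref{lift delta}) equals $d(\iota_{\xi} d\log g_{ijk})$, while the contribution of the $A$'s is $\pounds_{\xi}(A_{jk} - A_{ik} + A_{ij})$, which by (\ref{A cech}) equals $\pounds_{\xi}(d\log g_{ijk})$. Since $d\log(g_{ijk}) = g^{-1}_{ijk} dg_{ijk}$ is closed, Cartan's formula $\pounds_{\xi} = d\iota_{\xi} + \iota_{\xi} d$ gives $\pounds_{\xi}(d\log g_{ijk}) = d(\iota_{\xi} d\log g_{ijk})$, and the two contributions cancel. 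Once $\{b_{ij}\}$ is a cocycle, non-emptiness is immediate: the sheaf $\ul{i\Omega}^1_M$ is fine (it admits partitions of unity), so $H^1(M,\ul{i\Omega}^1_M) = 0$, and as $\{U_i\}$ is a good cover we may trivialize $\{b_{ij}\}$, producing $\{a_i\}$ with $a_j - a_i = b_{ij}$ and hence an object of $\pi^{-1}(\{f^{\cx}_{ij}\})$. This is entirely parallel to the non-emptiness step in theorem (\ref{cech lift class}), with $H^1(M,\ul{i\Omega}^1)$ playing the role that $H^3(M,\ul{i\R})$ played there.

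The remaining verifications are formal. For the action (\ref{new conn lift}): given a global $\alpha \in i\Omega^1(M)$ and $\cx = (\xi,\{f^{\cx}_{ij}\},\{a_i\}) \in \pi^{-1}(\{f^{\cx}_{ij}\})$, the shifted collection $\{a_i + \alpha|_{U_i}\}$ again satisfies the constraint, since $\alpha|_{U_j} - \alpha|_{U_i} = 0$ on overlaps (the local restrictions of a global form agree there), so $(a_j + \alpha|_{U_j}) - (a_i + \alpha|_{U_i}) = a_j - a_i = b_{ij}$; thus $\cx + \alpha \in \pi^{-1}(\{f^{\cx}_{ij}\})$, and the assignment is visibly an action of the additive group $i\Omega^1(M)$. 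For transitivity, if $\cx = (\xi,\{f^{\cx}_{ij}\},\{a_i\})$ and $\cx' = (\xi,\{f^{\cx}_{ij}\},\{a'_i\})$ both lie in the fiber, then $(a'_j - a_j) - (a'_i - a_i) = b_{ij} - b_{ij} = 0$, so the local $1$-forms $\{a'_i - a_i\}$ agree on overlaps and patch to a single global $\alpha \in i\Omega^1(M)$ with $\alpha|_{U_i} = a'_i - a_i$, whence $\cx' = \cx + \alpha$. Freeness is clear: $\cx + \alpha = \cx$ forces $\alpha|_{U_i} = 0$ for all $i$, hence $\alpha = 0$. Together these show $\pi^{-1}(\{f^{\cx}_{ij}\})$ is a torsor for $i\Omega^1(M)$.

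I expect the cocycle verification in the second paragraph to be the only genuine obstacle, namely the cancellation between the exterior-derivative term from the $f$'s and the Lie-derivative term from the $A$'s, which hinges on the closedness of $d\log(g_{ijk})$ together with Cartan's formula; everything downstream is the standard fine-sheaf vanishing and a patching argument.
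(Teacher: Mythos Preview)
Your proposal is correct and follows essentially the same approach as the paper: both identify the fiber as solutions to a Cech coboundary equation, verify that the right-hand side is a $1$-cocycle using (\ref{lift delta}) and (\ref{A cech}), invoke fineness of $\ul{i\Omega}^1_M$ for non-emptiness, and patch differences for transitivity. Your version is slightly more explicit in spelling out the Cartan-formula step and the freeness check, but the structure is identical.
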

\begin{proof}  Let $\tx=(\xi,\{f^{\tx}_{ij}\})$ be an object of  $\L_{g_{ijk}}(\xi)$, and let 
\begin{equation} \beta_{ij}=df^{\tx}_{ij}-\pounds_{\xi}A_{ij}.\end{equation}  Then the Cech coboundary of $\{\beta_{ij}\}$ is given by 
\begin{align} (\partial\beta)_{ijk}&=d(\partial f)_{ijk}-\pounds_{\xi}(\partial A)_{ijk} \\ &
 = d(\iota_{\xi}d\log(g_{ijk}))-\pounds_{\xi}(d\log(g_{ijk})) =0 \notag.
 \end{align}  Using partitions of unity we may therefore find $a_i\in i\Omega^1_{U_i}$ such that \begin{equation} a_j-a_i=df^{\tx}_{ij}-\pounds_{\xi}A_{ij}\end{equation} on overlaps.  Thus there exists an extension $\cx=(\xi,\{f^{\tx}_{ij}\},\{a_i\})$ of $(\xi,\{f^{\tx}_{ij}\})$ to a connective lift.  

Given a global 1-form $\alpha$, the formula (\ref{new conn lift}) clearly defines a new connective lift $\cx+\alpha$.  On the other hand, given two connective lifts $(\xi,\{f^{\cx}_{ij}\},\{a^{\cx}_i\})$, $(\xi,\{f^{\cx'}_{ij}\},\{a^{\cx'}_i\})$, if we define $\alpha_i=a^{\cx'}_i-a^{\cx}_i$, then we must have $\alpha_i|_{U_{ij}}=\alpha_j|_{U_{ij}}$, so that there is a global 1-form $\alpha$ with $\alpha_i=\alpha|_{U_i}$.  
\end{proof}

We can also generalize theorem (\ref{cech equiv}), the proof of which is in appendix B.
\begin{theorem} \label{cech equiv. conn} Let $\{\{U_i\},\{s_{ij}\},\{\mu_i\}\}$ be a collection of local trivializations of $(\CC,\A)$ with corresponding Cech data $\{g_{ijk},A_{ij}\}$.  Then there is an equivalence of categories $\L_{(g_{ijk},A_{ij})}\to \L_{(\CC,\A)}.$
\end{theorem}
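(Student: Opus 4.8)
The plan is to promote the non-connective equivalence $\Psi\colon\L_{g_{ijk}}\to\L_{\CC}$ of Proposition \ref{cech equiv} to a functor $\check\Psi\colon\L_{(g_{ijk},A_{ij})}\to\L_{(\CC,\A)}$ lying over it with respect to the two forgetful functors to $\L_{g_{ijk}}$ and $\L_{\CC}$. Once $\check\Psi$ is built, I would prove it is an equivalence by checking essential surjectivity and full faithfulness directly, reducing each to the corresponding property of $\Psi$ together with the explicit Cech computations of \S\ref{conn lifts cech}. The upshot is that almost all of the bookkeeping can be inherited from the non-connective case (and from the appendix-B proof of Proposition \ref{cech equiv}), the only genuinely new ingredient being the one-forms $\{a_i\}$ encoding the connection data.

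On objects, send a triple $(\xi,\{f_{ij}\},\{a_i\})$ first to $F_{\tx}=\Psi(\xi,\{f_{ij}\})\in\L_{\CC}$, with its distinguished local sections $r_i\in F_{\tx}(Q_i)$. It remains to manufacture the 2-morphism $\Theta_{\cx}\colon\pounds_{-\xi}[\A]\Rightarrow\A^0_{\ul{i\R}}F_{\tx}$ required by Definition \ref{def conn lift}. I would define $\Theta_{\cx}$ locally by declaring $\Theta_{\cx}(\mu_i)$ to be the unique connection $\nu_i$ on $F_{\tx}(Q_i)$ with $A_{\nu_i}(r_i)=a_i$, extending to arbitrary $\mu\in\A(Q_i)(V)$ by the rule $\Theta_{\cx}(\mu+\alpha)=\Theta_{\cx}(\mu)-\pounds_{\xi}\alpha$ and to arbitrary objects by naturality along local morphisms to the $Q_i$. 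The content is that these local prescriptions glue into a single 2-morphism: this is exactly where the relation (\ref{conn lift cech}), $a_j-a_i=df_{ij}+\pounds_{\xi}A_{ij}$, enters, playing the role for $\Theta_{\cx}$ that (\ref{lift delta}) plays for $F_{\tx}$; concretely, running the computation that proves (\ref{conn lift cech}) backwards shows that $\Theta_{\cx}$ is compatible with the transition isomorphisms $F_{\tx}(s_{ij})$, which is the content of the modification axiom of Definition (\ref{2-morphism}). On morphisms I would set $\check\Psi(\{u_i\})=\Psi(\{u_i\})$, using that the morphism condition $f'_{ij}-f_{ij}=u_i-u_j$ in $\L_{(g_{ijk},A_{ij})}$ is identical to the one in $\L_{g_{ijk}}$.

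To finish, I would verify the two halves of the equivalence. For essential surjectivity, take any connective lift $\cx=(F_{\cx},\Theta_{\cx})$; since $\Psi$ is essentially surjective I may, after transporting $\Theta_{\cx}$ along an equivalence, assume $F_{\cx}=\Psi(\xi,\{f_{ij}\})$, and then setting $a_i=A_{\Theta_{\cx}(\mu_i)}(r_i)$ produces a triple whose $a_i$ satisfy (\ref{conn lift cech}) precisely by the Proposition establishing that equation, and which $\check\Psi$ sends to $\cx$. For full faithfulness, note that a connective equivalence $T$ between $\check\Psi(\cx)$ and $\check\Psi(\cx')$ is in particular an equivalence of the underlying lifts, hence (by fullness of $\Psi$) is realized by some family $\{u_i\}$ with $f'_{ij}-f_{ij}=u_i-u_j$; the extra condition that $T$ respect the connections, i.e. condition (\ref{conn equiv crit}) of Definition \ref{conn equiv}, translates under the computation carried out just before Definition \ref{cat conn lifts cech} into exactly $a'_i-a_i=du_i$, so that $\{u_i\}$ is a morphism in $\L_{(g_{ijk},A_{ij})}$. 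Faithfulness is immediate since $\check\Psi$ acts as $\Psi$ on morphisms.

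The step I expect to be the main obstacle is the object-level gluing in the second paragraph: confirming that the family of locally defined connections $\nu_i$, tied together by (\ref{conn lift cech}), assembles into a bona fide 2-morphism of gerbe 1-morphisms rather than merely a coherent collection of local data. This amounts to matching the Cech cocycle bookkeeping against the naturality and compatibility conditions of a modification (Definition (\ref{2-morphism})), and is the connective analogue of the construction of $F_{\tx}$ deferred to appendix B; accordingly I would present it in parallel with that argument, isolating the new input contributed by the one-forms $a_i$ and $A_{ij}$.
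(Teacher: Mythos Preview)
Your proposal is correct and follows essentially the same approach as the paper's proof: build the connective lift by taking the non-connective $F_{\tx}$ from Proposition~\ref{cech equiv} and gluing local connections $\nu_i$ on $F_{\tx}(Q_i)$ determined by $A_{\nu_i}(r_i)=a_i$, using relation~(\ref{conn lift cech}) to verify compatibility on overlaps, then checking essential surjectivity and full faithfulness by reading off the Cech data $\{a_i\}$ from a given connective lift and invoking the computations of \S\ref{conn lifts cech}. The only point to tighten is your treatment of morphisms: when you set $\check\Psi(\{u_i\})=\Psi(\{u_i\})$ you must still verify that $\Psi(\{u_i\})$ is a \emph{connective} equivalence, which is exactly where the extra condition $a'_i-a_i=du_i$ is used (the paper makes this step explicit).
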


For each vector field $\xi$ on $M$, let $\L_{(\CC,\A)}(\xi)$ denote the subcategory consisting of connective lifts of $\xi$ to $\CC$.  Also, let $\pi:\L_{(\CC,\A)}(\xi)\to \L_{\CC}(\xi)$ denote the obvious forgetful functor.

\begin{corollary}\label{conn lift class}  For each vector field $\xi$ on $M$ and each $\tx\in \L_{\CC}(\xi)$, the set $\pi^{-1}(\tx)$ of connective lifts extending $\xi$ is a torsor for the group of global 1-forms on $M$.  For each $\cx\in \check{\L}_{(\CC,\A)}(\xi)$, the automorphism group of $\cx$ is canonically isomorphic to the group of locally constant $i\R$-valued functions on $M$.
\end{corollary}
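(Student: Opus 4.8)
Corollary \ref{conn lift class} asserts two facts about the category $\L_{(\CC,\A)}(\xi)$: the torsor property for the set of connective lifts extending a given non-connective lift, and the identification of automorphism groups with locally constant $i\R$-valued functions. Let me sketch how I would prove each.

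Here is my plan.

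The plan is to deduce both statements from the Cech model via the equivalence of categories established in Theorem \ref{cech equiv. conn}, thereby transporting the already-proven Cech results to the sheaf-theoretic setting. First I would fix a collection of local trivializations $\{\{U_i\},\{s_{ij}\},\{\mu_i\}\}$ for $(\CC,\A)$ with associated Cech data $\{g_{ijk},A_{ij}\}$, as guaranteed by Theorem \ref{cech equiv. conn}; this yields an equivalence $\L_{(g_{ijk},A_{ij})}\tocong\L_{(\CC,\A)}$. Since equivalences of categories restrict to equivalences on the subcategories lying over a fixed vector field $\xi$, and since they preserve fibers of the forgetful functors $\pi$ up to natural isomorphism (the forgetful functors in the two pictures being compatible via the equivalence, which is the content of how $\pi$ is defined in both models), it suffices to prove the two claims in the Cech picture and then transport them.

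For the torsor statement, the Cech version is precisely Theorem \ref{cech conn lift class}, which I would invoke directly: for each $\tx\in\L_{g_{ijk}}(\xi)$, the fiber $\pi^{-1}(\tx)$ is a torsor for global $1$-forms via the action $\cx+\alpha=(\xi,\{f^{\cx}_{ij}\},\{a_i+\alpha|_{U_i}\})$. I would verify that the equivalence $\L_{(g_{ijk},A_{ij})}\tocong\L_{(\CC,\A)}$ intertwines the two forgetful functors to $\L_{g_{ijk}}$ and $\L_{\CC}$ respectively, so that fibers correspond; the $i\Omega^1(M)$-action is natural and hence commutes with the equivalence, giving the torsor property for $\pi^{-1}(\tx)$ in $\L_{(\CC,\A)}(\xi)$. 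For the automorphism statement, I would observe that a connective automorphism of $\cx$ is, by Definition \ref{conn equiv}, an automorphism $T$ of the underlying lift $F_{\cx}$ satisfying the compatibility \eqref{conn equiv crit}. By Corollary \ref{lift class}, automorphisms of $F_{\cx}$ are exactly global functions $f\in C^\infty_M(i\R)$, acting by $\sigma\mapsto\sigma+f|_U$. Substituting this into \eqref{conn equiv crit} and using that $\Theta_{\cx'}=\Theta_{\cx}$ forces $A_{\Theta_{\cx}(\mu)}(\sigma+f|_U)=A_{\Theta_{\cx}(\mu)}(\sigma)$, which holds iff $df=0$, i.e. iff $f$ is locally constant. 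This is already recorded in the remark following Definition \ref{conn equiv}.

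The main obstacle is not either individual computation—each reduces to a cited result—but rather verifying cleanly that the equivalence of Theorem \ref{cech equiv. conn} is compatible with the forgetful functors $\pi$ on both sides, so that the fiberwise statements transport correctly. Concretely, I must check that the square relating $\L_{(g_{ijk},A_{ij})}\to\L_{g_{ijk}}$ and $\L_{(\CC,\A)}\to\L_{\CC}$ commutes up to natural isomorphism under the two Cech-to-sheaf equivalences; once this is in place, both assertions follow formally. I expect this compatibility to be essentially immediate from the construction of the equivalence (which acts as the identity on the vector field $\xi$ and sends Cech data $\{a_i\}$ to the connection data $\Theta_{\cx}$), but it is the one place where one genuinely uses the structure of the equivalence rather than a black-box citation.
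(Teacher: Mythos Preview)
Your proposal is correct and matches the paper's approach: the corollary is stated immediately after Theorem~\ref{cech equiv.  conn} without explicit proof, the intended argument being precisely to transport Theorem~\ref{cech conn lift class} through the equivalence of categories, while the automorphism statement is already established in the remark following Definition~\ref{conn equiv}. Your identification of the compatibility of the forgetful functors as the one point requiring care is apt, though as you note it is essentially built into the construction of the equivalence.
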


To end this section, we will generalize the construction in theorem (\ref{non-conn L inf}) to give $\L_{(g_{ijk},A_{ij})}$ the structure of a 2-term $L_{\infty}$ algebra.  First,  one can  checks that the bracket functor in the Cech picture is given by 
\begin{align}\label{conn bracket cech}& [(\xi,\{f^{\tx}_{ij}\},\{a^{\tx}_i\}), (\eta,\{f^{\te}_{ij}\},\{a^{\te}_i\})] \\ = &([\xi,\eta],\{\xi(f^{\te}_{ij})-\eta(f^{\tx}_{ij})\},\{\pounds_{\xi}a^{\te}_i-\pounds_{\eta}a^{\tx}_i\}).\notag\end{align} 

We then have the following proposition, whose proof is an easy extension of the proof of (\ref{non-conn L inf}).
\begin{proposition} Let $\{U_i\}$ be an open cover of a manifold $M$, and let $\{g_{ijk}\}$ be a $\T$-valued Cech cocycle.  Then there is a 2-term $L_{\infty}$ algebra $\check{V}$ with $\check{V}_0=\textrm{Ob}(\L_{(g_{ijk},A_{ij})})$ and $\check{V}_1=\{\{u_i:U_i\to i\R\}\}$, and such that
\begin{enumerate} 
\item $d:\check{V}_1\to\check{V}_0$ is given by 
\begin{equation} \{u_i\}\mapsto (0,\{u_i-u_j\},\{du_i\}), \end{equation} for each $\{u_i\}\in \check{V}_1$.
\item $[\cdot,\cdot]$ is given on elements of $\check{V}_0$ by equation (\ref{conn bracket cech}).  Given $\cx=(\xi,\{f^{\cx}_{ij}\},\{a^{\cx}_{i}\})\in \check{V}_0$ and $u=\{u_i\}\in \check{V}_1$, we define 
\begin{equation} [\cx,u]=\{\xi(u_i)\}=-[u,\cx].\end{equation} 
\item $J=0$.
\end{enumerate}
\end{proposition}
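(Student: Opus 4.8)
The plan is to follow the proof of Theorem~\ref{non-conn L inf} line by line, reusing its vector-field and function-slot computations verbatim and supplying the single new verification needed in the $1$-form slot $\{a_i\}$. The only facts about the Lie derivative that enter are that it is a derivation commuting with $d$, that $\pounds_\xi u = \xi(u)$ on a function $u$, and that $[\pounds_\xi,\pounds_\eta] = \pounds_{[\xi,\eta]}$ on forms; these play exactly the role that $\xi(\cdot)$ and the Jacobi identity for vector fields played in Theorem~\ref{non-conn L inf}.

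First I would check that $d$ is well defined and linear: one verifies directly that $d\{u_i\} = (0,\{u_i-u_j\},\{du_i\})$ satisfies the object relations (\ref{lift delta}) and (\ref{conn lift cech}) with $\xi=0$, and linearity is immediate. Antisymmetry of $[\cdot,\cdot]$ on $\check V_0$ is read off componentwise from (\ref{conn bracket cech}): the vector-field slot because $[\xi,\eta]=-[\eta,\xi]$, the function slot because $\xi(f^{\te}_{ij})-\eta(f^{\tx}_{ij})$ is manifestly antisymmetric, and the $1$-form slot because $\pounds_\xi a^{\te}_i-\pounds_\eta a^{\tx}_i$ is likewise antisymmetric; with the stipulations $[\cx,u]=-[u,\cx]$ and $[u,u']=0$ this gives antisymmetry of the full bracket. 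For the chain-map property I would verify the two relations used in the non-connective proof. For $v_0=\cx\in\check V_0$ and $v_1=\{u_i\}\in\check V_1$,
\begin{align}
[v_0,dv_1] &= \bigl(0,\,\{\xi(u_i)-\xi(u_j)\},\,\{\pounds_\xi du_i\}\bigr) \notag\\
&= \bigl(0,\,\{\xi(u_i)-\xi(u_j)\},\,\{d(\xi u_i)\}\bigr) = d[v_0,v_1],\notag
\end{align}
the middle step using $\pounds_\xi du_i = d\,\pounds_\xi u_i = d(\xi u_i)$; and for $v_1,v_1'\in\check V_1$ both $[dv_1,v_1']$ and $[v_1,dv_1']$ vanish because $dv_1,dv_1'$ have zero vector-field component, so the bracket contracts a vanishing vector field. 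This is the step that genuinely uses $\pounds_\xi d = d\,\pounds_\xi$, and is the natural place to be careful about the extra $1$-form datum, but it is routine.

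Finally, since $J=0$ the Jacobiator identity (\ref{jacobiator identity}) holds vacuously, and the two chain-homotopy conditions for $J$ reduce to the statement that $[\cdot,\cdot]$ satisfies the Jacobi identity \emph{strictly}. For $\cx,\ce,\check\tau\in\check V_0$ one must check
\[
[\cx,[\ce,\check\tau]] = [[\cx,\ce],\check\tau] + [\ce,[\cx,\check\tau]],
\]
and for $v_0,w_0\in\check V_0$, $v_1\in\check V_1$ the mixed relation whose left-hand side $J(dv_1,v_0,w_0)$ vanishes and whose right-hand side vanishes by a one-line computation as before. In the vector-field slot the first identity is the ordinary Jacobi identity; in the function slot it is exactly the computation from Theorem~\ref{non-conn L inf}; and in the $1$-form slot, after expanding (\ref{conn bracket cech}) it becomes an identity among iterated Lie derivatives that reduces to $[\pounds_\xi,\pounds_\eta]=\pounds_{[\xi,\eta]}$ together with the Jacobi identity for vector fields, formally identical to the function-slot computation with each $\xi(\cdot)$ replaced by $\pounds_\xi$. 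The main and only non-formal obstacle is this $1$-form-slot Jacobi computation; once one records that $\pounds_{[\xi,\eta]}=\pounds_\xi\pounds_\eta-\pounds_\eta\pounds_\xi$ on forms, it goes through just as in the non-connective case.
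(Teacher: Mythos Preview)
Your proposal is correct and is exactly what the paper intends: the paper gives no explicit proof, stating only that it ``is an easy extension of the proof of (\ref{non-conn L inf}),'' and you have carried out precisely that extension, isolating the one genuinely new ingredient (the $1$-form slot) and observing that it is governed by $\pounds_\xi d = d\pounds_\xi$ and $[\pounds_\xi,\pounds_\eta]=\pounds_{[\xi,\eta]}$ in the same way the function slot was governed by the corresponding identities for $\xi(\cdot)$.
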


\section{\label{local flows conn}Local flows and infinitesimal connective symmetries}

Given a local flow $\Phi$ and vector field $\xi$ on $M$ as described in \S\ref{local flows}, we now wish to generalize theorem (\ref{diff equiv})  establishing a local equivalence between the category of connective lifts of $\xi$ to $\CC$ and the category of 1-parameter connective lifts of $\Phi$ to $\CC$.  Since every gerbe with connective structure is locally isomorphic to the trivial gerbe with the trivial connective structure, it is enough to consider this case.  We work with the simplicial manifold $U^{\bullet}$ introduced in \S\ref{local flows}.  As discussed in appendix A, we work with sheaves of \emph{relative 1-forms} over the manifolds $U^i$, with respect to the projections $U^i\to \R^i$. These can be described as follows: for each $i\geq 0$ and each $\vec{t}\in \R^i$, let $U_{\vec{t}}\subset U^i$ be the submanifold $U\times \{\vec{t}\}$.  Then $\ul{i\Omega}^1_{U^i,rel}$ is the quotient of the sheaf $\ul{i\Omega}^1_{U^i}$ by the subsheaf of 1-forms whose restriction to $U_{\vec{t}}$ vanishes for each $\vec{t}\in \R^i$.

\begin{terminology}  If $\alpha$ is a 1-form whose restriction to each $U_{\vec{t}}$ is zero, we say that $\alpha$ \emph{vanishes in the $M$-direction}.
\end{terminology}
 Given a principal $\T$-bundle $E\to U^i$, we have the notion of a \emph{relative connection} on $E$, which we view as a sheaf homomorphism from $\ul{E}\to \ul{i\Omega}^1_{U^i,rel}$ intertwining the homomorphism 
\begin{equation} \xymatrix{ \ul{\T}_{U^i} \ar[r]^-{d\log} & \ul{i\Omega}^1_{U^i} \ar[r] & \ul{i\Omega}^1_{U^i,rel},}\end{equation} where the second map is the quotient. The set $\A_{rel}(E)$ of all such such relative connections  is a torsor for the group of  relative 1-forms on $U^i$. Furthermore we have a quotient map $\A(E)\to \A_{rel}(E)$ intertwining the quotient map $\ul{i\Omega}^1_{U^i}\to \ul{i\Omega}^1_{U^i,rel}.$

\begin{definition}  Given $\alpha\in i\Omega^1_{M^i,rel}$, we define
\begin{equation}\label{delta a}\delta \alpha=p_0^*\alpha-p_1^*\alpha+\cdots (-1)^{i+1} p_{i+1}^*\alpha\in 
:i\Omega^1_{U^i,rel}\end{equation} 
\end{definition}  
\begin{rem}  Given $\alpha\in i\Omega^1_{M^i}$ vanishing in the $M$-direction, a simple calculation shows that 
\begin{equation} p_0^*\alpha-p_1^*\alpha+\cdots+(-1)^{i+1}p_{i+1}^*\alpha\end{equation} also vanishes in the $M$-direction.  Therefore (\ref{delta a}) is well-defined.
\end{rem}
\begin{rem} Given a $\ul{\T}_{U^i}$-torsor $S$, recall from $\S$\ref{local flows} that we obtain a $\ul{\T}_{U^{i+1}}$-torsor $\delta S$.  If $\Theta$ is a relative connection on $S$, then we obtain in a natural way a relative connection $\delta\Theta$ on $\delta S$.
\end{rem}
\begin{rem}\label{lie conn} Given a 1-form $\alpha$  on $U^1$ vanishing in the $M$ direction, note that the Lie derivative 
\begin{equation} \pounds_{\frac{d}{dt}}\alpha\end{equation} also vanishes in the $M$ direction.  Therefore we obtain a well-defined Lie derivative 
\begin{equation} \pounds_{\frac{d}{dt}}:i\Omega^1_{U^1,rel}\to i\Omega^1_{U^1,rel}.\end{equation}  Furthermore, given a $\ul{\T}_{U^1}$-torsor $E$ with relative connection $\Theta$, we can generalize the construction in example (\ref{triv conn lift}) to construct a relative connection $\pounds_{\frac{d}{dt}}(\Theta)$ on the $\ul{i\R}_{U^0}$-torsor $\iota_{\frac{d}{dt}}d\log[E]$ according to the formula
\begin{equation} A_{\pounds_{\frac{d}{dt}}(\Theta)}(\iota_{\xi}d\log[\sigma]+f)=\pounds_{\frac{d}{dt}}A_{\Theta}(\sigma)+df.\end{equation}
\end{rem}

\begin{definition}  \label{local lift cat conn}A \emph{local connective lift} of $\Phi$ to $\textrm{B}(\ul{\T}_{U})$ consists of a quadruple $(S,e,\sigma,\Theta)$, where $(S,e,\sigma)$ are as in definition (\ref{local lift cat}), and $\Theta$ is a relative connection on $S$ satisfying 
\begin{enumerate}[(i)] \item $A_{s_0^*\Theta}(e)=0$ and 
\item $A_{\delta\Theta}(\sigma)=0$.
\end{enumerate}  Given another local lift $(S',e',\sigma,\Theta')$ and isomorphism of lifts from $(S,e,\sigma,\Theta)$ to $(S',e',\sigma',\Theta')$ is an equivalence $\Psi:(S,e,\sigma)\to (S',e',\sigma')$ as in definition (\ref{local lift cat}) satisfying $\Psi^*\Theta'=\Theta.$  We denote the resulting category by $\check{\L}(\Phi)$.
\end{definition}

We next extend the differentiation functor $D$ defined in (\ref{local diff fun}) to a functor $\check{D}$ that takes connections into account.  Let $\check{\L}(\xi)$ denote the category of connective lifts of $\xi$ to the trivial gerbe with trivial connective structure over $U^0$.   It follows from (\ref{conn lift class}) that every connective lift $\cx=(F_{\cx},\Theta_{\cx})\in \check{\L}(\xi)$ of $\xi$ is determined by $\ul{i\R}_{U^0}$-torsor $F_{\cx}(\ul{\T}_{U^0})$ with the connection 
\begin{equation} \Theta_{\cx}(\Theta_0), \end{equation} where $\Theta_0$ is the trivial flat connection on $\ul{\T}_{U^0}$.   Put differently, there is an equivalence of categories from $\check{\L}(\xi)$ to the category $\ctor_{\ul{i\R}_{U^0}}$ whose objects are $\ul{i\R}_{U^0}$-torsors with connection.  For simplicity, we will therefore take $\ctor_{\ul{i\R}_{U^0}}$ to be the codomain of the differentiation functor $\check{D}$.

Given $(S,e,\sigma,\Theta)\in \L(\Phi)$, recall from remark (\ref{lie conn}) that we obtain a relative connection $\pounds_{\frac{d}{dt}}(\Theta)$ on $\iota_{\frac{d}{dt}}d\log[S]$.  Furthermore, we can restrict to obtain a connection $D(\Theta)=i^*\pounds_{\frac{d}{dt}}(\Theta)$ on the restriction of $\iota_{\frac{d}{dt}}d\log[S]$ to $U^0$.  One easily checks that the assignment $\Theta\mapsto D(\Theta)$ is functorial.

\begin{definition} The functor 
\begin{equation} \check{D}:\check{\L}(\Phi)\to \ctor_{\ul{i\R}_{U^0}}\end{equation} assigns to each element $(S,e,\sigma,\Theta)$ the $\ul{i\R}_{U^0}$-torsor with connection 
\begin{equation} (D(S),D(\Theta)).\end{equation}
\end{definition}
%\begin{definition} The functor
%\begin{equation} \check{D}:\check{\L}_0(\Phi)\to \check{\L}_0(\xi) \end{equation} 

\begin{theorem} $\check{D}$ is an equivalence of categories.
\end{theorem}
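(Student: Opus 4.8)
The plan is to bootstrap from the non-connective equivalence of Theorem~\ref{diff equiv}, treating the relative connection as extra data lying over a fixed underlying local lift. Let $p:\check{\L}(\Phi)\to\L(\Phi)$ and $q:\ctor_{\ul{i\R}_{U^0}}\to\tor_{\ul{i\R}_{U^0}}$ be the forgetful functors that discard the relative connection, so that $q\circ\check{D}=D\circ p$. For a fixed local lift $X=(S,e,\sigma)\in\L(\Phi)$, write $\A_\Phi(X)$ for the set of relative connections $\Theta$ on $S$ for which $(S,e,\sigma,\Theta)\in\check{\L}(\Phi)$, i.e. which satisfy conditions (i)--(ii) of Definition~\ref{local lift cat conn}, and consider the map
\[ \partial_X:\A_\Phi(X)\longrightarrow\{\text{connections on }D(X)\},\qquad \Theta\mapsto D(\Theta), \]
induced by $\check{D}$. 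I claim the whole theorem reduces to showing that each $\partial_X$ is a bijection; since $\partial_X$ is natural with respect to isomorphisms of the underlying lift, and every such lift is isomorphic to the trivial one $\hat{\Phi}_0$ by Proposition~\ref{local lift class}, it moreover suffices to treat $X=\hat{\Phi}_0$.

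Granting bijectivity of $\partial_X$, the three defining properties of an equivalence follow formally. \emph{Essential surjectivity}: given $(T,\nabla)\in\ctor_{\ul{i\R}_{U^0}}$, essential surjectivity of $D$ (Theorem~\ref{diff equiv}) produces $X$ with $D(X)\cong T$; transporting $\nabla$ to a connection on $D(X)$ and using surjectivity of $\partial_X$ realizes it as $D(\Theta)$, whence $\check{D}(S,e,\sigma,\Theta)\cong(T,\nabla)$. \emph{Faithfulness} is inherited directly from $D$, as a morphism of connective lifts is in particular a morphism of underlying lifts on which $\check{D}$ acts as $D$. \emph{Fullness}: let $\phi$ be a connection-preserving map from $\check{D}(S,e,\sigma,\Theta)$ to $\check{D}(S',e',\sigma',\Theta')$; by fullness of $D$ choose a morphism of underlying lifts $\Psi$ with $D(\Psi)=\phi$. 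Naturality of the differentiation of connections gives $D(\Psi^*\Theta')=D(\Psi)^*D(\Theta')=\phi^*D(\Theta')=D(\Theta)$, and since both $\Psi^*\Theta'$ and $\Theta$ lie in $\A_\Phi(X)$, injectivity of $\partial_X$ forces $\Psi^*\Theta'=\Theta$; thus $\Psi$ is a morphism of connective lifts with $\check{D}(\Psi)=\phi$.

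It remains to prove that $\partial_{\hat{\Phi}_0}$ is a bijection, and this is the $1$-form analogue of the computation with the group $Z_{\T}$ in the proof of Theorem~\ref{diff equiv}. On the trivial torsor $S=\ul{\T}_{U^1}$ a relative connection is a single relative $1$-form $a\in i\Omega^1_{U^1,rel}$, namely its value on the canonical section; conditions (i)--(ii) of Definition~\ref{local lift cat conn} become $s_0^*a=0$ and $\delta a=0$, exactly mirroring the constraints $s_0^*g=1$ and $\delta g=1$ on $Z_{\T}$. By Definition~\ref{local diff fun} and Remark~\ref{lie conn} one computes $\partial_{\hat{\Phi}_0}(a)=i^*(\pounds_{\frac{d}{dt}}a)$, a $1$-form on $U^0$. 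Given a $1$-form $b$ on $U^0$, set $a(x,t)=\int_0^t\varphi_s^*b\,ds$, interpreted as a relative $1$-form on $U^1$; the normalization $s_0^*a=0$ is immediate, the cocycle relation $\delta a=0$ follows from the same flow computation as in Theorem~\ref{diff equiv}, and since $\pounds_{\frac{d}{dt}}a=\partial_t a=\varphi_t^*b$ the fundamental theorem of calculus gives $\partial_{\hat{\Phi}_0}(a)=b$, proving surjectivity. Conversely, differentiating the identity $\delta a=0$ in the last time variable shows that any $a\in\A_\Phi(\hat{\Phi}_0)$ is recovered from $\partial_{\hat{\Phi}_0}(a)$ by this same integral, which yields injectivity.

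The main obstacle is the bookkeeping at the level of relative forms in this last step. One must verify that $\varphi_s^*b$ and its $s$-integral indeed descend to the relative quotient $i\Omega^1_{U^1,rel}$ (i.e. that only the $M$-direction is seen), that $\pounds_{\frac{d}{dt}}$ and the simplicial coboundary $\delta$ interact with the flow precisely as $d\log$ and $\delta$ did for the $\T$-valued cocycles in Theorem~\ref{diff equiv}, and that differentiating $\delta a=0$ reproduces the transport relation $\partial_t a(x,t)=\varphi_t^*\partial_{\hat{\Phi}_0}(a)$ needed to integrate $b$. Once these compatibilities are checked the argument is formally identical to the non-connective case, so no genuinely new phenomenon arises beyond the replacement of multiplicative $\T$-valued cochains by additive relative $1$-forms.
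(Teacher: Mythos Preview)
Your proposal is correct and takes essentially the same approach as the paper: both reduce to the trivial lift $\hat{\Phi}_0$, identify the connective data over it with relative $1$-forms $a$ on $U^1$ satisfying $s_0^*a=0$ and $\delta a=0$, and prove that differentiation at $t=0$ is a bijection onto $i\Omega^1(U^0)$ via the integral formula $a(x,t)=\int_0^t\varphi_s^*b\,ds$ (this is the paper's Lemma~\ref{diff lemma}). The only organizational difference is that you deduce fullness and faithfulness formally from the naturality of the fiberwise bijection $\partial_X$ together with the non-connective equivalence, whereas the paper instead restricts both categories to the fibers over $\hat{\Phi}_0$ and $D(\hat{\Phi}_0)$ and checks directly that automorphism groups match (the connection-preserving elements of $Z_{\T}$ are exactly those mapping to constant functions); the underlying content is the same.
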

\begin{proof} Let $\hat{\Phi}_0\in \L(\Phi)$ be the trivial lift of $\Phi$, and let $\check{\L}_0(\Phi)$ denote the subcategory of $\check{\L}(\Phi)$ whose underlying non-connective lift is $\hat{\Phi}_0$.  It follows from proposition (\ref{local lift class}) that the inclusion of $\check{\L}_0(\Phi)$ into $\check{\L}(\Phi)$ is an equivalence of categories.    Similarly, let $\check{\L}_0(\xi)$ denote the subcategory of $\ctor_{\ul{\i\R}_{U^0}}$ with underlying torsor $E_0=D(\hat{\Phi}_0)$; thus an element of $\check{\L}_0(\xi)$ is a connection $\Theta$ on $E_0$ and a morphism from $\Theta\to \Theta'$ is an automorphism of $E_0$ taking $\Theta$ to $\Theta'$.  Since every $\ul{i\R}_{U^0}$-torsor is isomorphic to $E_0$, it follows that the inclusion $\check{\L}_0(\xi)\to \ctor_{\ul{i\R}_{U^0}}$ is an equivalence of categories.  Therefore, it is sufficient to check that the restriction 
\begin{equation} \check{D}:\check{\L}_0(\Phi)\to \check{\L}_0(\xi)\end{equation} is an equivalence of categories.  

Note that we may identify the set of objects of $\check{\L}_0(\Phi)$ with the set of relative 1-forms $A$ on $U^1$ satisfying conditions (i) and (ii) in definition (\ref{local lift cat conn}).  Let us call this set $Z_{i\Omega^1}$. 
  We may view $A\in Z_{i\Omega^1}$ as a section 
\begin{equation} (x,\vec{t})\mapsto A_{(x,\vec{t})}:=A_x(\vec{t})\in T^*_xM.\end{equation}  The conditions (i) and (ii) can then be written 
\begin{equation}\label{A condition 1} \varphi^*_tA_{\varphi_t(x)}(t')-A_x(t+t')+A_x(t)=0 \end{equation} for each $x\in V$ and 
\begin{equation}\label{A condition 2} A(0)=0.\end{equation}   
We have a linear differentiation map 
\begin{equation} \Delta_{i\Omega^1}:Z_{i\Omega^1}\to i\Omega^1(U^0) \end{equation} mapping $A\in Z_{i\Omega^1}$ to  
\begin{equation} \frac{d}{ds}|_{s=0}A_x(s)\end{equation}  

\begin{lemma}\label{diff lemma} $\Delta_{i\Omega^1}$ is an isomorphism of vector spaces.
\end{lemma}
\begin{proof}  Denote $\Delta_{i\Omega^1}A$ by $\alpha\in i\Omega^1(U^0)$.  Equation (\ref{A condition 1}) implies that 
\begin{equation}  \frac{A_x(t+t')-A_x(t)}{t'}=\varphi_t^*(\frac{A(t')}{t'}),\end{equation} and therefore 
\begin{equation} \frac{d}{ds}|_{s=t}A_x(s)=\varphi_t^*\frac{d}{ds}|_{s=0}A_{\varphi_t(x)}(s)=\varphi_t^*\alpha_{\varphi_t}.\end{equation}  By the fundamental theorem of calculus and condition (\ref{A condition 2}) we therefore have 
\begin{equation}\label{A fund thm} A_x(t)=\int_0^t\varphi_s^*\alpha_{\varphi_s(x)}ds.\end{equation} Therefore $A$ is completely determined by $\alpha$ and $\Delta_{i\Omega^1}$ is injective.  Conversely, given an arbitrary $\alpha\in i\Omega^1(U^0)$, it is easly verified that the relative 1-form on $U^1$ defined by equation (\ref{A fund thm}) is an element of $Z$, i.e. it satisfies conditions (\ref{A condition 1}) and (\ref{A condition 2}).

\end{proof}

Similarly, we my identify the set of objects of $\check{\L}_0(\xi)$ with the set $i\Omega^1_{U^0}$ of 1-forms on $U^0$.  Moreover, we have a commutative diagram 
\begin{equation} \xymatrix{ Z_{i\Omega^1} \ar[r]^-{\cong} \ar[d]_{\Delta_{i\Omega^1}} & \textrm{Ob}(\check{\L}_0(\Phi)) \ar[d]^{\check{D}} \\ i\Omega^1_{U^0} \ar[r]_-{\cong} & \textrm{Ob}(\check{\L}_0(\xi)).}\end{equation}

 Thus, lemma (\ref{diff lemma}) implies that $D$ is a bijection on the level of objects.  Since both categories are groupoids, it only remains to show that for every object $x\in \check{\L}_0(\Phi)$ that $D$ gives an isomorphism from $\Aut(x)\tocong \Aut(\check{D}(x))$.

Recall the set $Z_{\T}$ of functions $U^1\to \T$ defined in the proof of theorem (\ref{diff equiv}), as well as the map $\Delta:Z_{\T}\to iC^{\infty}(U^0)$.  One easily checks that an element $g\in Z_{\T}$ defines an automorphism of $x$ if and only if the relative 1-form on $U^1$ determined by $d\log g$ is equal to zero; call this subset $Z_0\subset Z_{\T}$. On the other hand, a function $f:U^0\to i\R$ defines an  automorphism of $D(x)$ if and only if $f$ is constant.   Equation (\ref{diff inverse})  then implies that  $\Delta_{\T}:Z\to iC^{\infty}(U^0)$ restricts to a bijection between $Z_0$ and the constant functions on $U^0$. 
\end{proof}

\section{\label{courant}The Courant Algebroid Associated to $\check{\L}_{\CC}$}

  In \cite{H}, starting with Cech data for a DD gerbe with connective structure over a manifold $M$, Hitchin constructs an extension of vector bundles
  \begin{equation}\xymatrix{ 0 \ar[r] &T^*M \ar[r] & E \ar[r] & TM \ar[r] & 0},\end{equation} which he calls the ``generalized tangent bundle".\footnote{For an alternative construction due originally to Severa in terms of ``conducting bundles", see \cite{BCh}.}  Moreover, the vector bundle $E$ is equipped with a symmetric, non-degenerate pairing and a skew-symmetric bracket, giving $E$ the structure of a \emph{Courant Algebroid} \cite{Gu}. In this section we give an alternative construction of this algebroid in terms of the category of infinitesimal connective symmetries of $(\CC,\A)$.\footnote{More precisely, for simplicity we give a construction of the $C^{\infty}_M$-module of global sections of $E$.  Since all our constructions in this paper are local, however, we could easily generalize to construct the entire sheaf of sections of $E$.}     In particular, our construction does not depend on a choice of local trivializations for ($\CC,\A$); given such a choice, however, we explain how to compare our construction with that of Hitchin.  It follows from the results of \cite{RW} than any  Courant algebroid $E$ gives rise to a 2-term $L_{\infty}$-algebra $L_E$.  We recall this construction and prove that there is an isomorphism of $L_{\infty}$-algebras $L_E\cong \L_{(g_{ijk},A_{ij})}$. Since the latter is a \emph{strict} 2-term $L_{\infty}$-algebra (i.e. $J=0$),  we obtain a useful perspective on $L_E$.  
  
  \subsection{\label{Hitchin}The Courant Algebroid associated to $(\{g_{ijk}\},\{A_{ij}\})$.}
On any smooth manifold $M$, the Courant bracket is defined on sections of $TM\oplus T^*M$ by the formula \cite{Gu} \begin{equation} [\xi+a,\eta+b]_c=[\xi,\eta]+\pounds_{\xi}b-\pounds_{\eta}a-\frac{1}{2}d\iota_{\xi}b+\frac{1}{2}d\iota_{\eta}a.\end{equation}  Here $\xi$ and $\eta$ are vector fields, $a$ and $b$ are 1-forms, and $[\xi,\eta]$ is the regular Lie bracket of vector fields.  This bracket has several interesting features.  First, although it is skew symmetric, it does not satisfy the Jacobi identity.  However, the defect in the Jacobi identity is easily expressed in terms of the non-degenerate pairing on $C^{\infty}(TM\oplus T^*M)$ given by
\begin{equation} \label{courant pairing}\langle \xi+a,\eta+b\rangle=\frac{1}{2}(\iota_{\xi}b+\iota_{\eta}a).\end{equation}  If we define \begin{equation}\textrm{Jac}(A,B,C)=[[A,B]_c,C]_c+[[B,C]_c,A]_c+[[C,A]_c,B]_c\end{equation} for $A,B,C\in C^{\infty}(TM\oplus T^*M)$, then one can show (see for example  \cite{Gu})
\begin{equation} \textrm{Jac}(A,B,C)=d(\textrm{Nij}(A,B,C)),\end{equation} where the \emph{Nijenhuis tensor} $\textrm{Nij}$ is defined 
\begin{equation} Nij(A,B,C)=\frac{1}{3}(\langle[A,B]_c,C\rangle+\langle[B,C]_c,A\rangle+\langle[C,A]_c,B\rangle).\end{equation}  The properties of the Courant bracket and pairing on $TM\oplus T^*M$ motivate the definition of a \emph{Courant algebroid}\cite{Gu}\cite{BCh}.

Another interesting feature of the Courant bracket is that it admits ``B-field" transformations as symmetries.  Namely, if $B$ is a closed 2-form on $M$, then a simple computation shows
\begin{equation} [\xi+a+\iota_{\xi}B,\eta+b+\iota_{\eta}B]_c=[\xi+a,\eta+b]_c+\iota_{[\xi,\eta]}B\end{equation} so that the transformation 
\begin{equation}\label{b-field transform} \xi+a\mapsto \xi+a+\iota_{\xi}B\end{equation} is compatible with the bracket.  In addition, the skew-symmetry of B implies that the transformation (\ref{b-field transform}) preserves the pairing (\ref{courant pairing}).  The existence of these symmetries allows one to ``twist" the Courant bracket on $TM\oplus T^*M$ to obtain more general Courant algebroids.

%The properties of the Courant bracket motivate the definition of a \emph{Courant Algebroid} \cite{G}.  
%\begin{definition} \label{courant algebroid}  A Courant algebroid is a vector bundle $E$ equipped with a nondegenerate symmetric bilinear form $\langle\cdot,\cdot\rangle$ as well as a skew-symmetric bracket $[\cdot,\cdot]$ on $C^{\infty}(E)$, and with a smooth bundle map $\pi:E\to TM$ called the \emph{anchor}.  This induces a natural differential operator $\DD:C^{\infty}(M)\to C^{\infty}(E)$ via the definition $\langle \DD f, A\rangle=\frac{1}{2}\pi(A)f$ for all $f\in C^{\infty}(M)$.  These structures must be compatible in the following sense:
%\begin{enumerate}[(1)] \item $\pi([A,B])=[\pi(A),\pi(B)]$ for every $A,B\in C^{\infty}(E)$, 
%\item $\textrm{Jac}(A,B,C)=\DD(\textrm{Nij}(A,B,C))$ for every $A,B,C\in C^{\infty}(E)$,
%\item $[A,fB]=f[A,B]+(\pi(A)f)B-\langle A,B \rangle\DD f$ for every $A,B\in C^{\infty}(E)$ and $f\in C^{\infty}(M)$, 
%\item $\pi\circ\DD=0$, i.e. $\langle\DD f,\DD g\rangle=0$ for every $f,g\in C^{\infty}(M)$,
%\item $\pi(A)\langle B,C \rangle=\langle [A,B]+\DD\langle A, B\rangle+\langle B,[A,C]+\DD\langle A,C\rangle\rangle$ for every $A,B,C\in C^{\infty}(E)$,
%\end{enumerate} where the Jacobiator $\textrm{Jac}(\cdot,\cdot,\cdot)$ and the Nijenhuis operator $\textrm{Nij}(\cdot,\cdot,\cdot)$ are as defined before.
%\end{definition}

%In particular, we obtain a Courant algebroid over any manifold with $E=TM\oplus T^*M$ with bracket and bilinear form given as above.

In particular, suppose we are given a DD gerbe with connective structure $(\CC,\A)$ over $M$, then we recall the following construction from \cite{H}.     Let $\{\{U_i\},\{Q_i\},\{s_{ij}\},\{\mu_i\}\}$ be a collection of local trivializations for $(\CC,\A)$, and let $\{\{g_{ijk}\},\{A_{ij}\}\}$ be the corresponding Cech data.   From the relation\begin{equation} A_{jk}-A_{ik}+A_{ij}=d\log(g_{ijk}), \end{equation}  it follows that the linear transformation 
\begin{equation} \left(\begin{array}{cc} 1 & 0 \\ -dA_{ij} & 1\end{array} \right):TU_{ij}\oplus T^*U_{ij}\to TU_{ij}\oplus T^*U_{ij} \end{equation} defines an extension $E$ of $TM$ by $T^*M$, where $dA_{ij}$ acts on $TU_{ij}$ by contraction $\xi\mapsto \iota_{\xi}dA_{ij}$.  In other words, a (global) section of $E$ is described by a pair $(\xi,\{a_i\})$ with $a_i\in i\Omega^1(U_i)$ satisfying 
\begin{equation} a_j-a_i=-\iota_{\xi}dA_{ij}\end{equation}on the overlaps $U_{ij}$.  Because the 2-forms $dA_{ij}$ are closed, we obtain a well-defined Courant bracket $[\cdot,\cdot]_E$ and bilinear pairing $\langle\cdot,\cdot\rangle_E$ on sections of $E$.

Suppose that in addition we are given a curving $K$ for $\A$ with corresponding 2-forms 
\begin{equation} B_i=K(\mu_i)\end{equation} satisfying
\begin{equation} B_j-B_i=dA_{ij}\end{equation} as discussed in \S\ref{conn str and curving}.  We can then define a splitting $s:TM\oplus T^*M\to E$ by 
\begin{equation} s(\xi+a)=(\xi,\{a-\iota_{\xi}B_i\}).\end{equation}  We then have 
\begin{equation} \langle s(\xi+a),s(\eta+b)\rangle_E=\langle \xi+a,\eta+b\rangle \end{equation} and 
\begin{equation} [s(\xi+a),s(\eta+b)]_E=s([\xi+a,\eta+b]_c)+\iota_{\xi}\iota_{\eta}C, \end{equation} where $C=dB_i$ is the curvature form.

%In order to take full advantage of the locality of our previous constructions, we replace the categories $\LL$ and $\LLL$ by \emph{sheaves} of categories $\Ls$ and $\LLs$.
%\begin{definition} Let $\CC$ be a DD gerbe over a manifold $M$.  Then $\Ls$ is stack which associates to each open set $U\subset M$ the category 
%of lifts of vector fields on $U$ to $\CC|_U$. If in addition $\CC$ is equipped with a connective structure $\A$, $\LLs$ is the stack which associates to each $U$ the category of connective lifts of vector fields to $(\CC,\A)|_U$.
%\end{definition} {\color{red} do I need to be more precise here?}

\subsection{The Courant algebroid in terms of infinitesimal connective lifts}
We next explain a construction of $E$ in terms of our category of infinitesimal connective lifts.  Let 
\begin{equation} \tilde{\pi}:\L_{(\CC,\A)}\to \L_{\CC} \end{equation} be the obvious forgetful functor.  Recall from proposition (\ref{conn lift class}) that for each object $\tx\in \L_{\CC}$, the set $\tilde{\pi}^{-1}(\tx)$ of connective lifts extending $\tx$ is a torsor for the group of 1-forms $i\Omega^1$.  Furthermore, since we have specified a connective structure $\A$ on $\CC$, for each vector field $\xi$ we have the horizontal lift $\tx^h\in \L_{\CC}(\xi)$ defined in (\ref{horizontal}).  We may then define
\begin{definition} 
\begin{equation} E=\coprod_{\xi\in C^{\infty}(TM)}\tilde{\pi}^{-1}(\tx^h). \end{equation} \end{definition}

Let $\pi:E\to C^{\infty}(TM)$ denote the projection map, and for each $\xi\in C^{\infty}(TM)$ let $E_{\xi}=\pi^{-1}(\xi)$ denote the set of connective lifts extending $\xi^h$.  Let us describe the sets $E_{\xi}$ more concretely.  Given $\cx=(\xi^h,\Theta_{\cx})\in E_{\xi}$,  for each object $Q\in \CC(U)$ and each local section $\mu$ of $\A(Q)$, define a 1-form $a_{\cx}(\mu)$ by 
\begin{equation}\label{ext map} a_{\cx}(\mu)=A_{\Theta_{\cx}(\mu)}(\iota_{\xi}[\mu]).\end{equation}  The relations
\begin{equation} \Theta_{\cx}(\mu+\alpha)=\Theta_{\cx}(\mu)-\pounds_{\xi}\alpha \end{equation} together with the Cartan formula for the Lie derivative imply that for each $1$-form $\alpha$ we have 
\begin{equation} \label{ext 1}a_{\cx}(\mu+\alpha)=a_{\cx}(\mu)-\iota_{\xi}d\alpha.\end{equation}  Equivalently, we have an isomorphism of $\ul{i\Omega}^1_U$-torsors 
\begin{equation}-\iota_{\xi}d[\A(Q)]\tocong \ul{i\Omega}^1_U.\end{equation}
In addition, one can check directly that given $\psi:Q\to R$ we have 
\begin{equation} \label{ext 2}a_{\cx}(\mu)=a_{\cx}(\psi_*\mu),\end{equation}  and that given an inclusion $i:V\to U$, we have 
\begin{equation}\label{ext 3} a_{\cx}(i^*\mu)=a_{\cx}(\mu)|_{V}.\end{equation} 
Unwinding definition (\ref{2-morphism}) we see that the conditions (\ref{ext 2}) and (\ref{ext 3}) imply that the assignment (\ref{ext map}) is equivalent to a 2-morphism from $\iota_{\xi}d[\A]$ to the constant 1-morphism $\CC\to \textrm{B}(\ul{i\Omega}^1_M)$ taking every $Q\in \CC(U)$ to the trivial $\ul{i\Omega}^1_U$-torsor.\footnote{For comparison, see remark (\ref{curving restatement})}.

\begin{proposition} \label{hor ext char}The correspondence $\cx\mapsto a_{\cx}$ defines a bijection between $E_{\xi}$ and the set of 2-morphisms  
\begin{equation} -\iota_{\xi}d[\A]\toocong \ul{i\Omega}^1,\end{equation} where $ \ul{i\Omega}^1$ denotes the constant 1-morphism taking every $Q\in\CC(U)$ to $\ul{i\Omega}^1_U.$
\end{proposition}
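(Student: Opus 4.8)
The plan is to observe that, because $E_{\xi}=\tilde{\pi}^{-1}(\tx^h)$, every element of $E_{\xi}$ has the \emph{same} underlying lift $\tx^h$, so that $F_{\cx}(Q)=\iota_{\xi}[\A(Q)]$ for every object $Q\in\CC(U)$ and the only datum varying over $E_{\xi}$ is the $2$-morphism $\Theta_{\cx}$. The discussion culminating in equations (\ref{ext 1}), (\ref{ext 2}) and (\ref{ext 3}) already shows that $\cx\mapsto a_{\cx}$ takes values in the asserted set of $2$-morphisms $-\iota_{\xi}d[\A]\Rightarrow \ul{i\Omega}^1$, so what remains is to prove that this assignment is a bijection. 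I would do this by reducing to a single elementary fact about connections on $\ul{i\R}_U$-torsors and then checking that all compatibility conditions match up on the two sides.

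The key fact is that a connection on an $\ul{i\R}_U$-torsor $S$ is determined by the value of its connection $1$-form on a single local section: on any other section $\sigma+f$ the connection $1$-form is pinned down by the gauge rule $A_{\Theta}(\sigma+f)=A_{\Theta}(\sigma)+df$. Applying this with $S=\iota_{\xi}[\A(Q)]$ and the distinguished section $\iota_{\xi}[\mu]$, I conclude that the $1$-form $a_{\cx}(\mu)=A_{\Theta_{\cx}(\mu)}(\iota_{\xi}[\mu])$ completely determines the connection $\Theta_{\cx}(\mu)$, and hence (since $F_{\cx}=\tx^h$ is fixed) determines $\cx$. This gives injectivity. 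For surjectivity, given a $2$-morphism $b:-\iota_{\xi}d[\A]\Rightarrow\ul{i\Omega}^1$ I would define $\Theta_{\cx}(\mu)$ to be the unique connection on $\iota_{\xi}[\A(Q)]$ whose connection $1$-form on $\iota_{\xi}[\mu]$ equals $b(\mu)$, extended to the remaining sections by the gauge rule; by construction $a_{\cx}=b$.

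What then has to be verified is that the $\Theta_{\cx}$ so defined really is a $2$-morphism of the type required in definition (\ref{def conn lift}), namely a modification $\pounds_{-\xi}[\A]\Rightarrow\A^0_{\ul{i\R}}F_{\cx}$. Concretely this amounts to two checks. First, the transformation rule $\Theta_{\cx}(\mu+\alpha)=\Theta_{\cx}(\mu)-\pounds_{\xi}\alpha$ is equivalent, under the evaluation $A_{(-)}(\iota_{\xi}[\mu+\alpha])$ and the identity $\iota_{\xi}[\mu+\alpha]=\iota_{\xi}[\mu]+\iota_{\xi}\alpha$, to the relation $a_{\cx}(\mu+\alpha)=a_{\cx}(\mu)-\iota_{\xi}d\alpha$ of (\ref{ext 1}); the two are interchanged precisely by the Cartan formula $\pounds_{\xi}=d\iota_{\xi}+\iota_{\xi}d$. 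Second, the naturality squares defining a modification (with respect to morphisms $\psi:Q\to R$ in $\CC$ and to restrictions along inclusions of open sets) correspond term by term to conditions (\ref{ext 2}) and (\ref{ext 3}) on $a_{\cx}$. I expect the main obstacle to be exactly this last bookkeeping: the pointwise correspondence between a connection and its $1$-form on a fixed section is immediate, but one must carefully unwind definition (\ref{2-morphism}) to confirm that the coherence data of the modification $\Theta_{\cx}$ (its compatibility across different objects $Q$ and with the transition isomorphisms $s_{ij}$) translate \emph{exactly} into the naturality of $a_{\cx}$, with no sign errors introduced through the Lie derivative and the $\iota_{\xi}d$ term.
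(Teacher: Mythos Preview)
Your argument is correct, but it takes a different route from the paper's. You work element by element: fixing the underlying lift as $\tx^h$, you observe that a connection on $\iota_{\xi}[\A(Q)]$ is determined by its connection $1$-form on the distinguished section $\iota_{\xi}[\mu]$, and you build the inverse map by hand, then check that the Cartan formula and the naturality squares of definition (\ref{2-morphism}) match conditions (\ref{ext 1})--(\ref{ext 3}). The paper instead argues structurally: it uses proposition (\ref{ass gerbe coh}) to produce, for each $a_{\cx}$, a chain of canonical $2$-isomorphisms
\[
\pounds_{-\xi}[\A]\;\Rightarrow\;-\iota_{\xi}d[\A]\boxplus(-d\iota_{\xi}[\A])\;\Rightarrow\;\ul{i\Omega}^1\boxplus(-d\iota_{\xi}[\A])\;\Rightarrow\;-d\iota_{\xi}[\A]\;=\;\A^0_{\ul{i\R}}\circ F_{\tx^h},
\]
the first arrow being the Cartan decomposition at the level of $1$-morphisms and the second inserting $a_{\cx}$. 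It then invokes corollary (\ref{conn lift class}) to note that $E_{\xi}$ is a torsor for $i\Omega^1(M)$, observes that the target set of $2$-morphisms is likewise an $i\Omega^1(M)$-torsor, and that the map just built is a morphism of torsors, hence automatically a bijection. Your approach is more elementary and self-contained (it does not appeal to the torsor classification), at the cost of the explicit bookkeeping you flag; the paper's approach avoids that verification by leaning on the torsor structure, so that once the map is seen to be equivariant no further checks are needed.
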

\begin{proof}  By definition, an element of $E_{\xi}$ is an equivalence\begin{equation} \Theta_{\cx}:\pounds_{-\xi}[\A]\toocong \A_0(F_{\tx^h})=-d[F_{\tx^h}],\end{equation} where by definition 
\begin{equation} F_{\cx^h}=\iota_{\xi}[\A].\end{equation}  Suppose we are given an equivalence \begin{equation} a_{\cx}:-\iota_{\xi}d[\A]\toocong \ul{i\Omega}^1_M\end{equation} as in the proposition.  Then applying proposition (\ref{ass gerbe coh}) repeatedly we can construct an equivalence 
\begin{align} \pounds_{-\xi}[\A] & \toocong -\iota_{\xi}d[\A]\bp(-d\iota_{\xi}[\A]) \toocong \ul{i\Omega}^1_M\bp (-d\iota_{\xi}[\A])  \\ & \toocong -d\iota_{\xi}[\A]\toocong \A_0(F_{\cx^h}).\notag\end{align}

By corollary (\ref{conn lift class}) the set $E_{\xi}$ is a torsor for the group of global 1-forms on $M$; on the other hand, it is not hard to see that the set of 2-morphisms described in the proposition is also a  torsor for 1-forms and that the map taking $a_{\cx}$ to $\Theta_{\cx}$ is by construction a morphism of torsors.  This map is therefore both one-to-one and onto.  %A connective lift of $\xi$ extending $\xi^h$ is given by a 2-morphism
%\begin{equation}\label{conn ext} \pounds_{\xi}[\A]\Rightarrow -d[F_{\xi^h}]=-d[\iota_{-\xi}[\A]].\end{equation} Applying proposition (\ref{ass gerbe coh}), we see that the left-hand side of (\ref{conn ext}) is canonically isomorphic to 
%\begin{equation} \iota_{\xi}d[\A]\bp d\iota_{\xi}[\A],\end{equation} whereas the right-hand side is canonically isomorphic to 
%\begin{equation} d\iota_{\xi}[\A].\end{equation}  Therefore, we conclude that a connective lift extending $\xi^h$ is equivalent to a trivialization of $\iota_{\xi}d[\A]$, i.e. a 2-morphism from $\A$ to the constant 1-morphism from $\CC$ to $\ul{Bi\Omega^2}_M$.
\end{proof} 

Using the characterization of sections of $E$ given in proposition (\ref{hor ext char}), we can now endow $E$ with several interesting algebraic structures. Given an element $\cx$ of $E_{\xi}$, let $a_{\cx}$ denote the corresponding function satisfying (\ref{ext 1}), (\ref{ext 2}) and (\ref{ext 3}).  Given $\cx,\ce\in E$, we define $\cx+\ce$ by 
\begin{equation} \label{E sum} a_{\cx+\ce}=a_{\cx}+a_{\ce}, \end{equation} and given $f\in C^{\infty}(M)$, we define $f\cx$ by 
\begin{equation} \label{E times f}  a_{f\cx}=fa_{\cx}.\end{equation}  It is then readily verified that $a_{\cx+\ce}$ and $a_{f\cx}$ satisfy (\ref{ext 1}), (\ref{ext 2}) and (\ref{ext 3}).  Furthermore, we define $0\in E_0$ to be the function taking every local section $\mu$ of $\A(Q)$ to $0$.  Altogether $E$ obtains the structure of a $C^{\infty}(M)$-module.  Furthermore, it is clear that the projection map $\pi:E\to C^{\infty}(TM)$ is a map of modules.  Therefore we obtain an extension of modules \begin{equation}\label{atiyah 3}\xymatrix{ 0 \ar[r] & i\Omega^1(M) \ar[r] & E \ar[r] & C^{\infty}(TM)\ar[r] & 0.}\end{equation}
  
Suppose we are given a curving $K$.  If we then define 
\begin{equation} a_{s(\xi)}(\mu)=-\iota_{\xi}K(\mu), \end{equation} then it follows from definition (\ref{def curving}) that $a_{s(\xi)}$ satisfies condition (\ref{ext 1}).  Therefore a curving determines a splitting of the sequence (\ref{atiyah 3}).

We next construct a non-degenerate pairing $\langle\cdot,\cdot\rangle_E$ on $E$.  Given $a_{\cx},a_{\ce}\in E$ locally over an open set $U\subset M$ by 
\begin{equation}\label{pairing def} \langle a_{\cx},a_{\cx}\rangle_E|_U=\frac{1}{2}(\iota_{\xi}a_{\ce}(\mu)+\iota_{\eta}a_{\cx}(\mu)),\end{equation} for $\mu$ is a global section of $\A(Q)$ for some $Q\in \CC(U)$.  By (\ref{ext 1}) and (\ref{ext 2}), we see that (\ref{pairing def}) is independent of the choice of $Q$ and $\mu$.  Furthermore, if we cover $M$ by open sets $\{U_i\}$ such that $\CC(U_i)$ is non-empty for each $i$, then condition (\ref{ext 3}) implies that the local the formula (\ref{pairing def}) is consistent with restrictions and we therefore obtain a global function on $M$.  Clearly $\langle\cdot,\cdot\rangle_E$ is bilinear over functions and symmetric.  To see that it is also non-degenerate, suppose the for some $\cx\in E$ we have 
\begin{equation} \langle\cx,\ce\rangle_E=0\end{equation} for each $\ce\in E$.  In particular, for each 1-form $\alpha$ on $M$, if we let $\eta=0$ and $\ce$ equal the image of $\alpha$ in $E_0$, we have 
\begin{equation} \frac{1}{2}\iota_{\xi}\alpha=0.\end{equation}  Since this holds for arbitrary $\alpha$ we must have $\xi=0$.  Therefore $\cx\in E_0$, say $\cx=\beta$ for $\beta$ a 1-form.  But equation (\ref{pairing def}) implies that $\beta$ must pair with each vector field trivially, and must therefore vanish.

There is also a natural bracket on $E$.  Given $\cx,\ce\in E$, define $[\cx,\ce]_E$ by 
\begin{equation} a_{[\cx,\ce]}=\pounds_{\xi}a_{\ce}-\pounds_{\eta}a_{\cx}-\frac{1}{2}d\iota_{\xi}a_{\ce}+\frac{1}{2}d\iota_{\eta}a_{\cx}.
\end{equation}
To verify that this does in fact define an element of $E_{[\xi,\eta]}$, note that 
\begin{align} & a_{[\cx,\ce]_E}(\mu+\alpha)-a_{[\cx,\ce]_E}(\mu) \\
 =  & -(\pounds_{\xi}\iota_{\eta}d\alpha-\pounds_{\eta}\iota_{\xi}d\alpha+\frac{1}{2}d\iota_{\xi}\iota_{\eta}d\alpha-\frac{1}{2}d\iota_{\eta}\iota_{\xi}d\alpha \notag) \\
 =  &-(\pounds_{\xi}\iota_{\eta}d\alpha-\iota_{\eta}d\iota_{\xi}d\alpha+d\iota_{\eta}\iota_{\xi}d\alpha-d\iota_{\eta}\iota_{\xi}d\alpha \notag) \\
 =  &-(\pounds_{\xi}\iota_{\eta}d\alpha-\iota_{\eta}\pounds_{\xi}d\alpha \notag )\\
 =  &-[\pounds_{\xi},\iota_{\eta}]d\alpha \notag \\
 =  &-\iota_{[\xi,\eta]}d\alpha.\notag \end{align}

Although it is possible to show directly that these structures satisfy the compatibility conditions possessed by the sections of a Courant algebroid, we will instead proceed by comparing our construction to that in \cite{H}.  We introduce local trivializations $\{\{U_i\},\{Q_i\},\{s_{ij}\},\{\mu_i\}$ for $(\CC,\A)$ with corresponding Cech data $(\{g_{ijk}\},\{A_{ij}\})$.  Recall from \S\ref{conn str and curving} that we obtain Cech data for the horizontal lift of a vector field $\xi$ 
\begin{equation} f_{ij}^{\tx^h}=\iota_{\xi}A_{ij}.\end{equation}  A simple calculation then shows that the collection\footnote{To avoid notational clutter, we will surpress the brackets around $f^{\cx}_{ij}$ and $a^{\cx}_{ij}$.}
\begin{equation} (\xi,\iota_{\xi}A_{ij},a^{\cx}_i) \end{equation} is Cech data for a connective lift if and only if 
\begin{equation} a^{\cx}_j-a^{\cx}_i=-\iota_{\xi}dA_{ij}.\end{equation}   In terms of the functions $a_{\cx}$ given above, the 1-forms $a_i$ are given by 
\begin{equation} a^{\cx}_i=a_{\cx}(\mu_i).\end{equation}   Therefore in the Cech picture the pairing is given by 
\begin{equation} \langle (\xi,\iota_{\xi}A_{ij},a^{\cx}_i),(\eta,\iota_{\eta}A_{ij},a^{\ce})\rangle_E =\frac{1}{2} (\iota_{\xi}a^{\ce}+\iota_{\eta}a^{\cx}), \end{equation} and the bracket by 
$ [(\xi,\iota_{\xi}A_{ij},a^{\cx}_i),(\eta,\iota_{\eta}A_{ij},a^{\ce}]=$
\begin{equation} ([\xi,\eta],\iota_{[\xi,\eta]}A_{ij},\pounds_{\xi}a^{\ce}_i-\pounds_{\eta}a^{\cx}_i-\frac{1}{2}d\iota_{\xi}a^{\ce}_i+\frac{1}{2}\iota_{\eta}a^{\ce}_i).\end{equation}  We see that these are exactly the formulas used to define the Courant algebroid structure in \S\ref{Hitchin}.

In \cite{RW} it was shown that any Courant algebroid $E$ gives rise to an $L_{\infty}$-algebra.  Moreover, as pointed out in \cite{R1}, this $L_{\infty}$-algebra can be restricted to one with 2-terms, which we call $L_E$.   \begin{definition} In terms of local trivializations $\{\{U_i\},\{Q_i\},\{s_{ij}\},\{\mu_i\}\}$,  the 2-term $L_{\infty}$-algebra $L_E=\{W_0\oplus W_1,d,[\cdot,\cdot],Jac\}$ is given by 
\begin{enumerate} \item $W_0=C^{\infty}(E)$, $W_1=C^{\infty}(\ul{i\R}_M)$.
\item $d:W_1\to W_0$ is given by 
\begin{equation} f \mapsto (0,a_i=df|_{U_i}).\end{equation} 
\item $[\cdot,\cdot]:W_0\times W_0\to W_0$ is given by the Courant bracket.  Given $(\xi,a_i)\in W_0$ and $f\in W_1$, we define 
\begin{equation} [(\xi,a_i),f]=-[f,(\xi,a_i)]=\frac{1}{2}\xi\cdot f.\end{equation}
\item $\textrm{Jac}(A,B,C)=-\frac{1}{3}\textrm{Nij}(A,B,C). $ 
\end{enumerate} 
\end{definition} 

Suppose that we are given two sections $\tx,\te$ of the Courant algebroid $E$, as described by Cech data.  In particular, $\tx$ and $\te$ are elements of the category $\L_{(g_{ijk},A_{ij})}$, and we can take their bracket $[\tx,\te]$ as described in equation (\ref{conn bracket cech}).  This yields another connective lift which is not in general a section of $E$.  On the other hand, as discussed above the Courant bracket of $\tx$ and $\te$ does define another section of $E$.  Although these two brackets are not equal, they are related by a natural isomorphism.  The following theorem explains this relationship in the language of $L_{\infty}$-algebras.

\begin{theorem} Let $E$ be the Courant algebroid constructed above.  Then there is an isomorphism of 2-term $L_{\infty}$ algebras
\begin{equation} \Phi:L_{E} \tocong L_{{(g_{ijk},A_{ij})}}.\end{equation}
\end{theorem}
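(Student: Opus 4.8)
The plan is to construct $\Phi$ as an explicit $L_\infty$-homomorphism in the sense of definition (\ref{L-infty hom}) and then argue it is an equivalence. Working relative to the local trivializations $\{\{U_i\},\{Q_i\},\{s_{ij}\},\{\mu_i\}\}$ used to describe both sides, I would define the underlying chain map $\phi=(\phi_0,\phi_1)$ by sending a global section $\cx$ of $E$ over $\xi$, with Cech data $(\xi,\{a_i^{\cx}\})$ satisfying $a_j^{\cx}-a_i^{\cx}=-\iota_{\xi}dA_{ij}$, to the horizontal-type object $\phi_0(\cx)=(\xi,\{\iota_{\xi}A_{ij}\},\{a_i^{\cx}\})\in\check{V}_0$, and a global function $f\in W_1=C^{\infty}_M(i\R)$ to the constant cochain $\phi_1(f)=\{f|_{U_i}\}\in\check{V}_1$. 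First I would check that $\phi$ is a chain map: since $df=(0,\{df|_{U_i}\})$ in $L_E$ and $d\{f|_{U_i}\}=(0,\{0\},\{df|_{U_i}\})$ in $\L_{(g_{ijk},A_{ij})}$, the identity $d\circ\phi_1=\phi_0\circ d$ is immediate.

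The heart of the argument is the degree $1$ chain homotopy $\phi_2:W_0\otimes W_0\to\check{V}_1$. Motivated by the fact that the Courant bracket of \S\ref{Hitchin} is the antisymmetrization of the Lie-derivative bracket (\ref{conn bracket cech}), I would set
$$\phi_2(\cx,\ce)=\left\{\tfrac12\bigl(\iota_{\eta}a_i^{\cx}-\iota_{\xi}a_i^{\ce}\bigr)\right\},$$
which is manifestly antisymmetric. The first $L_\infty$-homomorphism condition $d\,\phi_2(\cx,\ce)=\phi_0[\cx,\ce]_E-[\phi_0\cx,\phi_0\ce]$ then splits into two Cech computations: on the $\{a_i\}$-component it follows from $d\phi_2(\cx,\ce)_i=\tfrac12 d(\iota_{\eta}a_i^{\cx}-\iota_{\xi}a_i^{\ce})$, which is exactly the exact correction relating the Courant and Lie-derivative brackets; on the $\{f_{ij}\}$-component it reads $\phi_2(\cx,\ce)_i-\phi_2(\cx,\ce)_j=\iota_{[\xi,\eta]}A_{ij}-(\xi\iota_{\eta}A_{ij}-\eta\iota_{\xi}A_{ij})$, which, using $a_j^{\cx}-a_i^{\cx}=-\iota_{\xi}dA_{ij}$, reduces to the identity $f^{[\tx^h,\te^h]}_{ij}-f^{\widehat{[\xi,\eta]}^h}_{ij}=\iota_{\eta}\iota_{\xi}dA_{ij}$ already established in \S\ref{conn str and curving}. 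Here I would fix a single sign convention for $A_{ij}$ throughout, since the two components are sensitive to it.

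The second $L_\infty$-homomorphism condition is where the descriptions genuinely differ, and I expect this to be the main obstacle: $L_E$ carries the nonzero Jacobiator $\mathrm{Jac}=-\tfrac13\mathrm{Nij}$ measuring the failure of the Courant bracket to satisfy Jacobi, whereas $\L_{(g_{ijk},A_{ij})}$ is strict with $J=0$. The condition therefore requires that the Nijenhuis term, pushed through $\phi_1$, be absorbed exactly by the remaining $\phi_2$ and bracket terms:
$$[\phi_2(x,y),\phi z]+\phi_2([x,y],z)+\phi_1\bigl(-\tfrac13\mathrm{Nij}(x,y,z)\bigr)=[\phi x,\phi_2(y,z)]+[\phi_2(x,z),\phi y]+\phi_2(x,[y,z])+\phi_2([x,z],y).$$
I would compute $\mathrm{Nij}(\cx,\ce,\check{\tau})$ directly from the pairing (\ref{courant pairing}) and the bracket on $E$; evaluated on horizontal sections it yields the curvature $3$-form $C(\xi,\eta,\tau)$, precisely the obstruction that appeared at the end of \S\ref{conn str and curving}, and the identity then collapses to the Jacobi identity for vector fields together with the cocycle relations for $\{A_{ij}\}$. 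The bookkeeping is delicate because it intertwines the $\tfrac12$-factors in $\phi_2$, the antisymmetrization defining the Courant bracket, and the $\tfrac13$ in $\mathrm{Nij}$.

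Finally, I would show $\Phi$ is an isomorphism of $L_\infty$-algebras in the sense of an equivalence. The chain map $\phi$ is a quasi-isomorphism: on $H_1$ it identifies the locally constant functions comprising $\ker d$ on both sides, and on $H_0=\pi_0$ it is surjective because theorems (\ref{cech lift class}) and (\ref{cech conn lift class}) show every connective lift is isomorphic to a horizontal one, and injective by the torsor description of connective lifts. Since a quasi-isomorphism of $2$-term $L_\infty$-algebras admits a quasi-inverse \cite{BC}, $\Phi$ is an $L_\infty$-equivalence, which is the asserted isomorphism.
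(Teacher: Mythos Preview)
Your approach is essentially identical to the paper's: the same chain map $\phi_0(\xi,\{a_i\})=(\xi,\{\iota_\xi A_{ij}\},\{a_i\})$ with $\phi_1=\mathrm{id}$, and the same homotopy $\phi_2(\cx,\ce)=\{\tfrac12(\iota_\eta a_i^{\cx}-\iota_\xi a_i^{\ce})\}$, followed by the verification of condition~(1) in definition~(\ref{L-infty hom}). The paper simply declares condition~(2) ``tedious but straightforward'' and omits it, and it does not explicitly argue the isomorphism claim at all; your quasi-isomorphism argument via theorems~(\ref{cech lift class}) and~(\ref{cech conn lift class}) correctly supplies what the paper leaves implicit. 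One small caveat: your heuristic that $\mathrm{Nij}$ on horizontal sections ``yields the curvature $3$-form $C(\xi,\eta,\tau)$'' is misleading here, since no curving $\{B_i\}$ has been chosen and none is needed---condition~(2) is a direct identity in the Cech data $\{a_i\},\{A_{ij}\}$ alone, so you should frame that step purely in those terms rather than invoking \S\ref{conn str and curving}.
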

\begin{proof}  First, we define a degree 0 chain map $\phi_0:W_0 \to V_0$.  In degree 0 we define 
\begin{equation} \phi_0:(\xi,\{a_{j}\})\mapsto (\xi,\{\iota_{\xi}A_{ij}\},\{a_i\});\end{equation} by our earlier discussion right-hand side defines an object of $\L_{(g_{ijk},A_{ij})}$.  In degree 1 we define $\phi_0$ to be the identity map on $C^{\infty}(\ul{i\R}_M)$.  The verification that $\phi_0$ is a chain map is trivial.  

Given $x=(\xi,\{a_i\})$ and $y=(\eta,\{b_i\})$, note that 
\begin{align} \phi([x,y]) & = \phi([\xi,\eta],\{\pounds_{\eta}b_i-\pounds_{\xi}a_i-\frac{1}{2}d\iota_{\eta}b_i+\frac{1}{2}d\iota_{\eta}a_i\}) \\ & = ([\xi,\eta],\{\iota_{[\xi,\eta]}A_{ij}\},\{\pounds_{\eta}b_i-\pounds_{\xi}a_i-\frac{1}{2}d\iota_{\eta}b_i+\frac{1}{2}d\iota_{\eta}a_i\}).\notag
\end{align}  On the other hand we have 
\begin{equation} [\phi(x),\phi(y)]=([\xi,\eta],\{\xi\cdot(\iota_{\eta}A_{ij})-\eta\cdot(\iota_{\xi}A_{ij})\},\{\pounds_{\xi}b_i-\pounds_{\eta}a_i\}).\end{equation}  Using the formula
\begin{equation} \iota_{\eta}\iota_{\xi}A_{ij}=\xi\cdot(\iota_{\eta}A_{ij})-\eta\cdot(\iota_{\xi}A_{ij})-\iota_{[\xi,\eta]}A_{ij},\end{equation} we therefore have
\begin{equation} \phi([x,y])-[\phi(x),\phi(y)]=(0,\{-\iota_{\eta}\iota_{\xi}A_{ij}\},d(-\frac{1}{2}\iota_{\xi}b_i+\frac{1}{2}\iota_{\eta}a_i\}).\end{equation}  If we then define 
\begin{align} \phi_2: C^{\infty}(E)\times C^{\infty}(E) & \to C^{\infty}(\ul{i\R}_M)
\\ (\xi,\{a_i\}),(\eta,\{b_i\}) & \mapsto \{-\frac{1}{2}\iota_{\xi}b_i+\frac{1}{2}\iota_{\eta}a_i\},
\end{align} a simple computation shows that 
\begin{equation} d\phi_2(x,y)=\phi([x,y])-[\phi(x),\phi(y)]\end{equation} for all $x,y\in C^{\infty}(E)$.  To finish the proof that $\Phi=(\phi,\phi_2)$ defines a homomorphism of $L_{\infty}$-algebras, we need to verify that equation (2) in definition (\ref{L-infty hom}) holds.  We omit this computation, which is somewhat tedious but straightforward.

\end{proof} 
\appendix
\section{Appendix}

In this appendix we discuss smooth families of symmetries of gerbes and their relationship to infinitesimal symmetries.  This gives a direct connection between equivariant gerbes and the infinitesimal symmetries explored in this paper.  We consider both gerbes with and without connective structures.
\subsection{Gerbes without connective structures}

Let $\Phi$ be a 1-parameter family of diffeomorphisms of $M$ generated by a vector field $\xi$.  We now explain how a family of symmetries of $\CC$ lifting $\Phi$ gives rise to an infinitesimal symmetry of $\CC$ lifting $\xi$ via a process analogous to differentiation.  The local version of this functor was discussed in \S\ref{local flows}

Let us briefly return to the case where $P$ is a principal circle bundle over $M$.  Given any diffeomorphism $\varphi$ of $M$, we can form the pull-back $\varphi^*E$.  By definition, for each $m\in M$ the fiber $(\varphi^*E)_m$ can be identified with the fiber $E_{\varphi(m)}$, and a symmetry of $E$ lifting $\varphi$ is equivalent to an isomorphism $E\tocong \varphi^*E$.  More generally, given a smooth family of diffeomorphisms \begin{equation} \Phi:M\times \R\to M,\end{equation} a smooth family of symmetries of $E$ lifting $\Phi$ is equivalent to an isomorphism 
\begin{equation}\hat{\Phi}:p_1^*E\to p_0^*E\end{equation} of bundles over $M^1=M\times \R$, where as discussed in \S\ref{VF} $p_0=\Phi$ and $p_1$ is projection onto $M$.  Equivalently, the isomorphism $\hat{\Phi}$ may be encoded as a global section of the bundle
\begin{equation} \delta E=p_0^*E\otimes p_1^*E^{\vee}.\end{equation}

%\begin{definition} The category $\L_{\CC}(\Phi)$ of smooth symmetries of $\CC$ lifting $\Phi$ is $\Hom(\pi_M^*\CC,\Phi^*\CC).$
%\end{definition}  

%We now describe the construction of a ``differentiation functor" $D:\L_{\CC}(\Phi)\to \L_{\CC}(\xi)$.  For each open subset $U\subset M$, let $\delta(U)=\pi_M^{-1}(U)\cap \Phi^{-1}(U)$, and note that we have a natural inclusion $\iota:U\hookrightarrow \delta(U)$. Recall that we have functors 
%\begin{equation} \pi_M^{-1}:\CC(U)\to \pi_M^{-1}\CC(U\times \R) \end{equation} and 
%\begin{equation} \Phi^{-1}:\CC(U)\to \Phi^{-1}\CC(\Phi^{-1}(U)).\end{equation}  Given $\Psi\in \L_{\CC}(\Phi)$, consider the functor $D\Psi:\CC(U)\to \textrm{B}(\ul{i\R}_U)$ given by 
%\begin{equation} P\mapsto \iota^*(\iota{\partial_t}d\log[\ul{\Hom}(\Psi(p_1^{-1}P)|_{\delta(U)},\Phi^{-1}(Q)|_{\delta(U)}).\end{equation}

 %Equivalently, using the notation 
%\begin{equation} p_0=\Phi:M\times \R\to M \end{equation} and 
%\begin{equation} p_1=\pi_M:\Phi:M\times \R\to M\end{equation} we can form the bundle  
%\begin{equation} \delta E=p_0^*E\otimes p_1^*E^{\vee}.\end{equation} A smooth 1-parameter family of symmetries of $E$ lifting $\{\varphi_t\}$ is then a smooth section of $\delta E$.  Similarly, given a gerbe $\CC$ over $M$ we can form a gerbe $\delta \CC$ over $M$.

To generalize to gerbes, recall from \S\ref{gerbes}, that in addition to forming the inverse image of a gerbe, we may define the tensor product of two gerbes and the dual $\CC^{\vee}$ of a gerbe $\CC$ using the associated gerbe construction.  Thus we may define a gerbe 
\begin{equation} \delta\CC=p_0^*\CC\otimes p_1^*\CC^{\vee}\end{equation} over $M^1$.  Recall also from \S\ref{VF} that for each open set $U\in M$ we define an open set 
\begin{equation} \nu(U)=p_0^{-1}(U)\cap p_1^{-1}(U)\subset M^1.\end{equation} Given any sheaf $\F$ over $M^1$, we can define a sheaf $\nu_*\F$ over $M$ by 
\begin{equation} \nu_*\F(U)=F(\nu(U))\end{equation} for any open set $U\subset M$; note that this definition makes sense both for sheaves taking values in a category (e.g. the category of groups) and for gerbes.  We can also define a homomorphism of sheaves of groups over $M$ 
\begin{equation} \delta_{\T}:\ul{\T}_{M}\to \nu_*(\ul{\T}_{M^1})\end{equation} by 
\begin{equation} \label{delta 1 g} \delta_{\T} g=(p_0^*g)(p_1^*g^{-1}).\end{equation} 
\begin{rem}\label{delta canonical} By construction we have a canonical morphism of gerbes 
\begin{equation} \delta_{\CC}:\CC\to \nu_*\delta\CC\end{equation} intertwining $\delta_{\T}$.
\end{rem}

\begin{definition}\label{def 1 par symmet gerbe cat} The category of smooth familes of symmetries of $\CC$ lifting $\Phi$ is the category $\L_{\CC}(\Phi)$ of global sections of $\delta \CC$ over $M$.
\end{definition}  
\begin{rem} Definition (\ref{def 1 par symmet gerbe cat}) is similar to that of an $\R$-equivariant gerbe over $M$; see \cite{Br2}, \cite{G2}, \cite{Mein} for general discussions of equivariant gerbes.  In particular, an extension of $\CC$ to an $\R$-equivariant gerbe gives an example of a symmetry of $\CC$ lifting $\Phi$ in the sense of definition (\ref{def 1 par symmet gerbe cat}).  
\end{rem}

\subsection{The differentiation functor}

We now explain the construction of a functor $\L_{\CC}(\Phi)$ to $\L_{\CC}(\xi)$, the category of global infinitesimal symmetries of $\CC$ lifting the vector field $\xi$.  Our approach parallels the discussion in \S{\ref{VF}}; in particular the reader may find it useful to compare the constructions in this section to the derivation of equation (\ref{f dependence}).  %Roughly speaking the construction is as follows. Given a lift $\hat{\Phi}=S\in \L_{\CC}(\Phi)$, we wish to construct a morphism $F_{\tx}=D(\hat{\Phi})$ from $\CC\to \textrm{B}(\ul{i\R}_M)$ intertwining $\iota_{\xi}d\log$. For each $P\in \CC(U)$,  $F_{\tx}(P)\in \tor_{\ul{i\R}_U}$ will be the restriction of  
% \begin{equation} \iota_{\frac{d}{dt}}d\log[\ul{\Hom}(S|_{\nu(U)},\delta P)] \end{equation} to $U$ with respect to the inclusion of $U\hookrightarrow \nu(U)$.  Given another object $Q\in \CC(U)$ and $\psi:P\to Q$, $F_{\tx}(\psi):F_{\tx}(P)\to F_{\tx}(Q)$ is the restriction of 
% \begin{equation} \iota_{\frac{d}{dt}}d\log[\delta\psi_*]\end{equation} to $U$, where 
% \begin{equation} \delta\psi_*:\Hom(S|_{\nu(U)},\delta P)\to \Hom(S|_{\nu(U)},\delta Q)\end{equation} is given by composition with $\delta\psi$.
  
Given $\hat{\Phi}\in\L_{\CC}(\Phi)$ will construct $D(\hat{\Phi})=F_{\tx}$ as a composition of 1-morphisms which are described in the following lemma.  The existence of these 1-morphisms is a straightforward consequence of the definitions in  appendix B, and the proof will be omitted.  
%\begin{rem}  Given a gerbe $\CC$ over $M$, let $\Gamma(\CC)$ denote the category of global sections of $\CC$.  Since $\Gamma(\CC)$ is a groupoid, recall that there is a canonical isomorphism 
%\begin{equation} \Gamma(\CC)\tocong \Gamma(\CC)^{op}. \end{equation}  This isomorphism is the identity on the level of objects, and sends each morphism to its inverse.
%\end{rem}

 \begin{lemma}\label{some 1-morphisms}\begin{enumerate}[(1)] \item  Let $S$ be a global section of a DD gerbe $\CC$ over a manifold $M$.  Then there is a 1-morphism of gerbes (intertwining the identity on $\ul{\T}_M$) 
 \begin{equation} \ul{\Hom}(\cdot,S):\CC^{op}\to \BT\end{equation} sending 
 \begin{equation} Q\mapsto \ul{\Hom}(Q,S|_U) \end{equation} for each local section $Q\in \CC(U)$.  Moreover, the assignment 
 \begin{equation} S\mapsto \ul{\Hom}(\cdot,S)\end{equation} defines an equivalence of categories
 \begin{equation} \Gamma(\CC)\tocong \Hom(\CC^{op},\textrm{B}(\ul{\T}_M)),\end{equation} where $\Gamma(\CC)$ denotes the category of global sections of $\CC$.
 \item Let $i:N\to M$ be the inclusion of a submanifold.  Then there is a \emph{restriction} 1-morphism 
 \begin{equation} i^*[\cdot]:\BiR\to i_*\textrm{B}(\ul{i\R}_N) \end{equation} intertwining the restriction homomorphism 
 \begin{equation} \ul{i\R}_M\to i_*\ul{i\R}_N \end{equation} sending $f\in \ul{i\R}_M(U)$ to the restriction of $f$ to $U\cap N$.  
 \end{enumerate}
 \end{lemma}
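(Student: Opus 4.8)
The plan is to handle the two parts separately, building each $1$-morphism directly from the $\ul{\Hom}$-torsor and associated-torsor structures already in hand, and then establishing the equivalence in part (1) as a gerbe-theoretic Yoneda (representability) statement. For the construction in part (1), I would fix $S\in\Gamma(\CC)$ and, for each open $U\subset M$, define a functor $\CC^{op}(U)\to\tor_{\ul{\T}_U}$ by $Q\mapsto\ul{\Hom}(Q,S|_U)$; this is a $\ul{\T}_U$-torsor by Remark \ref{morphism torsor}. A morphism $Q\to Q'$ in $\CC^{op}(U)$ is a morphism $\psi\colon Q'\to Q$ in $\CC(U)$, and I send it to the torsor map $\phi\mapsto\phi\circ\psi$. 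The one nontrivial point is that this is $\ul{\T}_U$-equivariant: with the action $\phi\cdot g=\phi\circ\alpha_Q(g)$ one computes $(\phi\cdot g)\circ\psi=(\phi\circ\psi)\cdot g$ precisely because $\alpha_Q(g)\circ\psi=\psi\circ\alpha_{Q'}(g)$, which is the band-compatibility built into Definition \ref{def gerbe}. The coherence isomorphisms promoting this family of functors to a pseudo-natural transformation come from the canonical identifications $\ul{\Hom}(Q,S|_U)|_V\cong\ul{\Hom}(i^*Q,S|_V)$ valid because $\CC$ is a stack with sheaf-valued $\ul{\Hom}$, their cocycle condition being inherited from the coherence data of $\CC$. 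Finally I would check the intertwining condition (\ref{gerbe intertwine}) against the identity on $\ul{\T}_M$: the automorphism of $Q$ in $\CC^{op}$ corresponding to $g$ is, as a $\CC$-morphism, $\alpha_Q(g)$, and the functor sends it to $\phi\mapsto\phi\circ\alpha_Q(g)$, i.e. translation by $g$; the passage to $\CC^{op}$ is exactly what absorbs the sign so that the identity rather than inversion is intertwined.

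For the ``moreover'' clause I would show $S\mapsto\ul{\Hom}(\cdot,S)$ is an equivalence $\Gamma(\CC)\to\Hom(\CC^{op},\textrm{B}(\ul{\T}_M))$ by verifying full faithfulness and essential surjectivity. A global isomorphism $S\to S'$ induces a modification of the two $1$-morphisms by postcomposition; full faithfulness follows by evaluating a modification $\tau$ on the object $S$ and tracking the image of $\textrm{id}_S$, which recovers a unique global isomorphism $S\to S'$ inducing $\tau$ (using that $\ul{\Hom}(S,S')$ is a sheaf). Essential surjectivity is the heart of the matter: given a $1$-morphism $F\colon\CC^{op}\to\textrm{B}(\ul{\T}_M)$ intertwining the identity, I would choose a cover $\{U_i\}$ on which $\CC$ has objects, use the local representability already recorded in Remark \ref{morphism torsor} to obtain local sections $S_i\in\CC(U_i)$ with $F|_{U_i}\cong\ul{\Hom}(\cdot,S_i)$, and glue the $S_i$ into a global $S$ using the overlap data of the pseudo-natural transformation $F$ together with the descent axiom for the stack $\CC$.

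Part (2) is considerably more elementary and I would dispatch it by the associated-torsor machinery. Applying the sheaf version of the associated-torsor construction (Definition \ref{ass tor sheaf}, Corollary \ref{ass tor fun}) to the restriction homomorphism $\ul{i\R}_M\to i_*\ul{i\R}_N$ sends, over each $U$, an $\ul{i\R}_U$-torsor to an $\ul{i\R}_{U\cap N}$-torsor; combined with the canonical identification of $i_*\textrm{B}(\ul{i\R}_N)$ with the gerbe $U\mapsto\tor_{\ul{i\R}_{U\cap N}}$, Proposition \ref{ass gerbe coh} then supplies the desired $1$-morphism $i^*[\cdot]\colon\BiR\to i_*\textrm{B}(\ul{i\R}_N)$, and its intertwining with the restriction homomorphism is immediate, with every coherence datum coming from functoriality of the associated-torsor construction.

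The main obstacle is the essential-surjectivity step of part (1): producing a global section of $\CC$ from an abstract $1$-morphism requires genuinely gluing the locally representing objects $S_i$, and the delicate bookkeeping will be extracting transition isomorphisms $S_i|_{U_{ij}}\cong S_j|_{U_{ij}}$ from $F$ and checking that they satisfy the cocycle condition needed to invoke the stack gluing axiom. As the excerpt itself suggests, the cleanest route is to model this argument on the descent proof of Proposition (5.3.2) in \cite{Br1}.
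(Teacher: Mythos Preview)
Your approach is correct and is essentially the natural unpacking of the definitions; the paper itself omits the proof entirely, stating only that ``the existence of these 1-morphisms is a straightforward consequence of the definitions in appendix B.'' Your outline for part (1)---building the functor via Remark \ref{morphism torsor}, checking equivariance from the band-compatibility in Definition \ref{def gerbe}, and establishing the equivalence by a Yoneda-type argument with essential surjectivity via local representability and stack descent---and for part (2) via the associated-torsor machinery of Proposition \ref{ass gerbe coh}, are exactly the standard arguments the paper is gesturing at.
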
  
 %\begin{notation} Given global sections $R,S\in\Gamma(\CC)$ and $\psi:R\to S$, we will denote by $\psi^*$ the corresponding morphism
 %\begin{equation} \ul{\Hom}(\cdot)\to \ul{\Hom}(S,\cdot).\end{equation}
 %\end{notation}
 
 \begin{notation} For convenience, let $\frac{d}{dt}\log:\ul{\T}_{M^1}\to \ul{\T}_{M^1}$ denote the homomorphism given by 
 \begin{equation} g \mapsto \iota_{\frac{d}{dt}}d\log g.\end{equation}
 \end{notation}  
 
 \begin{rem}Recall from the discussion before definition (\ref{def symmetry gerbe}) that there is a canonical equivalence 
 \begin{equation} \Hom_{\iota_{\xi}d\log}(\CC^{op},\textrm{B}(\ul{i\R}_M))\tocong \Hom_{-\iota_{\xi}d\log}(\CC,\textrm{B}(\ul{i\R}_M))\end{equation} which is actually a bijection on the level of both objects and morphisms.  For convenience we will work here with $\Hom_{\iota_{\xi}d\log}(\CC^{op},\textrm{B}(\ul{i\R}_M))$, which we will call $\tilde{\L}_{\CC}(\xi)$.
 \end{rem}
 
 \begin{definition}\label{def diff fun} $D:\L_{\CC}(\Phi)^{op}\to \tilde{\L}_{\CC}(\xi)$ is the functor
 \begin{equation} S\mapsto i^*[\frac{d}{dt}\log[\ul{\Hom}(\delta(\cdot),S)]]\end{equation} for $S\in \Gamma(\delta\CC)=\L_{\CC}(\Phi)$.
 \end{definition}
  
\begin{rem}  More formally, for each $S\in \Gamma(\delta \CC)$, $D(S)$ is the composition of 1-morphisms
 \begin{equation}\xymatrix{ \CC^{op} \ar[r]^-{\Phi_1}& \nu_*(\delta\CC^{op})\ar[r]^{\Phi_2} & \nu_*\textrm{B}(\ul{\T}_{M^1}) \ar[r]^{\Phi_3} & \nu_*\textrm{B}(\ul{i\R}_{M^1}) \ar[r]^{\Phi_4} & \BiR. }\end{equation}

Here $\Phi_1$ is the 1-morphism described in remark (\ref{delta canonical}), $\Phi_2$ is $\nu_*$ applied to the 1-morphism $\ul{\Hom}(\cdot,S)$ described in part (1) lemma (\ref{some 1-morphisms}), $\Phi_3$ is $\nu_*(\frac{d}{dt}\log)[\cdot]$. $\Phi_4$ is given by applying $\nu_*$ to the restriction 1-morphism described in part (2) of \ref{some 1-morphisms}; note that $\Phi_4$ is a 1-morphism from  
\begin{equation} \nu_*\textrm{B}(\ul{\T}_{M^1})\to \nu_*i_*\textrm{B}(\ul{i\R}_M)=\textrm{B}(\ul{i\R}_M).\end{equation}  

%Given a morphism $\psi:R\to S$ in $\L_{\CC}(\Phi)$, $D(\psi^{op})$ is 
%\begin{equation} i^*[\frac{d}{dt}[\psi^*]].\end{equation} 
 We must  check that $D(S)$ does in fact intertwine the homomorphism $\iota_{\xi}d\log$.  The composition $\Phi_2\circ\Phi_1$ intertwines 
\begin{equation} \delta_{\T}:\ul{\T}_M\to \nu_*\ul{\T}_{M^1}.\end{equation} 
$\Phi_3$ intertwines $\nu_*\frac{d}{dt}\log$, and $\Phi_4$ intertwines the restriction homomorphism $\nu_*\ul{\T}_{M^1}\to \ul{\T}_M$.  Let $\zeta$ denote the composition 
\begin{equation} i^*\circ \nu_*(\frac{d}{dt}\log)\circ \delta_{\T}:\ul{\T}_M\to \ul{i\R}_M.\end{equation} Given $g:U\to \T$ we have
\begin{equation} \zeta(g)=\frac{d}{dt}\log(g(p_0)g^{-1}(p_1))|_{U}.\end{equation}  More concretely, the value of $\zeta(g)$ on $x\in U$ is given by 
\begin{align} & \frac{d}{dt}|_{t=0}\log(g(\varphi_t(x))g^{-1}(x)) \\ = & \frac{d}{dt}|_{t=0}\log(g(\varphi_t(x)) \notag \\ = & \iota_{\xi}d\log(g)(x).\notag\end{align}
\end{rem}

 %To see this, let $F$ denote $D(S)$. Given $\psi:P\to Q$ in $\CC(U)$, let $\zeta:S|_{\nu(U)}\to \delta P$ denote a section of $\ul{\Hom}(S,\delta P)$.  By definition
 %\begin{equation} \label{image psi} F(\psi):(\frac{d}{dt}\log[\zeta])|_U\mapsto (\frac{d}{dt}\log[\delta\psi\circ \zeta])|_U.\end{equation}  Given $g:U\to \T$, note that 
 %\begin{align}\label{image psig} F(\psi\cdot g): (\frac{d}{dt}\log[\zeta])|_U & \mapsto  \left(\frac{d}{dt}\log[\delta(\psi\cdot g) \circ \zeta]\right)|_U \\ & = \left(\frac{d}{dt}\log[(\delta\psi)\cdot \delta g\circ \zeta]\right)|_U\notag\\ & =  \left(\frac{d}{dt}\log[(\delta\psi\circ \zeta)\cdot \delta g]\right)|_U\notag \\ & =  \left(\frac{d}{dt}\log[\delta^1\psi\circ \zeta]\right)|_U+\frac{d}{dt}\log(\delta g)|_U.\notag\end{align}  Comparing equations (\ref{image psi}) and (\ref{image psig}), we see that
 %\begin{equation}\label{F dep} F(\psi\cdot g)=F(\psi)+(\frac{d}{dt}\log(\delta g))|_U.\end{equation}  Recall from example (\ref{delta 1 g}) that \begin{equation} (\delta g)(x,t)=g(p_0(x,t))g^{-1}(p(x,t))=g(\varphi_t(x))g^{-1}(x),\end{equation} and therefore 
 %\begin{equation}\label{ddt} (\frac{d}{dt}\log(\delta g))|_U=\pounds_{\xi}\log(g)=\iota_{\xi}d\log(g).   \end{equation} Combining (\ref{F dep}) and (\ref{ddt}) we obtain 
 %\begin{equation} F(\psi\cdot g)=F(\psi)+\iota_{\xi}d\log(g).\end{equation}
% \end{rem}
 
 \subsection{Gerbes with connective structures}
 
In section \ref{local flows conn} we discussed families of connective symmetries of a gerbe $\CC$ with connective structure $\A$ using relative 1-forms.  To explain why, let us briefly turn to the case of a $\T$-bundle $E\to M$ with connection $\Theta$.   Given a symmetry $\Phi:E\to E$ covering $\varphi:M\to M$, $\Phi$ is a symmetry of the pair  $(E,\Theta)$ if 
\begin{equation} \label{compatible connection} \Phi^*\Theta=\Theta,\end{equation} where $\Theta$ is viewed as a $1$-form on the total space of $E$.  Given a family of symmetries $\{\Phi_t\}$ covering $\{\varphi_t\}$ (or more generally any Lie group of symmetries $G$), there are two possible ways in which we might wish $\{\Phi_t\}$ to be compatible with $\Theta$.  On the one hand, we might ask that for each $t$, $\Phi_t$ is a symmetry of $(E,\Theta)$.  In terms of the vector field $\tx$ corresponding to $\{\Phi_t\}$, this is equivalent to requiring
\begin{equation}\label{connection preserving}\pounds_{\tx}\Theta=0.\end{equation}  On the other hand, we might want a definition such that, in the good case that the quotient of $M$ by the symmetry is a manifold, a lift of $\{\varphi_t\}$ to $(E,\Theta)$ is equivalent to a bundle with connection over the quotient $M/\R$.  In this case, in addition to requiring (\ref{connection preserving}), we must also  have 
\begin{equation} \label{quotient connection} \iota_{\tx}\Theta=0.\end{equation}  

This distinction can be understood in the simplicial language introduced in the previous section.  Recall that a 1-parameter family of symmetries $\{\Phi_t\}$ of $E$ covering $\{\varphi_t\}$ is equivalent to a global section $S$ of $\delta E=(p_1^{-1}E)^*\otimes p_0^{-1}E$ over $M^1$.  The connection $\Theta$ on $E$ determines a connection $\delta\Theta$ over $\delta E$, and conditions (\ref{connection preserving}) and (\ref{quotient connection}) are satisfied if and only if \begin{equation} \alpha=A_{\delta\Theta}S=0,\end{equation} i.e. if and only if the section $S$ is flat.  On the other hand, the condition (\ref{connection preserving}) by itself holds if and only if the restriction of $\alpha$ to $M_t=M\times\{t\}$ vanishes for each $t$, i.e. if and only if the relative 1-form determined by $\alpha$ is zero.

In this paper we will be interested only in the condition (\ref{connection preserving}) and the analogous notion for gerbes\footnote{For a discussion of the relationship between equivariant gerbes and gerbes over quotients, see \cite{G2}.}. We can define a \emph{relative connective structure} on a gerbe over $M\times \R$ by replacing 1-forms with relative 1-forms in definition (\ref{def conn str}).

  Let us generalize definition (\ref{def 1 par symmet gerbe cat}) to the situation where $\CC$ has a connective structure $\A$.  We first observe that the gerbe $\delta \CC$ over $M^1$ naturally inherits a relative connective structure, which we denote $\delta \A$.  We will sketch the construction, which is similar to that on page 211 of \cite{Br1}.  For the related notion of an equivariant gerbe with connective structure, we refer the reader to \cite{Br1} and \cite{G2}.   By construction, given local sections $Q_0\in \CC(U_0)$ and $Q_1\in \CC(U_1)$, we have an object 
  \begin{equation} \label{delta object}p_0^*Q_0\otimes p_1^*Q^{\vee}_1\end{equation} of $\delta\CC$.  We then define 
$ \delta\A(p_0^*Q_0\otimes p_1^*Q^{\vee}_1)$ to be 
\begin{equation} q[p_0^*\A(Q_0)\boxminus p_1^*(Q_1)],\end{equation} where $q:\ul{i\Omega}^1_{M^1}\to \ul{i\Omega}^1_{M^1,rel}$ is the quotient map.  We then complete the construction of $\delta \A$ using the fact that every object of $\delta \CC$ is locally isomorphic to one of the form (\ref{delta object}).  Note that for every object $Q\in \CC(U)$ and every connection $\mu\in \A(Q)$, we obtain a relative connection on $\delta Q$, which we denote by $\delta\mu$.  Furthermore, given any 1-form $\alpha\in i\Omega^1(U)$ we have 
\begin{equation} \delta^i(\mu+\alpha)=\delta^i(\mu)+p_0^*\alpha-p_1^*\alpha,\end{equation}where the relative 1-forms on the right hand side are implicitly restricted to the appropriate open set.  More formally, we have the following.
\begin{lemma} \label{delta 2 morphism} There is a 2-morphism
\begin{equation}\label{delta conn 2-mor} \delta_{\ul{i\Omega}^1_{rel}}[\A ]\Rightarrow \nu_*(\delta\A). \end{equation} \end{lemma}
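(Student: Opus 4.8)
The plan is to construct the 2-morphism (\ref{delta conn 2-mor}) locally on objects of the form (\ref{delta object}) and then glue, exploiting the fact that every object of $\delta\CC$ is locally isomorphic to such a product object. Recall that a 2-morphism between two 1-morphisms of gerbes $\CC\to\textrm{B}(\ul{i\Omega}^1_{M^1,rel})$ is, by definition (\ref{2-morphism}), a modification: it assigns to each object $Q$ of $\delta\CC$ over an open set an isomorphism of the corresponding $\ul{i\Omega}^1_{M^1,rel}$-torsors, compatibly with morphisms in $\delta\CC$ and with restriction. So I must produce, for each object $P$ of $\delta\CC$ over an open set $W\subset M^1$, an isomorphism of torsors
\begin{equation}
\delta_{\ul{i\Omega}^1_{rel}}[\A](P)\tocong (\nu_*(\delta\A))(P),
\end{equation}
and check naturality.

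First I would treat the model objects $P=p_0^*Q_0\otimes p_1^*Q_1^{\vee}$ with $Q_0\in\CC(U_0)$, $Q_1\in\CC(U_1)$. On the left, unravelling proposition (\ref{ass gerbe coh}) and the definition of $\delta_{\ul{i\Omega}^1_{rel}}$, the torsor $\delta_{\ul{i\Omega}^1_{rel}}[\A](P)$ is the $\ul{i\Omega}^1_{M^1,rel}$-torsor associated via the homomorphism $\delta_{\ul{i\Omega}^1_{rel}}=p_0^*(\cdot)-p_1^*(\cdot)$ to the value of the connective structure $\A$ on the components; since $\A$ intertwines $-d\log$, applying the associated-torsor construction to $\A(Q_0)$ and $\A(Q_1)$ through $\delta_{\ul{i\Omega}^1_{rel}}$ yields precisely $q[p_0^*\A(Q_0)\boxminus p_1^*\A(Q_1)]$, where $q$ is the quotient to relative forms. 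On the right, by the construction of $\delta\A$ recalled just before the lemma, the torsor $\delta\A(\delta Q_0\otimes\cdots)$ — i.e. $\delta\A$ evaluated on the same model object — is defined to be exactly $q[p_0^*\A(Q_0)\boxminus p_1^*\A(Q_1)]$. Thus on model objects both sides are literally the same torsor, and I take the identity isomorphism. The main content is then to verify that this identification is compatible with the gauge behavior: for a $\T$-valued transition function $g$ relating two choices of product decomposition, the left side transforms by $\delta_{\ul{i\Omega}^1_{rel}}$ applied to $-d\log g$ (since $\A$ intertwines $-d\log$), while the right side transforms by the relative-form image of $p_0^*(-d\log g)-p_1^*(-d\log g)$; these agree by definition of $\delta_{\ul{i\Omega}^1_{rel}}$ and equation (\ref{delta 1 g}).

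Next I would extend to arbitrary objects by descent. Since every object $P$ of $\delta\CC$ over $W$ is locally isomorphic to a model object, cover $W$ by open sets $W_\alpha$ on which $P|_{W_\alpha}\cong P_\alpha$ with $P_\alpha$ of product form, define the torsor isomorphism on each $W_\alpha$ via the model case transported along the chosen local isomorphism, and check that on overlaps $W_{\alpha\beta}$ the two prescriptions differ by an automorphism that the gauge computation above shows to be trivial; the modification axiom and the sheaf (stack) property of $\ul{\Hom}$-presheaves from \S\ref{gerbes} then guarantee these glue to a global torsor isomorphism. Finally, naturality under morphisms $\psi:P\to P'$ in $\delta\CC$ reduces, again by local triviality, to the model case, where it follows from property (2) of the connective-structure intertwining diagram (\ref{gerbe intertwine}) together with the functoriality of the associated-torsor construction in corollary (\ref{ass tor fun}). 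The main obstacle I anticipate is purely bookkeeping: keeping the relative-versus-absolute distinction straight and confirming that the quotient map $q$ commutes with $\delta_{\ul{i\Omega}^1_{rel}}$ and with restriction to the fibers $U_{\vec t}$, so that the identity on model objects genuinely respects the relative structure; the substantive geometric input — that $\A$ intertwines $-d\log$ and hence $\delta\A$ is built from the same data as $\delta_{\ul{i\Omega}^1_{rel}}[\A]$ — is already encoded in the construction of $\delta\A$, so no new estimate or transcendental argument is needed.
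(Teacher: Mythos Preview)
Your argument is essentially correct in substance, and it matches what the paper does, but you have misidentified the domain of the 2-morphism. Both 1-morphisms in (\ref{delta conn 2-mor}) have domain $\CC$ (more precisely $\CC^{op}$, as in the large diagram following definition (\ref{1 par conn lift})), not $\delta\CC$: the left side is $\delta_{\ul{i\Omega}^1_{rel}}[\cdot]\circ\A$ and the right side is $\nu_*(\delta\A)\circ\delta_{\CC}$, where $\delta_{\CC}:\CC\to\nu_*\delta\CC$ is the canonical morphism of remark (\ref{delta canonical}). So the modification is indexed by objects $Q\in\CC(U)$, not by objects $P\in\delta\CC(W)$; the expression $\delta_{\ul{i\Omega}^1_{rel}}[\A](P)$ for $P\in\delta\CC$ does not parse. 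Your ``model objects'' $p_0^*Q_0\otimes p_1^*Q_1^{\vee}$ with $Q_0\neq Q_1$ are therefore irrelevant---only the diagonal objects $\delta Q$ arise.

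Once you correct this, the proof collapses to exactly the paper's (implicit) argument, which is the sentence preceding the lemma: the assignment $\mu\mapsto\delta\mu$ from $\A(Q)$ to $\delta\A(\delta Q)$ satisfies $\delta(\mu+\alpha)=\delta\mu+\delta_{\ul{i\Omega}^1_{rel}}(\alpha)$, hence intertwines $\delta_{\ul{i\Omega}^1_{rel}}$, and so by the universal property of the associated torsor induces the required isomorphism $\delta_{\ul{i\Omega}^1_{rel}}[\A(Q)]\tocong\delta\A(\delta Q)$. No descent step over $\delta\CC$ is needed; naturality in $Q$ and compatibility with restriction follow directly from the construction of $\delta\A$ and the functoriality in corollary (\ref{ass tor fun}). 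Your gauge-compatibility check is the right computation, just carried out at the wrong level of generality.
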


%\begin{rem} Let us consider the 2-morphism (\ref{delta conn 2-mor}) more concretely.  For every $Q\in\CC(U)$, the 2-morphism (\ref{delta conn 2-mor}) gives a natural map
%\begin{equation} \A(Q)\to (\delta^i\A)(\delta^i_{\CC}Q)\end{equation} intertwining \begin{equation} \delta_{\ul{i\Omega}^1_{rel}}^i:\ul{i\Omega}^1_{M^i}\to \nu^i_*(\ul{i\Omega}^1_{M^{i+1}}).\end{equation}  In other words, for every local section $\mu$ of $\A(Q)$, we obtain a local section $\delta^i(\mu)\in (\delta^i\A)(\delta^iQ)$, such that for each local (relative) 1-form $\alpha$ we have 
%\begin{equation} \delta^i(\mu+\alpha)=\delta^i(\mu)+p_0^*\alpha-p_1^*\alpha,\end{equation}where the relative 1-forms on the right hand side are implicitly restricted to the appropriate open set.
%\end{rem}

\begin{definition} \label{1 par conn lift} A \emph{connective lift} of $\Phi$ to $\CC$ is a pair $(S,\mu_S)$, where $S$ is a global section of $\delta\CC$, and $\mu_S$ is a global section of $\delta\A(S)$. Given another connective lift $(S',\mu_{S'})$, an \emph{equivalence of connective lifts} from $(S,\mu)\to (S',\mu_{S'})$ is an isomorphism  $\tau:S\to S'$ such that $\tau_*(\mu_S)=\mu_{S'}$.  We will denote the corresponding category by $\L_{(\CC,\A)}(\Phi)$.
\end{definition}

In the previous section we constructed a functor from the category of 1-parameter lifts of $\Phi$ to $\CC$ to the category of global lifts of the corresponding vector field $\xi$ to $\CC$, or rather to the equivalent category 
\begin{equation} \tilde{\L}_{\CC}=\Hom_{\iota_{\xi}d\log}(\CC^{op},\textrm{B}(\ul{i\R}_M).\end{equation}
Suppose $\CC$ has a connective structure $\A$, which we recall is a 1-morphism from $\CC$ to $\textrm{B}(\ul{i\Omega}^1_M)$ intertwining $-dlog$.  Equivalently, $\A$ determines a 1-morphism 
\begin{equation} \tilde{A}\in \Hom_{d\log}(\CC^{op},\textrm{B}(\ul{i\Omega}^1_M)).\end{equation}  We may then define the category $\tilde{\L}_{(\CC,\A)}$ which is equivalent to the category of connective lifts $\L_{(\CC,\A)}$.  An object of $\tilde{\L}_{(\CC,\A)}$ is an element $F_{\tx}\in\tilde{\L}_{\CC}$ together with a 2-morphism $\Theta_{\tx}:-\pounds_{\xi}[\tilde{\A}]\toocong \tilde{A}^0_{\ul{i\R}_M}\circ F_{\tx}$.

Recall from part (1) of lemma (\ref{some 1-morphisms}) that for each global section $S$ of $\delta \CC$ over $M\times \R$, we have a 1-morphism of gerbes 
   \begin{equation} \ul{\Hom}(\cdot,S):\delta\CC^{op}\to \textrm{B}(\ul{\T}_{M^1}).\end{equation} 
Furthermore, if we fix $\mu_S\in \delta \A(S)$, then for every object $P\in \delta \CC(U)$ and every relative connection $\mu\in \delta\A(P)$, we obtain a relative connection on $\ul{\Hom}(P,S)$.  This connection, which we denote by $\mu_S-\mu$, is described explicitly as follows:  given a local section $\psi\in\ul{\Hom}(P,S)$, define
\begin{equation}\label{mu_S-mu} A_{\mu_S-\mu}(\psi)=\mu_S-\psi_*\mu,\end{equation} where by definition the right hand side is the unique 1-form $\alpha$ such that $\mu_S=\psi_*\mu+\alpha.$

Next, given a relative connection $\Theta$ on a $\T$-bundle $E\to U\subset M^1$, recall from remark (\ref{lie conn}) that we can construct a (relative) connection on $\iota_{\frac{d}{dt}}d\log[E]$, which we call $\pounds_{\frac{d}{dt}}(\Theta)$.  This is characterized by the formula \begin{equation} \label{der conn}A_{\pounds_{\frac{d}{dt}}(\Theta)}(\iota_{\frac{d}{dt}}d\log[\sigma]+f)=\pounds_{\frac{d}{dt}}A_{\Theta}(\sigma)+df.\end{equation}  
Finally, note that given a relative connection $\Theta$ on a principal $\T$-bundle $P$ over some open set $U\subset M\times \R$, we can restrict $\Theta$ to a connection on the restriction of $P$ to $U\cap (M\times \{0\})$.  The following lemma summarizes the above discussion in more formal language.

\begin{lemma}\label{conn str diff}
\begin{enumerate}[(1)]\item Let $S$ be a global section of $\delta \CC$ over $M^1$, and let $\ul{\Hom}(\cdot,S):\delta \CC^{op}\to \textrm{B}(\ul{\T}_{M^1})$ be the 1-morphism described in (\ref{some 1-morphisms}). For fixed $\mu_S\in \delta\A(S)$, there is a 2-morphism \begin{equation} -1[\tilde{\A}] \toocong  \tilde{\A}^0_{\ul{\T}_{M^1},rel}\circ \ul{\Hom}(\cdot,S) \end{equation} given by formula (\ref{mu_S-mu}).
\item There is a 2-morphism 
\begin{equation} \pounds_{\frac{d}{dt}}(\cdot):\pounds_{\frac{d}{dt}}[\tilde{\A}^0_{\ul{\T}_{M^1},rel}]\toocong \tilde{\A}^0_{\ul{i\R}_{M^1},rel}\circ \frac{d}{dt}\log[\cdot].\end{equation}
\item There is a 2-morphism 
\begin{equation} i^*[\tilde{\A}^0_{\ul{\T}_{M^1},rel}]\toocong i_*(\tilde{\A}^0_{\ul{\T}_M}\circ i^*[\cdot]).\end{equation}
\end{enumerate}
\end{lemma}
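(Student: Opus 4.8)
The plan is to treat the three parts uniformly, using the fact that each asserts a 2-morphism between two 1-morphisms into a classifying gerbe $\textrm{B}(A)$. By definition (\ref{2-morphism}) such a 2-morphism is a family of isomorphisms of $A$-torsors, one per object, natural in the object and compatible with restriction. Moreover, by the sheaf form of the universal property (\ref{ass tor universal}), to produce a torsor isomorphism out of an associated torsor it suffices to exhibit, for each object $Q$, a map \emph{out of the relevant torsor of (relative) connections} that intertwines the appropriate homomorphism of sheaves of (relative) 1-forms; such a map then descends canonically to the required isomorphism. So in each case I would (i) write down the intertwining map given by the stated formula, (ii) check it is well-defined and intertwines the correct homomorphism, and (iii) check naturality in $Q$ together with compatibility with restriction.

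I would dispose of parts (2) and (3) first, since their content is essentially already in hand. For (2), the map is $\Theta\mapsto\pounds_{\frac{d}{dt}}(\Theta)$ of remark (\ref{lie conn}); that remark, via the Cartan formula $\pounds_{\frac{d}{dt}}=d\iota_{\frac{d}{dt}}+\iota_{\frac{d}{dt}}d$, already guarantees that $\pounds_{\frac{d}{dt}}(\Theta)$ is a well-defined relative connection on $\iota_{\frac{d}{dt}}d\log[E]$, and its defining formula shows at once that the assignment intertwines the Lie-derivative homomorphism $\pounds_{\frac{d}{dt}}$ on relative 1-forms; naturality is immediate, as the construction is functorial in $E$. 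For (3), the map is restriction $\Theta\mapsto i^*\Theta$: a relative 1-form on $U^1$ restricts to a genuine 1-form on the slice $M\times\{0\}$ (the relative direction being the only direction surviving on the slice), a relative connection thereby restricts to a connection on $i^*E$, and this plainly intertwines the restriction homomorphism $\ul{i\Omega}^1_{rel}\to i_*\ul{i\Omega}^1_M$ and is natural. In both cases the descended torsor isomorphism is exactly the asserted 2-morphism.

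Part (1) is where the real work lies, so I would carry it out carefully. On an object of the form $p_0^*Q_0\otimes p_1^*Q_1^{\vee}$ as in (\ref{delta object}), both $\delta\A$ (that is, $\tilde{\A}$ in its $\delta\CC^{op}$ form) and the fixed section $\mu_S\in\delta\A(S)$ are given explicitly, and I would define the intertwining map by formula (\ref{mu_S-mu}), namely $\mu\mapsto(\mu_S-\mu)$ where $A_{\mu_S-\mu}(\psi)=\mu_S-\psi_*\mu$ for a local section $\psi$ of $\ul{\Hom}(\cdot,S)$. The checks are then: that $\mu_S-\mu$ is a bona fide relative connection, its gauge behavior under the $\ul{\T}_{M^1}$-action on $\ul{\Hom}(\cdot,S)$ following from the torsor relation satisfied by $\mu_S$; that $\mu\mapsto(\mu_S-\mu)$ intertwines negation, using $\psi_*(\mu+\alpha)=\psi_*\mu+\alpha$ for a relative 1-form $\alpha$; and naturality in $\psi$, which reduces to the coherence already built into $\delta\A$. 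The map then descends by (\ref{ass tor universal}) to an isomorphism $-1[\tilde{\A}]\toocong \tilde{\A}^0_{\ul{\T}_{M^1},rel}\circ\ul{\Hom}(\cdot,S)$ on these objects, and I would extend it to all of $\delta\CC^{op}$ using that every object is locally of the form (\ref{delta object}) and gluing.

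The main obstacle I anticipate is precisely this last step of part (1): propagating the definition coherently over all of $\delta\CC^{op}$. The relative connective structure $\delta\A$ and the datum $\mu_S$ are specified directly only on the special objects (\ref{delta object}), so the bulk of the argument is bookkeeping — tracking how the pushforward $\psi_*$ of relative connections and the quotient $q\colon\ul{i\Omega}^1_{M^1}\to\ul{i\Omega}^1_{M^1,rel}$ interact under a change of object — in order to confirm that the locally defined isomorphisms agree on overlaps and assemble into a genuine modification. Parts (2) and (3), by contrast, should be routine once (1) is in place.
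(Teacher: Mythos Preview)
Your proposal is correct and follows essentially the same approach as the paper: the paper does not give a separate proof of this lemma but instead presents it as a formal summary of the preceding discussion, with the three 2-morphisms arising from formula (\ref{mu_S-mu}), the construction $\pounds_{\frac{d}{dt}}(\Theta)$ of remark (\ref{lie conn}) and equation (\ref{der conn}), and the restriction of relative connections to the slice $M\times\{0\}$, respectively. Your plan spells out in more detail than the paper the verifications (intertwining, naturality, gluing from local objects of the form (\ref{delta object})) that the paper leaves implicit.
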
 

Recall   the construction of the functor $D:\L_{\CC}(\Phi)\to \L_{\CC}(\xi)$ in definition (\ref{def diff fun}): for each global section $S\in \L_{\CC}(\Phi)$ of $\delta \CC$, we constructed the corresponding infinitesimal lift $F_{\tx}=D(S)$ as a composition of 1-morphisms 
\begin{equation}\xymatrix{ \CC^{op} \ar[r]^-{\Phi_1}& \nu_*(\delta\CC)\ar[r]^{\Phi_2} & \nu_*\textrm{B}(\ul{\T}_{M^1}) \ar[r]^{\Phi_3} & \nu_*\textrm{B}(\ul{i\R}_{M^1}) \ar[r]^{\Phi_4} & \BiR. }\end{equation}  Now, given $(S,\mu_S)\in \L_{(\CC,\A)}(\Phi)$, we wish to construct a 2-morphism 
\begin{equation} \Theta_{\cx}:-\pounds_{\xi}[\tilde{\A}]\to \tilde{\A}^0_{\ul{i\R}_M}\circ F_{\tx} \end{equation}  To do so, consider the diagram 
\begin{equation}
\xymatrix{ \CC^{op}  \ar[r]^{\tilde{\A}} \ar[d]_{\Phi_1} \ar@{=>}[dr] &  \textrm{B}(i\Omega^1_M) \ar[d]^{\delta_{i\Omega^1}}  \\ 
\nu_*(\delta\CC^{op}) \ar[r]^{\nu_*\delta\tilde{\A}} \ar[d]_{\Phi_2} \ar@{=>}[dr] & \nu_*\textrm{B}(i\Omega^1_{M^1,rel}) \ar[d]^{id} \\ 
\nu_*\textrm{B}(\ul{\T}_{M^1}) \ar[r]^{\nu_*\tilde{\A}^0_{\ul{\T}_{M^1},rel}} \ar[d]_{\Phi_3} \ar@{=>}[dr] & \nu_*\textrm{B}(i\Omega^1_{M^1,rel}) \ar[d]^{\pounds_{\frac{d}{dt}[\cdot]}}\\
\nu_*\textrm{B}(\ul{i\R}_{M^1}) \ar[r]^{\nu_*\tilde{\A}^0_{\ul{\i\R}_{M^1},rel}} \ar[d]_{\Phi_4} \ar@{=>}[dr] & \nu_*\textrm{B}(i\Omega^1_{M^1,rel})\ar[d]^{i^*[\cdot]} \\
\textrm{B}(\ul{i\R}_M) \ar[r]^{\tilde{\A}^0_{\ul{i\R}_M}}& B(i\Omega^1_M). }\end{equation}  In this diagram double arrows denote the 2-morphisms from lemma (\ref{delta 2 morphism}) and lemma (\ref{conn str diff}).  The counter-clockwise composition of the outer arrows from $\CC^{op}$ to $B(\ul{i\Omega}^1_M)$ is by definition 
\begin{equation} \tilde{\A}^0_{\ul{i\R}_M}\circ F_{\tx}.\end{equation} On the other-hand, consider the vertical composition of 1-morphisms from $B(\ul{i\Omega}^1_M)$ in the upper right-hand corner to $B(\ul{i\Omega}^1_M)$ in the lower right-hand corner, which we call $\Psi$.  By composing 2-morphisms we obtain a 2-morphism 
\begin{equation}\label{comp 2 morphism} \Psi\circ\tilde{\A}\toocong \tilde{\A}^0_{\ul{i\R}_M}\circ F_{\tx}.\end{equation}
Note that $\Psi$ intertwines
\begin{equation} -i^*\circ\pounds_{\frac{d}{dt}}\circ \delta_{i\Omega^1}:i\Omega^1_M\to i\Omega^1_M.\end{equation}  By definition, this takes a 1-form $\alpha$ on an open set $U\subset M$ to 
\begin{equation} -(\pounds_{\frac{d}{dt}}(p_0^*\alpha-p_1^*{\alpha}))|_U=-\pounds_{\xi}\alpha.\end{equation}  Therefore, by proposition (\ref{ass gerbe coh}) we have a canonical 2-morphism from \begin{equation}-\pounds_{\xi}[\cdot]\toocong \Psi.\end{equation}  Combining this 2-morphism with the 2-morphism (\ref{comp 2 morphism}), we obtain the desired 2-morphism
\begin{equation} -\pounds_{\xi}[\A]\toocong \A^0_{\ul{i\R}_M}\circ F_{\tx}.\end{equation}  If we denote this 2-morphism by $\Theta_{\cx}$, it is straightforward to check that the assignment $D:(S,\mu_S)\mapsto (F_{\tx},\Theta_{\cx})$ is functorial.
%\begin{lemma} $\Psi:\textrm{B}(i\Omega^1_M)\to \textrm{B}(i\Omega^1_M)$ intertwines $\pounds_{\xi}:i\Omega^1_M\to i\Omega^1_M$.
%\end{lemma}

\section{Appendix}  In this section we recall the precise definition of the 2-category of stacks over a manifold $M$.  We then use these definitions to prove theorems (\ref{cech equiv}) and (\ref{cech equiv.  conn}).
\subsection{Gerbes as sheaves of groupoids}

\begin{definition}\label{def groupoid presheaf} Let $M$ be a smooth manifold.  A \emph{presheaf of groupoids} $\CC$ over $M$ consists of the following.
\begin{enumerate}  \item For every open set $U\subset M$, a groupoid $\CC(U)$.
\item For every inclusion of open sets $i:V\to U$, a functor $i^*:\CC(U)\to \CC(V)$. \item For every sequence of inclusions of open sets
\begin{equation} \xymatrix{W \ar[r]^j &  V \ar[r]^i &  U } \end{equation} 
 a natural transformation $\alpha_{i,j}:j^*i^*\Rightarrow (ij)^*,$ such that for every sequence of inclusions 
\begin{equation} \xymatrix{ T \ar[r]^k & W \ar[r]^j & V \ar[r]^i & U} 
 \end{equation} 
the diagram \begin{equation}\label{presheaf coh} \xymatrix@C15mm{ k^*j^*i^* \ar@{=>}[r]^{1_{k^*}*\alpha_{i,j}} \ar@{=>}[d]_{\alpha_{j,k}*1_{i^*}} & k^*(ij)^* \ar@{=>}[d]^{1_{k^*}\alpha_{ij,k}} \\ (jk)^*i^* \ar@{=>}[r]_{\alpha_{i,jk}} & (ijk)^*}  \end{equation} commutes.
\end{enumerate}
\end{definition}
\begin{notation} The notation used in diagram (\ref{presheaf coh}) is the following: given categories $A,B,C$, functors $F,G:A\to B$, $H,I:B\to C$ and natural transformations $\alpha:F\Rightarrow G$, $\beta:H\Rightarrow I$, we denote by $\beta*\alpha:HF\Rightarrow IG$  the horizontal composition of $\beta$ with $\alpha$.
\end{notation}

\begin{notation} When we wish to be more explicit, we will sometimes label the functors $i^*$ and the natural transformations $\alpha_{i,j}$ by $\CC$; thus a presheaf $\CC$ consists of a triple $\{\{\CC(U)\},\{i^*_{\CC}\},\{\alpha_{\CC,i,j}\}\}$ subject to the conditions given in (\ref{def groupoid presheaf}).
\end{notation}
\begin{rem} Let $\textbf{Open}(M)$ denote the category whose objects are open subsets of $M$, and whose morphisms are inclusions. Then we can state definition (\ref{def groupoid presheaf}) more succinctly by saying that a presheaf of groupoids over $M$ is a pseudo-functor from $\textbf{Open}(M)^{op}$ to the 2-category $\textbf{Gpd}$ of groupoids, where the former is thought of as a 2-category with only identity 2-morphisms (for the relevant definitions see \cite{Bo}).
\end{rem} 

Given an open subset $U\subset M$ and objects $P,Q\in \CC(U)$, it is an easy consequence of definition (\ref{def groupoid presheaf}) that there is a presheaf (of sets) $\ul{\Hom}(P,Q)$ over $U$ which assigns to each inclusion $i:V\hookrightarrow U$ the set $\ul{\Hom}(i^*P,i^*Q)$.  The presheaf$\CC$ is said to be a \emph{prestack} if $\ul{\Hom}(P,Q)$ is a sheaf for all open sets $U$ and all objects $P$ and $Q$.   We are interested in prestacks that satisfy an additional gluing property.  Let $\{U_i\}$ be an open cover of an open subset $V\subset M$, and denote the n-fold intersection $U_{i_1}\cap\cdots\cap U_{i_n}$ by $U_{i_1\cdots i_n}$.  Let $\{Q_i\}\in \CC(U_i)$ and $s_{ij}:Q_i|_{U_{ij}}\to Q|_{U_{ij}}$.  Note that on triple intersection $U_{ijk}$, we have two morphisms from $Q_i|_{U_{ijk}}\to Q_j|_{U_{ijk}}$.  On the one hand, we can compose 
\begin{equation} \xymatrix{ Q_i|_{U_{ijk}} \ar[r]^{\cong} & (Q_i|_{U_{ik}})|_{U_{ijk}} \ar[r]^{s_{ik}|_{U_{ijk}}} &  (Q_k|_{U_{ik}})|_{U_{ijk}}  \ar[r]^{\cong} & Q_k|_{U_{ijk}},}\end{equation} where the first an last isomorphisms are constructed using the natural transformations $\alpha$ described in definition (\ref{def groupoid presheaf}). Similarly, we can first use $s_{ij}$ to obtain a morphism from $Q_i|_{U_{ijk}}$ to $Q_j|_{U_{ijk}}$, and then compose with the morphism from $Q_j|_{U_{ijk}}$ to $Q_k|_{U_{ijk}}$ determined by $s_{jk}$.  
We say that the collection $\{Q_i\}$ $\{s_{ij}\}$ satisfies the \emph{descent condition} if these two morphisms are equal for each $i,j,k$.  From now on, for notational simplicity we suppress the restriction maps and natural transformations (see remark 1.2 in \cite{He}) and simply write the descent condition as 
\begin{equation}\label{descent condition objects} s_{jk}\circ s_{ij}=s_{ik}.\end{equation}  Similarly, given another collection $\{\{Q_i'\}\{s'_{ij}\}\}$ satisfying the descent condition, we say that a collection of morphisms $\{\psi_i:Q_i\to Q'_i\}$ satisfy the descent condition if we have 
\begin{equation} \label{descent condition morphisms}\psi_js_{ij}=s'_{ij}\psi_i \end{equation} on double overlaps.  Overall we obtain a \emph{descent category} $\textrm{Desc}(\CC,\{U_i\})$.  It is not hard to construct a restriction functor from $\CC(U)\to \textrm{Desc}(\CC,\{U_i\})$ using the structure described in definition (\ref{def groupoid presheaf}).
\begin{definition} A prestack $\CC$ is a \emph{stack} if for each open set $V\subset M$ and each open cover $\{U_i\}$ of $V$, the restriction functor $\CC(V)\to \textrm{Desc}(\CC,\{U_i\})$ is an equivalence of categories.
\end{definition}

\begin{definition}\label{stack morphism} Let $\CC$ and $\CC'$ be presheaves of groupoids over $M$.  A \emph{1-morphism} $\Phi:\CC\to \CC'$ consists of the following.
\begin{enumerate} \item For every open set $U\subset M$, a functor $\Phi_U:\CC(U)\to \CC'(U)$. 
\item For every inclusion of open sets $i:V\to U$, a natural transformation 
\begin{equation} \label{morph nat tran}\Phi_i:i^*_{\CC'}\Phi_U\Rightarrow\Phi_V i^*_{\CC}, \end{equation} such that for every sequence of inclusions
\begin{equation} \xymatrix{ W\ar[r]^j & V \ar[r]^i &  U }\end{equation} the diagram 
\begin{equation} \label{mor nat tran ass} \xymatrix@C15mm{ j_{\CC'}^*i_{\CC'}^*\Phi_U \ar@{=>}[r]^{1_{j^*_{\CC'}}*\Phi_i} \ar@{=>}[d]_{\alpha_{\CC',i,j}*1_{\Phi_U}} & j^*_{\CC'}\Phi_Vi^*_{\CC} \ar@{=>}[r]^{\Phi_{j}*1_{i^*_{\CC}}} & \Phi_W j^*_{\CC}i^*_{\CC} \ar@{=>}[d]^{1_{\Phi_W}*\alpha_{\CC,i,j}} \\ (ij)^*_{\CC'}\Phi_U \ar@{=>}[rr]_{\Phi_{ij}} & & \Phi_W(ij)^*_{\CC} } \end{equation} commutes.
\end{enumerate}
\end{definition}
\begin{rem} In the language of 2-categories, definition (\ref{stack morphism}) says that a 1-morphism from $\CC$ to $\CC'$ is a pseudo-natural transformation between the pseudo-functors corresponding to $\CC$ and $\CC'$(again, see \cite{Bo}).
\end{rem}
\begin{definition} \label{2-morphism} Let $\CC$, $\CC'$ be presheaves of groupoids over $M$, and let $\Phi,\Psi:\CC\to \CC'$ be 1-morphisms.  A \emph{2-morphism} $\tau:\Phi\Rightarrow \Psi$ is a collection of natural transformations $\tau(U):\Phi_U\Rightarrow \Psi_U$ for every open set $U\subset M$, such that for each inclusion of open sets $i:V\hookrightarrow U$ the diagram
\begin{equation}\label{2-morphisms ass} \xymatrix@C15mm{ i^*_{\CC'}\Phi_U \ar@{=>}[d]_{1_{i^*_{\CC'}}*\tau(U)} \ar@{=>}[r]^{\Phi_i} & \Phi_V i^*_{\CC} \ar@{=>}[d]^{\tau(V)*1_{i^*_{\CC}}} \\ i^*_{\CC'}\Psi_U \ar@{=>}[r]_{\Psi_i} & \Psi_V i^*_{\CC} }\end{equation} commutes.
\end{definition}

\begin{rem} Again in the language of 2-categories, a 2-morphism from $\Phi$ to $\Psi$ is a modification of pseudo-natural transformations.\cite{Bo}
\end{rem}

\subsection{Proofs of theorems (\ref{cech equiv}) and (\ref{cech equiv.  conn}).}  

\textit{Proof of (\ref{cech equiv})}:  It will be more convenient to construct equivalences 
\begin{equation} \Gamma:\L_{g_{ijk}}(\xi)\to \tilde{\L}_{\CC}(\xi)\end{equation} for each vector field $\xi$,  where we recall from appendix A that 
\begin{equation}\tilde{\L}_{\CC}(\xi)=\ul{\Hom}_{\iota_{\xi}d\log}(\CC^{op},\textrm{B}(\ul{i\R}_M))\end{equation} is the category of contravariant functors from $\CC$ to $\textrm{B}(\ul{i\R}_M)$ intertwining $\iota_{\xi}d\log$.  We may then use the canonical equivalence between $\tilde{\L}_{\CC}(\xi)$ and $\L_{\CC}(\xi)$.

 Given an object $\{f^{\tx}_{ij}\}\in \L_{g_{ijk}}(\xi)$, let us first construct an object $F_{\tx}=\Gamma(\{f^{\cx}_{ij}\})\in \tilde{\L}_{\CC}(\xi)$.   Given an open set $V\subset M$ and $P\in \CC(V)$, let $V_i=V\cap U_i$,  $P_i=P|_{V_i}$, $E_i(P)=\ul{\Hom}(P_i,Q_i|_{V_i})$, and
\begin{equation} F_i(P)=\iota_{\xi}d\log[E_i(P)].\end{equation}   Using the natural isomorphism $P_i|_{V_i\cap V_j} \tocong P_j|_{V_i\cap V_j}$ and the morphisms $s_{ij}:Q_i\to Q_j$ we obtain isomorphisms\footnote{for notational simplicity, from now one we will supress writing restriction maps explicitly.}
\begin{equation} (s_{ij})_*:E_i(P)\to E_j(P). \end{equation}  If we then define 
\begin{equation} \zeta_{ij}=\iota_{\xi}d\log[(s_{ij})_*]-f^{\tx}_{ij}:F_i(P)\to F_j(P),\end{equation}  it follows from (\ref{cocycle definition}) and (\ref{lift delta}) that the the morphisms $\{\zeta_{ij}\}$ satisfy the descent condition, and we therefore define $F_{\tx}(P)\in \tor_{\ul{i\R}_V}$ to be the torsor obtained by gluing.  Similarly, given a morphism $\psi: P\to P'$ in $\CC(U)$, the morphisms
\begin{equation} \iota_{\xi}d\log[\psi^*]:F_i(P')\to F_i(P)\end{equation} glue to give a morphism $F_{\tx}(\psi):F_{\tx}(P')\to F_{\tx}(P)$, and for each $\T$-valued function $g$ we have $F_{\tx}(\psi\cdot g)=F_{\tx}(\psi)+\iota_{\xi}d\log(g)$.  It is then straightforward to construct the restriction natural isomorphisms $F_{\tx,i}$ described as part of definition (\ref{stack morphism}).  Thus, given $\{f^{\tx}_{ij}\}\in \L_{g_{ijk}}(\xi)$, we have produced a lift $F_{\tx}=\Gamma(\{f^{\tx}_{ij}\})\in \tilde{\L}_{\CC}(\xi)$.

Next, given  an isomorphism $\{u_i\}$ from $\{f^{\tx}_{ij}\}$ to $\{f^{\tx'}_{ij}\}$ in $\L_{g_{ijk}}$, we wish to produce an isomorphism $\Gamma(\{u_i\}):F_{\tx}\tocong F_{\tx'}$.  For each $i$, let $(\tau_P)_i$ be the automorphism of the $\ul{i\R}_{V_i}$-torsor $\iota_{\xi}d\log[E_i(P)]$ corresponding to the function $u_i|_{V_i}.$  It then follows from equation (\ref{cech morphism}) that the diagram 
\begin{equation} \xymatrix{ \iota_{\xi}d\log[E_i(P)] \ar[r]^{\zeta_{ij}} \ar[d]_{(\tau_P)_i} & \iota_{\xi}d\log[E_j(P)] \ar[d]^{(\tau_P)_j} \\ \iota_{\xi}d\log[E_i(P)] \ar[r]_{\zeta'_{ij}} & \iota_{\xi}d\log[E_j(P)] } \end{equation} commutes, so that the morphisms $(\tau_P)_i$ satisfy the descent condition (\ref{descent condition objects}) and thus glue to define an isomorphism $(\tau)_P:F_{\tx}(P)\tocong F_{\tx'}(P)$.  It is easily checked that this isomorphism is natural in $P$, so that the condition (\ref{2-morphisms ass}) is satisfied, and that $\Gamma$ is functorial. 

To verify that $\Gamma$ is an equivalence of categories, we must check that it is both essentially surjective and fully faithful.  Given $F_{\tx}\in \L_{\CC}(\xi)$, choose local trivializations $\{r_i\}$ with  corresponding Cech data idefined by 
\begin{equation} F_{\tx}(s_{ij})(r_j)=r_i+f^{\tx}_{ij}.\end{equation} Let $F'_{\tx}=\Gamma(\{f^{\tx}_{ij}\})$.    For each object $P\in \CC(U)$, we wish to define isomorphisms
\begin{equation}  \tau_{P,i}:F_{\tx}(P)\to F_i(P) \end{equation} such that on overlaps $U_{ij}$ we have 
\begin{equation}\label{triangle} \xymatrix{ F_{\tx}(P) \ar[dr]_{\tau_{P,j}} \ar[r]^{\tau_{P,i}} & F_i(P) \ar[d]^{\zeta_{ij}}  \\  & F_j(P).}\end{equation}  To do so, for each $i$ choose $\psi_i:P\to Q_i$, and let $\tau_{P,i}$ be the unique morphism
\begin{equation} F_{\tx}(P)\to F_i(P)=\iota_{\xi}d\log[\ul{\Hom}(P,Q_i)] \end{equation} taking 
\begin{equation} F_{\tx}(\psi)(r_i)\mapsto \iota_{\xi}d\log[\psi].\end{equation}  To see that this is independent of the choice of $\psi$, a simple computation shows that for each $\T$-valued function $h$ we have 
\begin{equation} \tau_{P,i}(F_{\tx}(\psi\cdot h)(r_i))= \iota_{\xi}d\log[\psi\cdot h].\end{equation} Similarly, it is easily checked that diagram (\ref{triangle}) commutes.  This shows essentially surjectivity.  Since any two objects in 
 $\L_{g_{ijk}}$ are isomorphic, to show that $\Gamma$ is fully faithful it is sufficient to check that for every object $\{f^{\tx}_{ij}\}\in \L_{g_{ijk}}$ that $\Gamma$ induces a bijection from $\Aut(\{f^{\tx}_{ij}\})$ to $\Aut(\Gamma(\{f^{\tx}_{ij}\})$.  Let $\tau$ be an automorphism of $F_{\tx}=\Gamma(\{f^{\tx}_{ij}\})$.  Then for each object $P\in \CC(U)$, $\tau_P$ corresponds to a function $f_P:U\to i\R$ via the isomorphism $\alpha_P$ in definition (\ref{def gerbe}).  Furthermore, given another object $Q\in \CC(V)$, definition (\ref{stack morphism}) implies that $f_P$ and $f_Q$ agree on their common domain, and thus we obtain a global function $f_{\tau}:M\to i\R$; conversely, given such a function, we can construct an automorphism of $F_{\tx}$.  Finally, by construction it is clear that $\Gamma:\Aut(\{f^{\tx}_{ij}\})\to \Aut(F_{\tx})$ is consistent with the identification of both groups with $C^{\infty}_M(i\R)$, and is therefore a bijection.

\textit{Proof of (\ref{cech equiv.  conn}):}  We will use the same notation as that in the proof of (\ref{cech equiv}).  Given $(\xi\{f^{\cx}_{ij}\},\{a_i\})\in \L_{(g_{ijk},A_{ij})}$, let $F_{\cx}=\Gamma((\xi,\{f^{\cx}_{ij}\})$ be the (non-connective lift) constructed in the proof of (\ref{cech equiv})) from $(\xi,\{f^{\cx}_{ij}\})$  In order to extend $F_{\cx}$ to a connective lift, for each $P\in \CC(V)$ and each $\mu\in \A(P)$, we must construct a connection $\Theta_{\tx}(\mu)$ on $F_{\cx}(P)$.  Recall that $F_{\cx}(P)$ is constructed by gluing the $\ul{i\R}_{V_i}$-torsors 
\begin{equation} F_i(P)=\iota_{\xi}d\log[\ul{\Hom}(P,Q_i)] \end{equation} via maps $\zeta_{ij}:F_i(P)\to F_j(P)$.  Thus to specify a connection on $F_{\tx}(P)$, we must specify connections $\nu_i$ on $F_i(P)$ such that on overlaps we have $\nu_j=(\zeta_{ij})_*\nu_i$.  Using lemma (\ref{conn str diff}), we obtain  a connection $\mu_i-\mu_P$ on $\ul{\Hom}(Q_i,P)$, and by lemma (\ref{triv conn lift})  we may define a connection $\nu_i$  on $F_i(P)$ as 
\begin{equation} \nu_i=\pounds_{\xi}(\mu_i-\mu_P)+a_i.\end{equation}  We then have 
\begin{align} (\zeta_{ij})_*(\nu_i) & = (\iota_{\xi}d\log[(s_{ij})_*-f^{\cx}_{ij})_*(\pounds_{\xi}[\mu_i-\mu_P]+a_i) \\  = & \A^0_{\ul{i\R}}(\iota_{\xi}d\log[(s_{ij})_*])\pounds_{\xi}(\mu_i-\mu_P)+df^{\cx}_{ij}+a_i \notag \\
= & \pounds_{\xi}((s_{ij})_*(\mu_i)-\mu_P)+df^{\cx}_{ij}+a_i \notag \\
= & \pounds_{\xi}[\mu_j-A_{ij}-\mu_P]+df^{\cx}_{ij}+a_i \notag \\ 
= &  \pounds_{\xi}(\mu_j-\mu_P)+a_j-(a_j-a_i)+df^{\cx}_{ij}-\pounds_{\xi}A_{ij} \notag \\= & \nu_j.\notag\end{align}  We then let $\Theta_{\cx}(\mu)$ be the connection obtained by gluing; by construction we have $\Theta_{\cx}(\mu+\alpha)=\Theta_{\cx}(\mu)-\pounds_{\xi}\alpha$.  It is straightforward to check that $\Theta_{\cx}$ is compatible with restrictions and is suitably natural in $P$.

Next, suppose are given an isomorphism $\{u_i\}$ from $(\{f^{\cx}_{ij}\},\{a^{\cx}_i\})$ to $(\{f^{\cx'}_{ij}\},\{a^{\cx'}_i\})$.  Recall from the proof of (\ref{cech equiv}) that we obtain a morphism $\Gamma(\{u_i\}):F_{\tx}(P)\to F_{\tx'}(P)$ by letting each $u_i$ act as an automorphism of $F_i(P)$.  The condition $a_i^{\cx'}=a_i^{\cx}+du_i$ implies that $(u_i)_*\nu_i=\nu_i'$ for each $i$.  Therefore $\Gamma(\{u_i\})_*\Theta_{\cx}(\mu)=\Theta_{\cx'}(\mu)$, so that $\Gamma(\{u_i\})$ is compatible with the connections and thus defines a connective equivalence.  Mimicking the arguments used in the proof of (ref{cech equiv}) above, it is then easily checked that $\Gamma$ is both essentially surjective and fully faithful.


\begin{thebibliography}{99}
	%\bibitem[AMM]{AMM}
	%A. Alekseev, A. Malkin, E. Meinrenken, \emph{Lie group valued moment maps}, J. Differential Geom. 48(1998), no. 3 445-495 [arxiv:dg-ga/9707021v1]

\bibitem{BC}
		J. Baez, A. Crans, \emph{Higher Dimensional Algebra VI: Lie 2-Algebras}, Theory Appl. Categ. 12 (2004), 492-538,  [arXiv:math.QA/0307263v6]
		
\bibitem{BCSS}
		J. Baez, A. Crans, D. Stevenson, U. Schreiber, \emph{From loop groups to 2-groups}, Homology, Homotopy Appl. 9(2007), no.2, 101-135 [arxiv:math.QA/0504123v2]
		
\bibitem{BL}
		J. Baez, A. Lauda, \emph{Higher Dimensional Algebra V: 2-Groups}, Theory and Applications of Categories 12 (2004), 423-491. [arXiv:math/0307200v3]		
\bibitem{BR}
		J. Baez, C. Rogers \emph{Categorified Symplectic Geometry and the String Lie 2-Algebra} [arxiv:math-ph/0901.4721v1]
		
\bibitem{Bo}
		Francis Borceux, \emph{Handbook of Categorical Algebra I}, Cambridge University Press, 1994.


\bibitem{BCh}
		P. Bressler, A. Chervov, \emph{Courant Algebroids}, J. Math. Sci. (N.Y.) 128(2005), no. 4 3030-3053, [arxiv:hep-th/0212195v1]
	
\bibitem{Br1}
		Jean-Luc Brylinski, \emph{Loop Spaces, Characteristic Classes, and Geometric Quantization}, Birkhauser, 1993.
	
\bibitem{Br2}
		Jean-Luc Brylinski, \emph{Gerbes on Complex Reductive Lie Groups}, (2000), [arxiv:dg/0002158v1]

\bibitem{BCG}
		H. Bursztyn, G. Calvacanti, M. Gualtieri, \emph{Reduction of Courant algebroids and generalized complex structures}, Adv. Math. 211(2007), no. 2 726-765, 		 [arxiv:math.DG/0509640v3]	
	
\bibitem{C}
		T.J. Courant, \emph{Dirac Manifolds}, Trans. A.M.S. 319 (1990), 631-661
	%\bibitem[FHT]{FHT1} D. Freed, M. Hopkins, C. Teleman \emph{Loop Groups and Twisted K-Theory I} [arXiv:math.AT/0711.1906]
		
\bibitem{G1} 
		K. Gomi, \emph{Equivariant smooth Deligne cohomology} (2003), [arxiv:math/03073v1]
\bibitem{G2} 
		K. Gomi, \emph{Relationship between equivariant gerbes and gerbes over the quotient} (2003) [arxiv:math/030832v1] 

\bibitem{Gu} M. Gualtieri, \emph{Generalized Complex Geometry}, Oxford University DPhil thesis, [arXiv:math.DG/0401221].

	
\bibitem{He} Heinloth, \emph{Some notes on differentiable stacks}, Available at [http://www.uni-due.de/~mat903/preprints/heinloth.pdf]

\bibitem{H}N. Hitchin, \emph{Brackets, forms and invariant functionals}, Asian J. Math., 10(3):541-560, 2006.

\bibitem{Mein} E. Meinrenken, \emph{The basic gerbe over a compact simple Lie group} [arxiv:math/0209194v3]
	
\bibitem{M} I. Moerdijk, \emph{Introduction to the language of stacks and gerbes}, available at [arxiv:math/0212266v1]

\bibitem{Ra} S. Raskin, \emph{D-modules on stacks-I: tangent complex and cotangent space}, notes for graduate seminar on geometric representation theory, fall 2009, Harvard University, led by D. Gaitsgory.  Available at [www.math.harvard.edu/$\sim$ gaitsgde/grad\_2009/]

\bibitem{R1} 
	C. Rogers, \emph{Courant algebroids from categorified symplectic geometry} (2009), [arxiv:1001.0040v2]
\bibitem{R2}
	C. Rogers, \emph{2-plectic geometry, Courant algebroids, and categorified geometric prequantization.} (2010) [arxiv:math/0307373v1]
	
	\bibitem{RW} D. Roytenberg, A. Weinstein, \emph{Courant Algebroids and Strongly Homotopy Lie Algebras}, Lett. Math. Phys. 46 (1998), no.1, 81-93, [arXiv:math.DG/19802118].
	\bibitem{S} G. Segal, \emph{Classifying Spaces and Spectral Sequences}, Inst. Hautes Etudes Sci. Publ. Math 34 (1968), 105-112
	\bibitem{SW} \v{P}. Severa, A. Weinstein.  \emph{Poisson geometry with a 3-form background}, Prog. Theor. Phys. Suppl., 144:145-154, 2001
	


\end{thebibliography}
\end{document}